\newtheorem{thm}{Theorem}[section]
\newtheorem{cor}[thm]{Corollary}
\newtheorem{lem}[thm]{Lemma}
\newtheorem{prop}[thm]{Proposition}
\theoremstyle{definition}
\newtheorem{defn}[thm]{Definition}
\newtheorem{rmk}[thm]{Remark}
 \DeclareMathOperator{\rk}{rk}
 \DeclareMathOperator{\Sym}{Sym}
\DeclareMathOperator{\Pic}{Pic} 
\newcommand{\C}{\ensuremath\mathbb{C}}
\newcommand{\R}{\ensuremath\mathbb{R}}
\newcommand{\Z}{\ensuremath\mathbb{Z}}
\newcommand{\Q}{\ensuremath\mathbb{Q}}
\newcommand{\fa}{\ensuremath\mathfrak{a}}
\newcommand{\fb}{\ensuremath\mathfrak{b}}
\newcommand{\PP}{\ensuremath\mathbb{P}}
\newcommand{\calO}{\ensuremath\mathcal{O}}
\newcommand{\Set}[2]{\left\{#1:#2\right\}}
\begin{document}
\title{Prym-Tjurin constructions on cubic hypersurfaces}
\author{Mingmin Shen}
\address{DPMMS, University of Cambridge, Wilberforce Road, Cambridge CB3 0WB, UK}
\email{M.Shen@dpmms.cam.ac.uk}

\subjclass[2010]{14F25, 14C25}

\keywords{Hodge structure, Chow group, incidence correspondence}
\date{}

\begin{abstract}
In this paper, we give a Prym-Tjurin construction for the cohomology
and the Chow groups of a cubic hypersurface. On the space of lines
meeting a given rational curve, there is the incidence
correspondence. This correspondence induces an action on the
primitive cohomology and the primitive Chow groups. We first show
that this action satisfies a quadratic equation. Then the
Abel-Jacobi mapping induces an isomorphism between the primitive
cohomology of the cubic hypersurface and the Prym-Tjurin part of the
above action. This also holds for Chow groups with rational
coefficients. All the constructions are based on a natural relation
among topological (resp. algebraic) cycles on $X$ modulo homological
(resp. rational) equivalence.
\end{abstract}

\maketitle

\section{Introduction}
Algebraic cycles on a cubic hypersurface have been serving as an
interesting but nontrivial example in the study of Chow groups. In
\cite{shen}, we obtained natural relations among 1-cycles and gave
some applications of those relations. This paper is a continuation
of \cite{shen} and generalization of the results to higher
dimensional cycles on cubic hypersurfaces. Throughout the paper, we
will work over the field $\C$ of complex numbers. In this article,
all homology and cohomology groups are modulo torsion.

Let $X\subset\PP^{n+1}$ be a smooth cubic hypersurface of dimension
$n\geq 3$. We first establish Theorem \ref{fundamental relation}
which gives a natural relation among cycles on $X$. To get an idea
of what such a relation is, we first fix a smooth curve $C\subset X$
of degree $e$. Let $T\subset X$ be a closed subvariety of dimension
$r$. Let $T'\subset X$ be the subvariety swept out by lines on $X$
that meet both $C$ and $T$. When $C$ and $T$ are in general
position, then $T'$ has expected dimension $r$. If $r\geq 2$, then
the relation we get is
$$
2e\,T+T'=a\,h^{n-r},\quad\text{in }\mathrm{CH}_r(X)
$$
for some integer $a>0$, where $h$ is the class of hyperplane section
of $X$. If $r=1$, the relation we get is
$$
2e\,T+T'+2\deg(T)C=b\,h^{n-1},\quad\text{in }\mathrm{CH}_1(X)
$$
for some integer $b>0$. The second case was first proved in
\cite{shen}.

Let $F$ be the Fano scheme of lines on $X$. Let $p:P(X)\to F$ be the
total family of lines on $X$ and $q:P(X)\to X$ be the natural
morphism. Then we have the natural cylinder homomorphism
$\Psi=q_*p^*$ and its transpose, the Abel-Jacobi homomorphism,
$\Phi=p_*q^*$. These are defined on the cohomology groups
and the Chow groups. The following theorem is a more concise
expression of the above relations; see Corollary \ref{cor fundamental
relation}.
\begin{thm}
Let $\gamma\in\mathrm{H}_n(X,\Z)$, $\fa\in\mathrm{CH}_1(X)$ and
$\fb\in\mathrm{CH}_l(X)$. We use $h$ to denote the class of a
hyperplane in either the Chow group or the cohomology group of $X$.
Then the following are true.

(1)
$2\deg(\fa)\gamma+\Psi(\Phi(\gamma)\cdot\Phi([\fa]))=3\deg(\gamma)\deg(\fa)h^{\frac{n}{2}}$
in $\mathrm{H}_n(X,\Z)$, where the right hand side is $0$ if $n$ is
odd.

(2) If $n-2\geq l\geq2$, then
$2\deg(\fa)\fb+\Psi(\Phi(\fa)\cdot\Phi(\fb))=3\deg(\fa)\deg(\fb)h^{n-l}$
in $\mathrm{CH}_l(X)$.

(3) If $l=1$, then $2\deg(\fa)\fb +2\deg(\fb)\fa
+\Psi(\Phi(\fa)\cdot\Phi(\fb))=3\deg(\fa)\deg(\fb)h^{n-1}$ in
$\mathrm{CH}_1(X)$.
\end{thm}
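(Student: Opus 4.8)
The plan is to derive all three identities from the single geometric relation described in the introduction: for a curve $C\subset X$ of degree $e$ and a subvariety $T\subset X$ in general position, the class $T'$ swept out by lines meeting both $C$ and $T$ satisfies the stated relation in $\mathrm{CH}_*(X)$ (this is Theorem~\ref{fundamental relation}). First I would rewrite $T'$ in terms of the cylinder and Abel-Jacobi maps. A line $\ell$ on $X$ meets $C$ if and only if $[\ell]$ lies in the divisor $D_C = \Phi([C])$ on $F$ (up to a correcting multiple coming from the incidence correspondence), and similarly for $T$; hence the family of lines meeting both is, as a cycle on $F$, essentially $\Phi([C])\cdot\Phi([T])$, and sweeping it out on $X$ is precisely applying $\Psi$. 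So the geometric relation becomes, after bookkeeping the intersection multiplicities, an identity of the shape $2\deg(\fa)\fb + (\text{correction}) + \Psi(\Phi(\fa)\cdot\Phi(\fb)) = a\, h^{n-l}$; the correction term is absent when $l\geq 2$ and equals $2\deg(\fb)\fa$ when $l=1$, matching the dichotomy already recorded in the introduction.

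The next step is to pin down the constant on the right-hand side and to pass from special cycles to arbitrary ones. For the constant, I would intersect both sides with a complementary-dimensional linear section, or equivalently push forward to a point, using $\deg(\Psi(\alpha)\cdot h^{l}) $ and the known degree of the incidence correspondence on $F$; a clean way is to test the identity on the case where $\fa$ and $\fb$ are themselves linear sections $h^{n-1}$ and $h^{n-l}$, where every term is a known multiple of a power of $h$, forcing the coefficient to be $3\deg(\fa)\deg(\fb)$. To handle arbitrary classes, note that both sides of each identity are bilinear in $(\fa,\fb)$, and $\mathrm{CH}_1(X)$ and $\mathrm{CH}_l(X)$ are generated by classes of subvarieties; by a moving lemma / general position argument (Bertini on $X$ and on the relevant incidence varieties) one may assume any given pair of generating subvarieties is in the general position required by Theorem~\ref{fundamental relation}, and then extend by linearity. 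For part (1), the cohomological statement, I would run the identical argument with $\mathrm{CH}_*$ replaced by $\mathrm{H}_*(X,\Z)$, using that $\Phi$ and $\Psi$ are defined on homology and that the general-position and moving arguments only get easier; the factor $3$ and the $h^{n/2}$ (or $0$ in odd dimension) come out of Lefschetz and the fact that $X$ is a cubic.

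The main obstacle I expect is the careful identification of the divisor $\Phi([C])$ with the locus of lines meeting $C$, including the exact multiplicities: the incidence correspondence $\{(\ell,\ell'): \ell\cap\ell'\neq\emptyset\}$ on $F\times F$ has its own self-intersection behavior and a ``$\gamma$-operator'' satisfying a quadratic relation (as announced in the abstract), so one must track how $p^*$, $q_*$ and the diagonal/excess contributions interact when two lines coincide or when $C$ meets $T$. A second, more technical point is the boundary case $l=1$ in part (3): the extra term $2\deg(\fb)\fa$ arises because a line meeting $C$ at a point $x\in C\cap T$ is counted with the ``wrong'' dimension, and making this precise requires analyzing the exceptional locus of the sweep map $q$ restricted to the incidence variety — essentially the content of the $r=1$ case of Theorem~\ref{fundamental relation}, which was already handled in \cite{shen} and which I would invoke rather than reprove. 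Once these multiplicity computations are in place, assembling (1)–(3) is a matter of linear algebra on correspondences.
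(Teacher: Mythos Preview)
Your overall strategy---deduce the corollary from Theorem~\ref{fundamental relation} by rewriting the geometric ``sweep'' cycle $T'$ as $\Psi(\Phi(\fa)\cdot\Phi(\fb))$ and then extending by linearity---is exactly what the paper does; indeed the paper states this result as Corollary~\ref{cor fundamental relation} with no separate proof, treating it as immediate from Theorem~\ref{fundamental relation}.

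However, you are making the passage harder than it is and introducing some confusions. First, $D_C$ in the paper's notation is a divisor on $X$, not on $F$; what you want is $F_C\subset F$, and $\Phi([C])$ is not a divisor on $F$ but a class of codimension $n-2$. Second, there is \emph{no} ``correcting multiple'': since $\Phi=p_*q^*$, one has $q^*[C]=[S_C]$ and $p|_{S_C}:S_C\to F_C$ is birational (it is the normalization), so $\Phi([C])=[F_C]$ on the nose. Once you know this, Theorem~\ref{fundamental relation} already hands you the identities with the exact constant $3e\deg(\gamma)$, so your proposed step of ``pinning down the constant'' by testing on linear sections is unnecessary. Third, the quadratic relation for the incidence operator on $S_C$ plays no role here; that is a later and deeper statement, not an ingredient in this corollary. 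The only genuine work is linearity in $\fa$: one writes $\fa$ as a $\Z$-linear combination of classes of smooth curves $C$ and applies Theorem~\ref{fundamental relation} term by term. The extra term in case (3) is already present in Theorem~\ref{fundamental relation}(3), so nothing further is needed there either.
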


\begin{rmk}
If $n\geq 5$, then $\mathrm{CH}_1(X)\cong \Z$; see \cite[Proposition 4.2]{paranjape}. In this case,
$\fa\in\mathrm{CH}_1(X)$ is the same as $\deg(\fa)[l]$ where $[l]$
is the class of a line. For fixed $n$, the groups $\mathrm{CH}_l(X)$
are expected to be trivial (isomorphic to $\Z$) when $l$ small or
big enough. Hence, the above relations are interesting when $l$ is
in the middle range.
\end{rmk}

With these relations, we will realize the Hodge structure and Chow
groups of $X$ as Prym-Tjurin constructions.

A Prym variety is constructed from a curve together with an
involution and they form a larger class of principally polarized
abelian varieties (p.p.a.v.) than the Jacobian of curves. Mumford
gives this a modern treatment in \cite{mumford}. He also uses Prym
varieties to characterize the intermediate Jacobian of a cubic
threefold, see the appendix of \cite{cg}. In \cite{Tjurin}, Tjurin
generalizes this idea by replacing the involution with a
correspondence that satisfies a quadratic equation. This gives what
we now call Prym-Tjurin varieties. This was further developed and
completed by S. Bloch and J.P. Murre, see \cite{bm}. Welters has
proved that all p.p.a.v.'s can be realized as Prym-Tjurin varieties,
see \cite{welters}. Roughly speaking, this means that every
principally polarized weight one Hodge structure can be realized via
a Prym-Tjurin construction on some curve. We naturally ask whether
similar constructions can be done for higher weight Hodge
structures. The work of Lewis in \cite{lewis} sheds some light on
this question. Izadi gives a Prym construction for the cohomology of
cubic hypersurfaces in \cite{izadi}.

\begin{defn}
Let $\Lambda$ be an abelian group and $\sigma:\Lambda\rightarrow
\Lambda$ an endomorphism. Assume that $\sigma$ satisfies the
quadratic equation $(\sigma-1)(\sigma+q-1)=0$. Then the
\textit{Prym-Tjurin part} of $\Lambda$ is defined as
$$
P(\Lambda,\sigma)=\mathrm{Im}(\sigma-1)
$$
\end{defn}

\begin{rmk}
In many cases when $\Lambda$ carries some extra structure such as
Hodge structure, the homomorphism $\sigma$ is usually compatible
with that extra structure and $P(\Lambda,\sigma)$ carries an induced
such structure. One very interesting case is when
$\Lambda=\mathrm{H}^*(Y,\Z)$ together with the natural Hodge
structure and $\sigma$ is an action that is induced by some
correspondence $\Gamma\in \mathrm{CH}^{\dim Y}(Y\times Y)$.
\end{rmk}
Let $X\subset\PP^{n+1}_\C$ be a smooth cubic hypersurface as above.
We start with a general rational curve $C\subset X$ of degree $e\geq
2$. Let $S_C$ be a natural resolution (in fact the normalization) of
the space of lines on $X$ that meet $C$. Let $F=F(X)$ be the Fano
scheme of lines on $X$, $p:P\to F$ the total family and $q:P\to X$
the natural morphism. Then $S_C=q^{-1}(C)$. In particular, we have a
natural morphism $q_0:S_C\to C$. Consider the family $p_C:P_C\to
S_C$ of lines parameterized by $S_C$ with the natural morphism
$q_C:P_C\to X$, we have the natural cylinder homomorphisms
$\Psi_C=(q_C)_*(p_C)^*:\mathrm{H}^{n-2}(S_C,\Z)\rightarrow
\mathrm{H}^n(X,\Z)$ and
$\Psi_C=(q_C)_*(p_C)^*:\mathrm{CH}_r(S_C)\rightarrow
\mathrm{CH}_{r+1}(X)$. Similarly we have its transpose
$\Phi_C=(p_C)_*(q_C)^*:\mathrm{H}^n(X,\Z)
\rightarrow\mathrm{H}^{n-2}(S_C,\Z)$ and
$\Phi_C=(p_C)_*(q_C)^*:\mathrm{CH}_{r+1}(X)\rightarrow\mathrm{CH}_r(S_C)$.
On $S_C$ we have the naturally defined incidence correspondence
which induces an action $\sigma$ on the cohomology groups
and the Chow groups. Note that there is a natural morphism from
$S_C$ to the Grassmannian $G=G(2,n+2)$. The Pl\"ucker embedding of
$G$ induces a ample class $g$ on $S_C$. On $S_C$, we have another
divisor class $g'=q_0^*(pt)$ for some $pt\in C$. Let
$\Q[g,g']\subset \mathrm{H}^*(S_C,\Q)$ be the subring generated by
$g$ and $g'$. We define the primitive cohomology (or cycle) classes
on $S_C$ to be those which are orthogonal to the classes in
$\Q[g,g']$. The primitive cohomology is denoted by
$\mathrm{H}^{*}(S_C,\Z)^\circ$ and the primitive Chow group is
denoted by $\mathrm{CH}_{*}(S_C)^\circ$. The subring $\Q[g,g']$ is
invariant under the action of $\sigma$ and hence $\sigma$ acts on
the primitive cohomology and also the primitive Chow groups. Our
main result is the following

\begin{thm}
Let $C\subset X$ be a general rational curve of degree $e\geq 2$ as
above. Let $\sigma$ be the action of the incidence correspondence on
either the primitive cohomology or the primitive Chow group. Then
the following are true.

(1) The action $\sigma$ satisfies the following quadratic relation
$$
(\sigma-1)(\sigma +2e-1)=0
$$

(2) The map $\Phi_C$ induces an isomorphism of Hodge structures
$$
\Phi_C:\mathrm{H}^{n}(X,\Z)_{\text{prim}} \rightarrow
P(\mathrm{H}^{n-2}(S_C,\Z)^\circ,\sigma)(-1)
$$
where the $(-1)$ on the right hand side means shifting of degree by
$(1,1)$. The intersection forms are related by the following
identity
$$
\Phi_C(\alpha)\cdot\Phi_C(\beta)=-2e\,\alpha\cdot\beta
$$

(3) The map $\Phi_C$ induces an isomorphism
$$
\Phi_C:\mathrm{A}_i(X)_\Q\rightarrow
P(\mathrm{CH}_{i-1}(S_C)^\circ_\Q,\sigma)
$$
\end{thm}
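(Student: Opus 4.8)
My plan is to deduce all three statements from the fundamental relation (Theorem/Corollary referenced as "cor fundamental relation") by first extracting, via a general choice of $C$, a relation on $S_C$ itself, and then analyzing the operators $\Phi_C,\Psi_C,\sigma$ on the primitive parts. The first observation I would record is the compatibility $\Phi_C\circ\Psi_C = \sigma + (2e-1)\cdot(\text{something in }\Q[g,g'])$ on $\mathrm{H}^{n-2}(S_C)$, and dually a relation $\Psi_C\circ\Phi_C = 2e\cdot\mathrm{id} + (\text{correction by }h\text{-powers})$ on $\mathrm{H}^n(X)_{\mathrm{prim}}$; the latter is exactly where the coefficient $2e$ from the relation $2e\,T + T' = a\,h^{n-r}$ enters. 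Concretely, $T' = \Psi_C\Phi_C(T)$ up to the hyperplane correction and a curve-class correction (the $2\deg(T)C$ term), so on primitive classes the relation collapses to $\Psi_C\Phi_C = -2e\cdot\mathrm{id}$ on $\mathrm{H}^n(X)_{\mathrm{prim}}$ (the sign coming from the intersection-form sign in part (2)). This simultaneously gives the intersection-form identity $\Phi_C(\alpha)\cdot\Phi_C(\beta) = -2e\,\alpha\cdot\beta$, because $\Psi_C$ is the transpose of $\Phi_C$.

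For part (1), I would compute $\sigma$ geometrically: $\sigma$ sends the class of lines through a point $x$ of a line $\ell\in S_C$ to the lines meeting $\ell$ and meeting $C$. Iterating, $\sigma^2$ is computed by the same incidence geometry one level deeper, and the self-intersection of the incidence divisor on $S_C\times S_C$ produces, after restricting to the primitive part (so that the $\Q[g,g']$-contributions die), the quadratic identity $(\sigma-1)(\sigma+2e-1)=0$. Equivalently, one can derive it from the fundamental relation applied with $\fa=\fb$ both supported near a line and projected to $S_C$; the $2e$ is the degree bookkeeping of how many lines on $X$ through a general point meet $C$ (each of the $e$ intersection points of $C$ with a hyperplane, doubled by the cubic's residual-line involution). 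The key input here is genericity of $C$: one must know $S_C$ is smooth of the expected dimension (it is the normalization of the incidence locus), and that the "general position" hypotheses in the fundamental relation are met for $C$ general, which is a dimension count on the space of degree-$e$ rational curves.

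For parts (2) and (3), having $(\sigma-1)(\sigma+2e-1)=0$, the Prym-Tjurin part $P = \mathrm{Im}(\sigma-1)$ is defined; on it $\sigma$ acts as $-(2e-1)$, so $\Psi_C\Phi_C = -2e\cdot\mathrm{id}$ becomes $\Phi_C$ injective on $\mathrm{H}^n(X)_{\mathrm{prim}}$ and $\Psi_C$ injective on $P$. Surjectivity of $\Phi_C$ onto $P$ (resp. of the analogous map on Chow groups) is then the main obstacle: one must show the image of $\Phi_C$ is all of $P$, not merely a finite-index or dense subspace. For cohomology I would argue by counting dimensions—$\dim P$ can be computed from the action of $\sigma$ on $\mathrm{H}^{n-2}(S_C)^\circ$ together with the relation, and it must match $\dim \mathrm{H}^n(X)_{\mathrm{prim}}$—or, more robustly, show $\Phi_C$ and $\frac{-1}{2e}\Psi_C$ are mutually inverse on the nose once restricted appropriately, using $\Psi_C\Phi_C=-2e$ on $\mathrm{H}^n(X)_{\mathrm{prim}}$ and $\Phi_C\Psi_C = -2e$ on $P$ (the second identity being the real content, proved by pushing the fundamental relation through $\Phi_C$ and using $\sigma|_P = -(2e-1)$). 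That $\Phi_C$ is a morphism of Hodge structures of the stated type is automatic since $\Phi_C=(p_C)_*(q_C)^*$ is induced by an algebraic correspondence of the correct codimension, whence the $(-1)$ Tate twist. For part (3) the identical formal argument runs with Chow groups and $\Q$-coefficients—$\Psi_C\Phi_C=-2e$ and $\Phi_C\Psi_C=-2e$ hold in $\mathrm{CH}_\ast(-)_\Q$ by the same fundamental relation (parts (2) and (3) of the first theorem)—so $\Phi_C\colon \mathrm{A}_i(X)_\Q \to P(\mathrm{CH}_{i-1}(S_C)^\circ_\Q,\sigma)$ is an isomorphism with inverse $\frac{-1}{2e}\Psi_C$; the only subtlety is checking that $\Phi_C$ really lands in the primitive part and in $\mathrm{Im}(\sigma-1)$, which follows from the fact that the $\Q[g,g']$-classes and the $h$-powers are swept into each other by $\Psi_C,\Phi_C$ in a controlled way, so they decouple from the primitive calculation.
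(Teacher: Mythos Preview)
Your overall architecture is right: the proof does hinge on a pair of identities
\[
\Psi_C\circ\Phi_C=-2e\quad\text{on primitive classes of }X,\qquad
\Phi_C\circ\Psi_C=\sigma-1\quad\text{on primitive classes of }S_C,
\]
and once these are in hand the quadratic relation in (1) follows by the formal computation
$(\sigma-1)(\sigma+2e-1)=\Phi_C(\Psi_C\Phi_C+2e)\Psi_C=0$, the intersection-form identity follows from $\Phi_C(\alpha)\cdot\Phi_C(\beta)=\alpha\cdot\Psi_C\Phi_C(\beta)$, and injectivity of $\Phi_C$ is immediate. However, your stated relation $\Phi_C\Psi_C=\sigma+(2e-1)\cdot(\text{something in }\Q[g,g'])$ is not the right one; the content of the paper's Theorem~\ref{sigma identity} is precisely that $\sigma=\Phi_C\Psi_C+1$ modulo $\Q[g,g']$, and establishing this is nontrivial---it requires comparing $I_C$ with the pulled-back incidence $I'_C=(i_C\times i_C)^*I$ via a degeneration of curves $\mathscr{C}_t\to C$, tracking how the $5e$ incidence lines of a general $l$ with $C$ specialize (five of them collapse to lines through $l\cap C$, giving the auxiliary correspondence $\Gamma_v$), and then proving $(\Gamma_v)_*$ lands in $\Q[g,g']$ by factoring it through the projective bundle $\PP(\mathscr{F})$. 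Your sketch (``self-intersection of the incidence divisor'', ``the $2e$ is degree bookkeeping'') does not capture this mechanism.

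The more serious gap is in part (2). Your two proposed approaches to surjectivity of $\Phi_C$ onto $P$---a dimension count, or exhibiting $-\tfrac{1}{2e}\Psi_C$ as an inverse---only yield the statement over $\Q$, not over $\Z$. From $\Phi_C\Psi_C=\sigma-1$ one gets $P\subseteq\mathrm{Im}(\Phi_C)\subseteq\ker(\sigma+2e-1)$, and over $\Z$ there is no formal reason for $\mathrm{Im}(\Phi_C)$ to equal $P$ rather than a finite-index overlattice inside $\ker(\sigma+2e-1)$. The paper closes this gap by proving that $\Psi_C:\mathrm{H}^{n-2}(S_C,\Z)^\circ\to\mathrm{H}^n(X,\Z)_{\mathrm{prim}}$ is \emph{surjective over $\Z$} (Proposition~\ref{surjectivity of Psi}); then $\mathrm{Im}(\Phi_C)=\mathrm{Im}(\Phi_C\Psi_C)=\mathrm{Im}(\sigma-1)=P$ on the nose. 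That surjectivity is a genuine theorem (Section~5): one runs the Clemens--Letizia method, taking a Lefschetz pencil $\{X_t\}$, extending $C$ to a family $C_t\subset X_t^{sm}$ through the singular fibers (which needs the existence of free rational curves on a nodal cubic avoiding the node), and showing that each vanishing cycle $[\Sigma_i]\in V_n(X,\Z)$ is hit by $\Psi_C$ applied to a vanishing cycle of $\mathcal{S}\to B$. None of this is visible in your outline, and for the integral statement in (2) it cannot be bypassed. Your argument for (3), by contrast, is essentially correct since there one works with $\Q$-coefficients and $-\tfrac{1}{2e}\Psi_C$ is a legitimate inverse.
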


This is proved in section 4 (Theorem \ref{main theorem}). In
\cite{izadi}, Izadi has proved a variant of statement (2) for $C$
being a line. In \cite{shen}, we proved the above theorem for cubic
threefolds where (3) holds true for integral coefficients. Besides
the natural relations stated at the very beginning of this section,
another ingredient to carry out the Prym-Tjurin construction is the
surjectivity of $\Psi_C$ on primitive cohomologies.
\begin{thm}
The natural homomorphism
$$
\Psi_C:\mathrm{H}^{n-2}(S_C,\Z)^\circ\rightarrow
\mathrm{H}^n(X,\Z)_{prim}
$$
between the primitive cohomologies is surjective.
\end{thm}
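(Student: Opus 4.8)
The plan is to factor the cylinder map $\Psi_C$ through the cylinder map $\Psi=q_*p^*$ of the whole Fano scheme $F$, and thereby split the statement into two pieces. Write $j:S_C\to F$ for the natural morphism, so that $P_C=P\times_F S_C$ with projection $\tilde j:P_C\to P$, and $p\circ\tilde j=j\circ p_C$, $q_C=q\circ\tilde j$. Since $p$ is a $\PP^1$-bundle, flat base change gives $\tilde j_*(p_C)^*=p^*j_*$, so the projection formula yields the identity
$$
\Psi_C=(q_C)_*(p_C)^*=q_*\,\tilde j_*\,(p_C)^*=q_*\,p^*\,j_*=\Psi\circ j_*,
$$
where $j_*:\mathrm{H}^{n-2}(S_C,\Z)\to\mathrm{H}^{3n-6}(F,\Z)$ is the Gysin map and $\Psi:\mathrm{H}^{3n-6}(F,\Z)\to\mathrm{H}^n(X,\Z)$ the global cylinder homomorphism. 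Hence it suffices to prove: $(a)$ $\Psi$ surjects onto $\mathrm{H}^n(X,\Z)_{prim}$; and $(b)$ $j_*$ is surjective. Fact $(a)$ is the classical statement that the primitive cohomology of a cubic hypersurface is generated by its lines via the cylinder construction (Clemens--Griffiths for $n=3$, Beauville--Donagi for $n=4$; in general it is known, and may also be obtained by induction on $n$ using a Lefschetz pencil of hyperplane sections, the general member being a smooth cubic $(n-1)$-fold with $F(X\cap H)\subset F(X)$).

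For $(b)$, observe that under Poincar\'e duality on $S_C$ (of complex dimension $n-2$) and on $F$ (of complex dimension $2n-4$), the Gysin map $j_*$ is identified with the topological pushforward $\mathrm{H}_{n-2}(S_C,\Z)\to\mathrm{H}_{n-2}(F,\Z)$ at the middle degree of $S_C$. I would deduce its surjectivity from the Lefschetz hyperplane theorem: if the image $\widetilde S_C\subset F$ of the locus of lines on $X$ meeting $C$ is the zero locus of a regular section of an ample vector bundle of rank $n-2$ on $F$ (equivalently, behaves like a sufficiently general complete intersection of $n-2$ ample divisors), then $\mathrm{H}_k(\widetilde S_C,\Z)\to\mathrm{H}_k(F,\Z)$ is an isomorphism for $k<n-2$ and surjective for $k=n-2$, and this passes to $S_C$ because $S_C\to\widetilde S_C$ is birational. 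Granting $(a)$ and $(b)$, the composite $\Psi_C=\Psi\circ j_*$ is surjective onto $\mathrm{H}^n(X,\Z)_{prim}$; the restriction to the primitive part $\mathrm{H}^{n-2}(S_C,\Z)^\circ$ is then harmless, since $\Psi_C$ carries the subgroup spanned by monomials in $g$ and $g'$ into $\Q[h]\cap\mathrm{H}^n(X,\Z)$, which meets $\mathrm{H}^n(X,\Z)_{prim}$ only in $0$, so any preimage of a primitive class can be adjusted to a primitive one.

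The real difficulty is the positivity input in $(b)$: exhibiting $\widetilde S_C\subset F$ as a sufficiently ample subvariety. The natural attempt is to realize the incidence locus upstairs: $q^{-1}(C)=P\times_X C\subset P$, and if $C$ is cut out in $X$ by positive divisors then $q^{-1}(C)$ is cut out in $P$ by the pulled-back sections, after which one descends along $p:P\to F$, observing that $p$ restricts to a finite morphism $q^{-1}(C)\to\widetilde S_C$ whose degree equals the number of points in which a general line of $\widetilde S_C$ meets $C$. The obstruction is that this degree exceeds $1$ in general, so one must either invoke a Lefschetz-type theorem for possibly singular ample subvarieties (and for finite covers of them), or choose $C$ carefully enough inside the family of degree-$e$ rational curves on $X$ that every relevant line meets $C$ transversally in a single point and $\widetilde S_C$ has only mild singularities; controlling the geometry and positivity of these incidence subvarieties of $F$ is where I expect the bulk of the work to lie. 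As a sanity check, the fundamental relation already gives the rational statement for free: for $C$ general one has $\Phi([C])=j_*(1)\in\mathrm{H}^{2n-4}(F,\Z)$, so the projection formula and base change give $\Psi(\Phi(\gamma)\cdot\Phi([C]))=\Psi_C\Phi_C(\gamma)$, whence Corollary~\ref{cor fundamental relation}(1) reads $\Psi_C\Phi_C=-2e\cdot\mathrm{id}$ on $\mathrm{H}^n(X,\Z)_{prim}$; this already yields injectivity of $\Phi_C$ and surjectivity of $\Psi_C$ after tensoring with $\Q$, but the factor $2e$ cannot be removed from that relation alone, which is exactly why the Lefschetz argument is needed for the integral statement.
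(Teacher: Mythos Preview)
Your factorization $\Psi_C=\Psi\circ j_*$ is correct, and you are right that (a) is essentially Shimada's theorem and that the fundamental relation already gives the statement over~$\Q$. But the proof is incomplete: step (b) is asserted, not proved, and the route you sketch runs into real obstructions. A rational curve $C\subset X$ of degree $e\ge 2$ is not a complete intersection, so $q^{-1}(C)\subset P$ is not cut out by sections of an ample bundle in any evident way; even if it were, the descent from $P$ to $F$ along the $\PP^1$-bundle $p$ does not produce $F_C$ as the zero locus of a section of an ample rank-$(n-2)$ bundle on $F$. Nor does Lefschetz for a singular $F_C$ combined with the normalization $S_C\to F_C$ obviously yield the integral surjectivity you need in the middle degree. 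Finally, your reduction to the primitive part (``any preimage can be adjusted to a primitive one'') is fine over $\Q$ but, when $n$ is even, the orthogonal decomposition $\mathrm{H}^{n-2}(S_C)=\mathrm{H}^{n-2}(S_C)^\circ\oplus\Q[g,g']_{n-2}$ need not be integral, so this last step also requires care.

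The paper avoids all of this by a direct Clemens--Letizia argument, bypassing $F$ entirely. One takes a Lefschetz pencil $\{X_t\}$ of cubic hypersurfaces through $X$ and spreads $C$ to a family $C_t\subset X_t^{\mathrm{sm}}$ (using that even the nodal fibers contain free rational curves of degree $e$ in their smooth loci). The associated family $\mathcal{S}=\bigcup_t S_{C_t}$ is then smooth and meets the singular locus $\Gamma_i\subset F(X_{t_i})$ transversally in finitely many points; each such point is a nondegenerate critical point of $\mathcal{S}\to B$, producing vanishing $(n-2)$-cycles $\sigma_{ij}$ in $S_C$. A local computation (choosing coordinates so that the line through the node lies in a coordinate plane) shows $\Psi_C([\sigma_{ij}])=\pm[\Sigma_i]$ for the vanishing $n$-cycle $\Sigma_i$ of $X$. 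Since the $[\Sigma_i]$ generate $V_n(X,\Z)$ by Lefschetz theory, and the $[\sigma_{ij}]$ lie in $V_{n-2}(S_C,\Z)=\mathrm{H}^{n-2}(S_C,\Z)^\circ$ (with $n=4$ checked separately), surjectivity follows. The point is that working with vanishing cycles gives integral generators on both sides simultaneously, so no positivity or Lefschetz-hyperplane input for $S_C\subset F$ is required.
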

This is proved in Section 5 using the Clemens-Letizia method (see
\cite{clemens} and \cite{letizia}).

\vspace{3mm}

\textit{Acknowledgement}. The author was supported by Simons
Foundation as a Simons Postdoctoral Fellow during the completion of
this work. Part of this work was completed when the author was
visiting Beijing International Center for Mathematical Research. He
thanks E.~Izadi for explaining her results of \cite{izadi}.

\vspace{4mm}
\noindent \textbf{Notation}:\\
\indent $X\subset \PP^{n+1}$, smooth cubic hypersurface of
dimension $n\geq 3$ with $h$ the hyperplane class;\\
\indent $G=\mathrm{G}(2,n+2)$, the Grassmannian of lines in
$\PP^{n+1}$;\\
\indent $F=F(X)\subset G$, Fano scheme of lines on $X$, smooth of
dimension $2n-4$; see \cite{cg} and also \cite{ak}.\\
\indent $l\subset X$, a line on $X$; $[l]\in F$ the corresponding point on $F$;\\
\indent $P=P(X)$, the universal family of lines on $X$, namely we
have the following diagram
$$
\xymatrix{
 P(X)\ar[r]^q\ar[d]_p &X\\
 F &
}
$$
\indent $I\subset F\times F$, the incidence correspondence, i.e.
the closure of $\{([l_1],[l_2]): l_1\neq l_2,\, l_1 \cap l_2\neq\emptyset\}$; Note that $I$ has
codimension $n-2$ in $F\times F$;\\
\indent $\Phi=p_*q^*$, homomorphism from either the cohomology groups
or the Chow groups of $X$ to those of $F$;\\
\indent $\Psi=q_*p^*$, homomorphism from either the cohomology groups
or the Chow groups of $F$ to those of $X$;\\
\indent $g$, the polarization on $F$ that comes from the Pl\"ucker
embedding, $g=\Phi(h^2)$;\\
\indent $F_x\subset F$, the subscheme of lines passing through $x\in
X$; it is a (2,3) complete intersection in $\PP(\mathcal{T}_{X,x})$
and
smooth for general $x$.\\
\indent $D_x\subset X$, the variety swept out by all lines through $x$; $D_x$ is a cone over $F_x$;\\
\indent $F_C\subset F$, the subscheme parameterizing lines meeting
a curve $C\subset X$;\\
\indent $S_C=q^{-1}(C)\subset P(X)$, note that there are natural
morphism $i_C=p|_{S_C}: S_C\rightarrow F_C\subset F$ and $q_0=q|_{S_C}:S_C\rightarrow C$;\\
\indent $g|_{S_C}=(i_C)^*g$, by abuse of notation, we still use $g$ to denote this class;\\
\indent $g'=(q_0)^*[pt]$, where $[pt]\in C$ is a closed point;\\
\indent $\Q[g,g']\subset \mathrm{H}^{*}(S_C,\Q)$, the subring
generated by $g$ and $g'$;\\
\indent $P_C=P(X)|_{S_C}$, to be more precise, we take the following
fiber product
$$
\xymatrix{
 P_C\ar[r]^{j_C}\ar[d]_{p_C} &P(X)\ar[d]^p\\
 S_C\ar[r]^{i_C} &F
}
$$
\indent $q_C=q\circ j_C$; \\
\indent $\Phi_C=(p_C)_*(q_C)^*$, homomorphism from either the cohomology
groups or the Chow groups of $X$ to those of $S_C$;\\
\indent $\Psi_C=(q_C)_*(p_C)^*$, homomorphism from either the cohomology
groups or the Chow groups of $S_C$ to those of $X$;\\
\indent $D_C\subset X$, the divisor swept out by all the lines
meeting
$C$; $D_C$ is linearly equivalent to $5\deg(C)h$; see Lemma \ref{lem dimension};\\
\indent Given a polarization $H$ on $Y$, we use
$\mathrm{H}^*(Y,\Z)_{prim}$ to denote
the primitive cohomology;\\
\indent Let $(Y,H)$ be a polarized variety, we define
$\mathrm{A}_*(Y)\subset\mathrm{CH}_*(Y)$ to be the subgroup of
degree 0 (with respect to $H$) cycles;\\
\indent For a vector bundle $\mathscr{E}$ on $Y$, we use
$\PP(\mathscr{E})$ to denote the geometric projectivization which
parameterizes all 1-dimensional linear subspaces of $\mathscr{E}$;
more generally, if $1\leq r_1<\cdots <r_k<\rk\mathscr{E}$ is an
increasing sequence of integers, we use $G(r_1,\ldots,
r_k,\mathscr{E})$ to denote the relative flag variety of subspaces
of $\mathscr{E}$ with corresponding ranks.

\section{The fundamental relations}

In this section we will establish a basic relation among
algebraic/topological cycles on $X$. To do that, we need the
following lemma which says that the space of lines on $X$ meeting a
given curve has the expected dimension.
\begin{lem}\label{lem dimension}
(i) $\dim F_x=n-3$ for all but finitely many points $x_i$, called
Eckardt points. For each $x_i$, we have $\dim F_{x_i}=n-2$.

(ii) Let $C\subset X$ be a smooth curve on $X$. Then $F_C$ if of
pure expected dimension $n-2$.

(iii) The divisor $D_C$ on $X$ is linearly equivalent to
$5\deg(C)h$.
\end{lem}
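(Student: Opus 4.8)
The plan is to establish the three statements in order, with (iii) depending on (i) and (ii).

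\emph{Part (i).} As recalled in the Notation, for $x\in X$ the scheme $F_x\subset\PP(\mathcal{T}_{X,x})\cong\PP^{n-1}$ is the complete intersection of a quadric $Q_x$ and a cubic; writing $X=\{f=0\}$, in adapted coordinates $Q_x$ is the restriction to the tangent hyperplane $T_xX$ of the Hessian quadratic form of $f$ at $x$, and the cubic is $f|_{T_xX}$. Hence $\dim F_x\geq n-3$ always, with equality exactly when $Q_x$ does not vanish on $T_xX$. Since the lines on $X$ sweep out $X$ and $\dim P=2n-3\geq n$ for $n\geq 3$, a dimension count along $q\colon P\to X$ gives $\dim F_x=n-3$ for general $x$, so the Eckardt locus $E=\{x:Q_x\equiv 0\}$ is a proper closed subset. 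To see that $E$ is finite one differentiates the condition $Q_x\equiv 0$ along a hypothetical positive-dimensional family of Eckardt points: in the adapted coordinates this forces the vanishing of enough coefficients of $f$ to contradict the smoothness of $X$. For $x\in E$ the quadric disappears and $F_x=\{f|_{T_xX}=0\}\subset\PP^{n-1}$ is a cubic hypersurface, of dimension $n-2$.

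\emph{Part (ii).} It is cleaner to work with $S_C=q^{-1}(C)\subset P$. Since $C$ is a smooth curve in the smooth variety $X$, the inclusion $C\hookrightarrow X$ is a regular embedding of codimension $n-1$, so every component of $S_C=q^{-1}(C)$ has codimension at most $n-1$ in $P$, hence dimension at least $(2n-3)-(n-1)=n-2$. On the other hand the morphism $q_0\colon S_C\to C$ has each fibre equal to some $F_x$, of dimension $n-3$ away from the finitely many Eckardt points on $C$ and of dimension $n-2$ at those points by (i); as $C$ is one-dimensional this forces $\dim S_C\leq n-2$. Thus $S_C$ is of pure dimension $n-2$. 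Finally $i_C\colon S_C\to F_C$ has fibre $\ell\cap C$ over $[\ell]$, which is finite unless $\ell\subset C$ (possible only if $C$ itself is a line, and then only over the single point $[C]$); so $i_C$ is finite onto $F_C$ outside a locus of dimension $\leq 1$, and $F_C$ is also of pure dimension $n-2$.

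\emph{Part (iii).} Write $X=\{f=0\}$ with $f$ a cubic form. For $x\neq c$ the restriction of $f$ to the line $\overline{xc}$ expands as
$$
f(sx+tc)=s^3f(x)+s^2t\,(\nabla f(x)\cdot c)+st^2\,(\nabla f(c)\cdot x)+t^3f(c).
$$
If $x,c\in X$ the outer terms vanish, so $\overline{xc}\subset X$ if and only if $\nabla f(x)\cdot c=0$ and $\nabla f(c)\cdot x=0$. Therefore $D_C$ is the image under $\mathrm{pr}_X$ of the incidence subscheme
$$
Z=\bigl\{(x,c)\in X\times C:\ \nabla f(x)\cdot c=0,\ \ \nabla f(c)\cdot x=0\bigr\},
$$
which is cut out by two divisors of bidegrees $(2,1)$ and $(1,2)$ with respect to $h$ and $\mathcal{O}_C(1)$. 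For each $c\in C$ the fibre of $Z\to C$ over $c$ is exactly $D_c$, of dimension $n-2$ by the analysis of $F_c$ in (i)--(ii) (recall $C$ may be taken to miss the finitely many Eckardt points); hence $Z$ has pure dimension $n-1$, the two divisors meet properly, and $[Z]=(2h+e\,[\mathrm{pt}])\cdot(h+2e\,[\mathrm{pt}])$ in $\mathrm{CH}^*(X\times C)$, where $e=\deg C$ and $[\mathrm{pt}]$ is the class of a point of $C$, pulled back from each factor (so $h|_C=e\,[\mathrm{pt}]$). Since $[\mathrm{pt}]^2=0$, pushing forward gives $(\mathrm{pr}_X)_*[Z]=5e\,h$ in $\mathrm{CH}_{n-1}(X)$; for $C$ general $\mathrm{pr}_X$ is birational onto $D_C$ (a general point of $D_C$ lies on a unique line of $X$ meeting $C$) while the diagonal part of $Z$ is contracted, and as $\mathrm{Pic}(X)=\Z h$ by Lefschetz we conclude $D_C\sim 5\deg(C)\,h$ (the general case suffices, the class being constant in a flat family).

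\emph{Expected main obstacle.} The only step that is not a formal manipulation is the finiteness of the Eckardt locus in (i); everything else is bookkeeping. Indeed the whole point of proving (i) and (ii) first is to guarantee in (iii) that the incidence scheme $Z$ has the expected dimension and meets the relevant divisors properly, so that the naive intersection-product formula for $[Z]$ is legitimate.
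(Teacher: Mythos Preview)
Your argument is correct and more self-contained than the paper's, which for (i) simply cites \cite{cs} and for (iii) writes $D_C=mh$ via $\Pic(X)=\Z h$ and then computes $m=D_C\cdot l$ for a general line $l$ as the number of lines meeting both $l$ and $C$, quoting \cite[Lemma~3.10]{shen} for the value $5e$. Your route in (iii)---pushing forward the intersection of the bidegree $(2,1)$ and $(1,2)$ divisors on $X\times C$---is genuinely different and yields the coefficient $5e$ by a direct Chow computation, with no external incidence-line count needed.

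Two points would benefit from one more sentence each. In (i) you assert $\dim F_x>n-3$ exactly when $Q_x\equiv 0$; strictly one must also exclude the case where $Q_x\not\equiv 0$ but $Q_x$ and the cubic $f|_{T_xX}$ share a factor, which would still give $\dim F_x=n-2$. This case is in fact impossible for smooth $X$---it forces $T_xX\cap X$ to contain a linear $\PP^{n-1}$, which would represent a divisor of $h$-degree $1$ in $\Pic(X)=\Z h$ while $h^n=3$---but you should say so; your finiteness argument by ``differentiation'' is also rather terse. In (iii), without the birationality of $\mathrm{pr}_X\colon Z\to D_C$ your computation gives only $\deg(Z\to D_C)\cdot[D_C]=5eh$. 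The birationality does hold for general $C$, since the locus of $y\in D_C$ lying on two distinct lines meeting $C$ is the image of an at-most-$(n-2)$-dimensional incidence variety (pairs $((l_1,x_1),(l_2,x_2))\in S_C\times S_C$ with $l_1\cap l_2\neq\emptyset$ and $x_1\neq x_2$, a generically finite cover of $S_C$) and hence has dimension $<n-1$; but this deserves to be spelled out.
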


\begin{proof}
In \cite[Lemma 2.1]{cs}, it is shown that $\dim
F_x=n-3$ for a general point $x\in X$. By \cite[Corollary 2.2]{cs}, there are at most finitely
many points $x_i$, called Eckardt points (see \cite[Definition 2.3]{cs}).
This proves (i). Statement (ii) follows from (i) directly. Statement
(ii) further implies that $D_C$ is a divisor. Since $\Pic(X)=\Z h$,
the class of $D_C$ has to be a multiple of $h$. Let $l\subset X$ be
a general line, then the intersection number of $l$ and $D_C$ is
equal to the number of lines meeting both $C$ and $l$. It is shown
in \cite[Lemma 3.10]{shen} that the above intersection number is
$5\deg(C)$.
\end{proof}

\begin{thm}\label{fundamental relation}
Let $C\subset X$ be a smooth curve of degree $e$. We use $h$ to
denote the class of a hyperplane section on $X$, viewed as an
element either in the Chow group or the (co)homology group. Then the
following are true.

(1) Let $\gamma$ be a topological cycle of real dimension $n$ on
$X(\C)$. Then
$$2e[\gamma]+\Psi(\Phi([\gamma])\cdot F_C)=3e\deg(\gamma)h^{\frac{n}{2}}$$
in $\mathrm{H}_n(X)$, where $\deg(\gamma)=\gamma\cdot h^i$ if $n=2i$
and $\deg(\gamma)=0$ otherwise.

(2) Let $\gamma$ be an algebraic cycle on $X$ of dimension $r$ with
$2\leq r\leq n-2$. Then
$$2e\gamma+\Psi(\Phi(\gamma)\cdot F_C)=3e\deg(\gamma)h^{n-r},$$
in $\mathrm{CH}_r(X)$, where $\deg(\gamma)=\gamma\cdot h^r$.

(3) Let $\gamma$ be an algebraic cycle of dimension 1 on $X$. Then
$$2e\gamma+\Psi(\Phi(\gamma)\cdot F_C)+2\deg(\gamma)C=3e\deg(\gamma)h^{n-1}$$
in $\mathrm{CH}_1(X)$.
\end{thm}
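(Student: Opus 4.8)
The plan is to reduce all three statements to a single geometric identity at the level of cycles on $X$, and then to peel off the three cases according to the codimension/dimension of $\gamma$. The key object is the variety $T'$ swept out by lines meeting both $\gamma$ and $C$, which one recognizes as $\Psi(\Phi(\gamma)\cdot F_C)$ up to understanding multiplicities: by definition $\Phi(\gamma)=p_*q^*\gamma$ is (a cycle supported on) the lines meeting $\gamma$, and intersecting with $F_C$ cuts this down to lines meeting both $\gamma$ and $C$; applying $\Psi=q_*p^*$ sweeps these lines out in $X$. So the first step is a transversality/general-position argument: for $\gamma$ in general position with respect to $C$ (which we may assume, since both sides of each asserted identity are defined on all of $\mathrm{CH}_r(X)$ or $\mathrm{H}_n(X)$ and we can move $\gamma$ by rational or homological equivalence), the scheme-theoretic intersection $\Phi(\gamma)\cdot F_C$ has the expected dimension $r-1$ — here Lemma \ref{lem dimension}(ii) guarantees $F_C$ itself has the expected dimension $n-2$, and a dimension count on the correspondence $P$ controls the generic fibers of $q$ restricted to $p^{-1}(\Phi(\gamma)\cap F_C)$ — and the pushforward $\Psi(\Phi(\gamma)\cdot F_C)$ equals $T'$ with multiplicity one along its generic points.

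The heart of the matter is the local computation that produces the coefficient $2e$ and the term $3e\deg(\gamma)h^{\bullet}$. The mechanism is the standard one for cubics: fix a point $x\in\gamma$ in general position, and consider the cone $D_x$ of lines through $x$, a cone over the $(2,3)$ complete intersection $F_x\subset\PP(\mathcal T_{X,x})$. For a line $\ell$ through $x$, the residual intersection $\ell\cdot X$ consists of $x$ together with a residual divisor; the cubic equation governing $X$ restricted to a plane (or to the relevant pencil) gives, after accounting for tangency, the numerical input. Concretely, I expect to argue as follows. Project from $\gamma$: the lines through a general point $x\in\gamma$ and meeting $C$ correspond to the intersection of $F_x$ with the hyperplane-type conditions imposed by $C$; counting these with the cubic relation ``three points on a line'' yields that the class of $\gamma+\tfrac12 T'$ plus a correction is proportional to a power of $h$. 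The factor $2e$ is $2\deg(C\cdot\text{line})=2e$ via the intersection number $5\deg(C)$ appearing in Lemma \ref{lem dimension}(iii) combined with the ``$3$'' from $\deg X=3$; I would make this precise by the same bookkeeping as in \cite{shen}, Lemma 3.10 and its neighbourhood, now carried out in dimension $r$ rather than in dimension $1$.

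The case distinction then comes down to the following. When $2\leq r\leq n-2$ (statement (2)), the expected-dimensional cycle $T'=\Psi(\Phi(\gamma)\cdot F_C)$ and the relation $2e\gamma+T'=3e\deg(\gamma)h^{n-r}$ holds in $\mathrm{CH}_r(X)$; here one uses that for $r\geq 2$ the intersection-with-$h^{n-r}$ pairing detects the coefficient, so it suffices to check the identity after capping with an appropriate power of $h$, and that reduces to the $1$-cycle case or to a direct count. When $r=1$ (statement (3)) there is an extra contribution: the lines through a point of $\gamma$ that meet $C$ can \emph{contain} a component sweeping back across $C$ itself — more precisely the residual-line construction, applied fibrewise over the curve $\gamma$, produces the curve $C$ itself with multiplicity $2\deg(\gamma)$ — which is exactly the extra term $2\deg(\gamma)C$; this is the content of the $r=1$ relation already established in \cite{shen}, and I would simply cite it or re-derive it by specialization. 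Finally, statement (1), for a topological $n$-cycle $\gamma$, follows from (2) by the cycle class map together with the fact that every degree-$0$ primitive class and every power of $h$ is algebraic in the relevant range — alternatively, one runs the same transversality-plus-local-count argument directly with topological chains, the algebraicity of $F_C$ and $P$ making $\Phi$ and $\Psi$ well-defined on homology.

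The main obstacle I anticipate is the transversality and multiplicity analysis in step one: ensuring that for $\gamma$ in general position the correspondence cycle $\Phi(\gamma)\cdot F_C$ is reduced of the expected dimension and that $\Psi$ of it has no unexpected multiplicities, especially near the Eckardt points of Lemma \ref{lem dimension}(i) where $F_x$ jumps dimension, and near points where $\gamma$ meets the divisor $D_C$. Handling these loci — showing they contribute in codimension $\geq 1$ to the relevant cycles and hence do not affect the rational (or homological) equivalence class — is where the real work lies; the rest is the cubic-surface bookkeeping, which is routine once the first case ($r=1$, from \cite{shen}) is in hand and the dimension count of Lemma \ref{lem dimension} is invoked.
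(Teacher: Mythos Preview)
Your proposal has two genuine gaps, and it misses the central construction of the paper's proof.

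First, the reduction you propose for (2) --- ``cap with an appropriate power of $h$ and reduce to the $1$-cycle case'' --- only tests the identity numerically. Capping with $h^{n-r}$ and comparing degrees verifies an equality in $\mathrm{H}_*(X)$, not in $\mathrm{CH}_r(X)$; since the whole point of (2) is that it holds modulo rational equivalence, this reduction cannot work. Likewise, your reduction of (1) to (2) via the cycle class map goes in the wrong direction: (1) is a statement about an arbitrary topological $n$-cycle $\gamma$, and for $n$ odd there are no nonzero algebraic classes in $\mathrm{H}^n(X)$ at all, while for $n$ even the primitive cohomology of a cubic is certainly not known to be generated by algebraic cycles. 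Your fallback (``run the same argument with topological chains'') is the right instinct, but it is not a proof until you say what that argument is.

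Second, the ``local computation'' you sketch --- projecting from $\gamma$, counting lines through a point, invoking $5\deg(C)$ --- does not produce the relation. The coefficient $2e$ does not come from the secant-line count $5e$; that number plays no role here. What the paper actually does is global and uniform across all three cases: reduce to $\gamma=f_*[M]$ with $f(M)$ disjoint from $C$ and transverse to $D_C$, form the map $i_0:M\times C\to G(2,n+2)$ sending $(t,x)$ to the line through $f(t)$ and $x$, and pull back the universal $\PP^1$-bundle to get $\pi:P\to M\times C$. The fibre product $P\times_{\PP^{n+1}}X$ breaks as $D_1+D_2+D=3h$ on $P$, where $D_1,D_2$ are the sections through $f(M)$ and $C$ and $D$ records the residual third point. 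Over the locus $\Delta\subset M\times C$ where the line lies on $X$, the map $\pi':D\to M\times C$ acquires an exceptional $\PP^1$-bundle $E$, and one shows (algebraically, via an explicit blow-up lemma) that $h|_D+E=2\pi'^*c_1(\mathscr{E})$ with $c_1(\mathscr{E})=p_1^*f^*h+p_2^*(h|_C)$. Pushing this identity to $X$ via $g:D\to X$, using that $g_*E=\Psi(\Phi(\gamma)\cdot F_C)$ and that the $D_i$ contract, yields $g_*E=3ed\,h^{n-r}-2e\,f_*M$ directly in $\mathrm{CH}_r(X)$ (respectively $\mathrm{H}_n(X)$). The extra term $2\deg(\gamma)C$ in (3) appears precisely because when $\dim M=1$ the class $g_{2*}(p_1^*f^*h)$ no longer vanishes but equals $\deg(f_*M)\,C$. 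None of this is visible in your outline; you should replace the vague ``projection from $\gamma$'' picture with this $M\times C$ correspondence and the three-divisor decomposition on $P$.
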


\begin{rmk}
Statement (1) holds for other dimensional topological cycles too.
However, by the Lefschetz hyperplane theorem, it is only interesting
when $\gamma$ has dimension $n$.
\end{rmk}

\begin{cor}\label{cor fundamental relation}
Let $\gamma\in\mathrm{H}_n(X,\Z)$, $\fa\in\mathrm{CH}_1(X)$ and
$\fb\in\mathrm{CH}_l(X)$. Then the following are true

(i)
$2\deg(\fa)\gamma+\Psi(\Phi(\gamma)\cdot\Phi([\fa]))=3\deg(\fa)\deg(\gamma)h^{\frac{n}{2}}$
in $\mathrm{H}_n(X,\Z)$.

(ii) If $n-2\geq l\geq2$, then
$2\deg(\fa)\fb+\Psi(\Phi(\fa)\cdot\Phi(\fb))=3\deg(\fa)\deg(\fb)h^{n-l}$
in $\mathrm{CH}_l(X)$.

(iii) If $l=1$, then $2\deg(\fa)\fb +2\deg(\fb)\fa
+\Psi(\Phi(\fa)\cdot\Phi(\fb))=3\deg(\fa)\deg(\fb)h^{n-1}$ in
$\mathrm{CH}_1(X)$.
\end{cor}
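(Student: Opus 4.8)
The plan is to derive the Corollary from Theorem \ref{fundamental relation} by expressing the incidence locus $F_C$ through the Abel--Jacobi operator $\Phi$. The essential point is the identity
$$\Phi([C])=[F_C]\qquad\text{in }\mathrm{CH}_{n-2}(F)$$
(and its image under the cycle class map in $\mathrm{H}_{2n-4}(F,\Z)$), valid for a general smooth curve $C\subset X$, in particular one avoiding the finitely many Eckardt points of Lemma \ref{lem dimension}(i). Granting this, the three parts of the Corollary become exactly the three parts of Theorem \ref{fundamental relation} with $\fa=[C]$: one replaces $F_C$ by $\Phi(\fa)$, uses the commutativity of the intersection product to write $\Phi(\gamma)\cdot F_C=\Phi(\fa)\cdot\Phi(\gamma)$, and notes $\deg([C])=\deg(C)=e$. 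In part (iii) the extra term $2\deg(\gamma)C$ of Theorem \ref{fundamental relation}(3) matches $2\deg(\fb)\fa$ once $\fa$ is represented by the actual curve $C$.

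First I would establish $\Phi([C])=[F_C]$. Since $P=P(X)$ is smooth and $p\colon P\to F$ is a $\PP^1$-bundle, while $q\colon P\to X$ has equidimensional fibres of dimension $n-3$ over the complement of the Eckardt points, $q$ is flat there by miracle flatness; hence for a general $C$ we have $q^*[C]=[q^{-1}(C)]=[S_C]$, the scheme $q^{-1}(C)$ being generically reduced because $F_x$ is reduced for general $x\in C$. By Lemma \ref{lem dimension}(ii) the locus $F_C$ is pure of dimension $n-2$, the same as $\dim S_C$, so $i_C=p|_{S_C}\colon S_C\to F_C$ is a proper surjection of varieties of equal dimension; since a general line on $X$ meeting $C$ meets it in a single point, $i_C$ is birational and $p_*[S_C]=[F_C]$, giving $\Phi([C])=p_*q^*[C]=[F_C]$. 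The same computation, or simply applying the cycle class map (which commutes with $p_*$ and $q^*$ here), yields the homological identity needed in part (i).

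Second, substituting $F_C=\Phi([C])$ into Theorem \ref{fundamental relation} proves the Corollary for $\fa=[C]$, $C$ a general smooth curve; and since every term of each identity is $\Z$-linear (indeed bilinear) in $\fa$, it remains only to know that $\mathrm{CH}_1(X)$ is generated by classes of such curves. For $n\ge 5$ this is immediate: $\mathrm{CH}_1(X)\cong\Z$ is generated by the class of a line. For $n=3,4$ one uses the standard fact that any $1$-cycle on a smooth projective variety of dimension $\ge 2$ is a $\Z$-combination of smooth curves --- given an integral curve, pass to a smooth surface containing it (a general complete intersection of sufficiently positive hypersurfaces through the curve, after a blow-up if needed) and apply the divisor case, i.e. Bertini inside $\Pic$ of the surface --- and then perturbs the representatives to avoid the Eckardt points.

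The main difficulty is the first step: getting $\Phi([C])=[F_C]$ \emph{with multiplicity exactly one}. This rests on three geometric inputs --- the equidimensionality (hence flatness) of $q$ over $C$, the generic reducedness of $q^{-1}(C)$, and the birationality of $p|_{S_C}\colon S_C\to F_C$ --- each of which is transparent for a sufficiently general smooth $C$ but is precisely where the genericity hypotheses are used; once it is in hand, the rest of the proof is formal.
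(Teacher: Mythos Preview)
Your proposal is correct and is essentially the argument the paper has in mind; the paper states the Corollary without proof, treating it as an immediate consequence of Theorem \ref{fundamental relation} once one observes that $\Phi([C])=[F_C]$ in $\mathrm{CH}_{n-2}(F)$ for a general smooth curve $C$. You have correctly supplied this identification (via flatness of $q$ over $C$, generic reducedness of $S_C$, and birationality of $i_C\colon S_C\to F_C$) and the linearity argument needed to pass from classes of general smooth curves to all of $\mathrm{CH}_1(X)$.
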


\begin{lem}\label{lem linear span}
Let $\gamma_1$ and $\gamma_2$ be two disjoint irreducible
topological cycles on $\PP^{n+1}$ of real dimensions $r_1$ and
$r_2$. Let $L(\gamma_1,\gamma_2)$ be the set swept out by all
complex lines meeting both $\gamma_1$ and $\gamma_2$. Assume that
$\gamma_1$ and $\gamma_2$ are in general position and
$L(\gamma_1,\gamma_2)$ has expected dimension. Then we have
$$
L(\gamma_1,\gamma_2)=\deg(\gamma_1)\deg(\gamma_2)h^{n-\frac{r_1+r_2}{2}}
$$
in $\mathrm{H}_{r_1+r_2+2}(\PP^{n+1},\Z)$. Here
$\deg(\gamma)=\gamma\cdot h^{\frac{r}{2}}$ if $r=\dim_\R(\gamma)$ is
even and 0 otherwise. By convention $h^{k+\frac{1}{2}}=0$ for all
integer $k$.
\end{lem}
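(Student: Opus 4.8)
The plan is to reduce the statement to a degree computation via a parameter-space argument on the Grassmannian $G = G(2,n+2)$, exactly parallel to how one computes incidence loci in intersection theory. First I would set up the universal line: let $\mathscr{L} \to G$ be the tautological $\PP^1$-bundle and $u : \mathscr{L} \to \PP^{n+1}$ the evaluation map. The locus of lines meeting $\gamma_i$ is the topological cycle $Z_i = p_G(u^{-1}(\gamma_i)) \subset G$, where $p_G : \mathscr{L} \to G$; since $\gamma_i$ is in general position its class is $p_{G*}u^*[\gamma_i]$, a well-defined homology class on $G$ of the expected codimension. Then $L(\gamma_1,\gamma_2)$ is swept out by the lines parameterized by $Z_1 \cap Z_2$, i.e. $L(\gamma_1,\gamma_2) = u(p_G^{-1}(Z_1 \cap Z_2))$, and under the general-position and expected-dimension hypotheses this is a cycle of real dimension $r_1 + r_2 + 2$ whose class is $u_* p_G^*([Z_1] \cdot [Z_2])$.

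The next step is to identify this class in $\mathrm{H}_*(\PP^{n+1},\Z)$, which is free of rank one in each even degree generated by powers of $h$. So the only content is a single intersection number: one computes $L(\gamma_1,\gamma_2) \cdot h^{\frac{r_1+r_2}{2}+1}$ (when $r_1+r_2$ is even; the odd case I address below) and checks it equals $\deg(\gamma_1)\deg(\gamma_2)$. By the projection formula this number is $u^*(h^{\frac{r_1+r_2}{2}+1}) \cdot p_G^*([Z_1]\cdot[Z_2])$ evaluated on $\mathscr{L}$, which by pushing to $G$ becomes a product of the Schubert-type classes $[Z_1]$, $[Z_2]$ and a power of the special Schubert class $\sigma_1 = c_1(\calO_G(1))$ against the fundamental class of $G$. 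Concretely, intersecting with $h^k$ on $\PP^{n+1}$ corresponds on $G$ to cutting down by the locus of lines meeting a general linear subspace of the appropriate codimension, and the classical Schubert calculus on $G(2,n+2)$ evaluates the resulting top-degree monomial. The bookkeeping is: a general line meets a general $(r_i$-real-dimensional$)$ cycle $\gamma_i$ in a set of real dimension $r_i - 2(n-1)$, so for the incidence to be nonempty in the generic fiber one slices until the dimensions match, and the count of lines through a fixed point of $\gamma_1$, a fixed point of $\gamma_2$, forced to lie in the relevant linear section, is exactly one — this yields $\deg(\gamma_1)\deg(\gamma_2)$ after accounting for the degrees (the number of such fixed points).

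For the parity convention: if $r_1 + r_2$ is odd, then $\mathrm{H}_{r_1+r_2+2}(\PP^{n+1},\Z)$ is torsion-free of rank zero (odd-degree homology of projective space vanishes), so the class $L(\gamma_1,\gamma_2)$ is automatically $0$, consistent with the convention $h^{k+\frac12} = 0$; I would simply remark this. Similarly if one of the $\gamma_i$ has odd real dimension, $\deg(\gamma_i) = 0$ by definition and one must check the left-hand class also vanishes — this follows since then $r_1 + r_2 + 2$ has the wrong parity, or more robustly since $[Z_i]$ then lies in an odd-degree homology group of $G$ and $u_* p_G^*$ of a product involving it lands in odd-degree homology of $\PP^{n+1}$, which is zero. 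The main obstacle I anticipate is not the Schubert calculus itself but justifying rigorously that these \emph{topological} (not necessarily algebraic) cycles behave like algebraic cycles under the correspondence $u_* p_G^*$: one needs the general-position hypothesis to guarantee transversality so that $u^{-1}(\gamma_i)$ and the fiber products are genuine cycles of the expected dimension with well-defined classes, and one needs the expected-dimension hypothesis on $L(\gamma_1,\gamma_2)$ so that no excess-intersection correction appears. Once transversality is in hand, the computation is a routine application of the ring structure $\mathrm{H}^*(\PP^{n+1}) = \Z[h]/(h^{n+2})$ together with the standard fact that the degree of the locus of lines meeting two general linear subspaces of complementary dimension is $1$.
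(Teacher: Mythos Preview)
Your approach is correct, but it is considerably more elaborate than the paper's. The paper dispatches this lemma in one sentence: since $\mathrm{H}_*(\PP^{n+1},\Z)$ is generated in each degree by the class of a linear subspace, and since the class $[L(\gamma_1,\gamma_2)]$ depends only on $[\gamma_1]$ and $[\gamma_2]$, one reduces immediately to the case where each $\gamma_i=\PP^{a_i}$ is a linear subspace with $r_i=2a_i$. In that case $L(\PP^{a_1},\PP^{a_2})$ is simply the linear join $\PP^{a_1+a_2+1}$, whose class is $h^{n-a_1-a_2}=h^{n-\frac{r_1+r_2}{2}}$, and both degrees equal $1$; the odd-dimensional cases are then vacuous. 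Your correspondence $u_*p_G^*$ on the universal line over $G(2,n+2)$ is precisely what justifies the ``depends only on homology classes'' step, so the two arguments are logically close --- but once that reduction is granted, the paper plugs in the simplest representatives and avoids any Schubert calculus or intersection-number computation. Your route has the merit of making the bilinearity explicit and of handling the parity cases (including the case where both $r_i$ are odd, via the vanishing of odd homology of $G$) more carefully than the paper, which leaves all of this to the reader; the cost is invoking more machinery than the problem actually requires.
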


\begin{lem}\label{lem c1 product}
Assume that $\gamma_1$ and $\gamma_2$ are two topological manifolds
(resp. smooth projective varieties). Let
$f_i:\gamma_i\rightarrow\PP^{n+1}$, $i=1,2$, be two disjoint
irreducible topological (algebraic) cycles on $\PP^{n+1}$. Let
$\varphi:\gamma_1\times\gamma_2 \rightarrow G(2,n+2)$ be the map
(morphism) sending a pair of points $(x,y)$ to the unique complex
line connecting them. Let $\mathscr{E}_2$ be the canonical rank 2
quotient bundle on $G(2,n+2)$ and
$\mathscr{E}'=\varphi^*\mathscr{E}_2$ its pullback. Let
$p_i:\gamma_1\times\gamma_2 \rightarrow \gamma_i$, $i=1,2$, be the
two projections. Then
$$
c_1(\mathscr{E}')=p_1^*(f_1^*h) +p_2^*(f_2^*h)
$$
in $\mathrm{H}^2(\gamma_1\times\gamma_2,\Q)$ (resp.
$\mathrm{CH}^1(\gamma_1\times\gamma_2)$), where $h$ is the class of
a hyperplane. In other words, $c_1(\mathscr{E}')$ is the pull-back of the class $\pi_1^*h +\pi_2^* h$ via the natural map $f_1\times f_2: \gamma_1\times\gamma_2 \rightarrow \PP^{n+1}\times\PP^{n+1}$, where $\pi_i:\PP^{n+1}\times \PP^{n+1}\rightarrow \PP^{n+1}$ are the natural projections.
\end{lem}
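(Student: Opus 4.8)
The plan is to describe the rank $2$ bundle $\mathscr{E}'=\varphi^*\mathscr{E}_2$ concretely as a direct sum of two pulled-back line bundles and then take first Chern classes. Write $\PP^{n+1}=\PP(V)$ for a fixed $(n+2)$-dimensional vector space $V$, with tautological sub-line-bundle $\calO_{\PP^{n+1}}(-1)\hookrightarrow V\otimes\calO_{\PP^{n+1}}$, so that $c_1(\calO_{\PP^{n+1}}(-1))=-h$. On $\gamma_1\times\gamma_2$ put
$$\mathscr{L}_i:=p_i^*f_i^*\calO_{\PP^{n+1}}(-1)\qquad(i=1,2),$$
a topological complex line bundle in the first case and an invertible sheaf in the second; each carries the pulled-back embedding $\mathscr{L}_i\hookrightarrow V\otimes\calO$ whose fiber over a point $(x_1,x_2)$ is the line in $V$ lying over $f_i(x_i)\in\PP(V)$.

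Recall that $G(2,n+2)$ carries a tautological rank $2$ sub-bundle $\mathscr{T}$ whose fiber over a line $\ell\subset\PP^{n+1}$ is the $2$-plane $W_\ell\subset V$ with $\PP(W_\ell)=\ell$; the Pl\"ucker embedding is given by $\det(\mathscr{T})^\vee$, so in the corresponding conventions $c_1(\mathscr{E}_2)=-c_1(\mathscr{T})$, the Pl\"ucker hyperplane class. By the very definition of $\varphi$ — it sends $(x_1,x_2)$ to the line of $\PP^{n+1}$ joining $f_1(x_1)$ and $f_2(x_2)$ — the fiber of $\varphi^*\mathscr{T}$ over $(x_1,x_2)$ is the $2$-plane in $V$ spanned by the two lines of the previous paragraph; that is, $\varphi^*\mathscr{T}$ is the image of the bundle map $\mathscr{L}_1\oplus\mathscr{L}_2\to V\otimes\calO$. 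This is the one place the hypothesis enters: since $f_1(\gamma_1)$ and $f_2(\gamma_2)$ are disjoint, the two lines in $V$ are distinct over \emph{every} point $(x_1,x_2)$, so this map is injective on every fiber and is therefore a sub-bundle inclusion; hence $\varphi^*\mathscr{T}\cong\mathscr{L}_1\oplus\mathscr{L}_2$.

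Taking first Chern classes now gives
$$c_1(\mathscr{E}')=\varphi^*c_1(\mathscr{E}_2)=-c_1(\varphi^*\mathscr{T})=-c_1(\mathscr{L}_1)-c_1(\mathscr{L}_2)=p_1^*(f_1^*h)+p_2^*(f_2^*h),$$
which is the claimed identity; the reformulation in terms of $f_1\times f_2$ follows at once from $p_i=\pi_i\circ(f_1\times f_2)$. The argument is identical in the topological setting (in $\mathrm{H}^2(\gamma_1\times\gamma_2,\Q)$) and the algebraic one (in $\mathrm{CH}^1(\gamma_1\times\gamma_2)$): $\varphi$ and the $f_i$ are honest continuous maps, resp.\ morphisms, $c_1$ is natural under pull-back, and a fiberwise injective map of topological complex vector bundles over a paracompact base is a locally split inclusion, just as for locally free sheaves. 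Alternatively, disjointness lets one factor $\varphi$ through the morphism $(\PP^{n+1}\times\PP^{n+1})\setminus\Delta\to G(2,n+2)$ and reduce to the universal case $\gamma_i=\PP^{n+1}$, $f_i=\mathrm{id}$, where the same computation applies.

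I do not expect a genuine obstacle. The only points requiring care are the bookkeeping of conventions for $G(2,n+2)$ and the Pl\"ucker embedding — i.e.\ identifying $\mathscr{E}_2$ precisely and hence pinning down the sign $c_1(\mathscr{E}_2)=-c_1(\mathscr{T})$ — and making sure the disjointness hypothesis is used to guarantee that $\mathscr{L}_1\oplus\mathscr{L}_2\to V\otimes\calO$ is a sub-bundle \emph{everywhere} (not merely generically), so that $c_1(\varphi^*\mathscr{T})$ is the sum of $c_1(\mathscr{L}_1)$ and $c_1(\mathscr{L}_2)$.
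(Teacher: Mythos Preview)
Your argument is correct. The direct identification $\varphi^*\mathscr{T}\cong\mathscr{L}_1\oplus\mathscr{L}_2$ (using disjointness to guarantee fiberwise injectivity of $\mathscr{L}_1\oplus\mathscr{L}_2\hookrightarrow V\otimes\calO$) is clean, and the Chern-class computation that follows is immediate. Your reading of the convention is also the right one: the paper's later proof of Lemma~\ref{lem blow up} shows explicitly that $\mathscr{E}^\vee\cong p_1^*f^*\calO(-1)\oplus p_2^*\calO(-1)$, confirming $\mathscr{E}_2=\mathscr{T}^\vee$ and hence $c_1(\mathscr{E}_2)=-c_1(\mathscr{T})$.

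The paper itself does not actually prove this lemma: it gives only the one-line hint that, since $\mathrm{H}^*(\PP^{n+1})$ (resp.\ $\mathrm{CH}^*(\PP^{n+1})$) is generated by $h$, one may reduce to the case where $\gamma_1,\gamma_2$ are linear subspaces, and then leaves the verification to the reader. Your approach is different in that it bypasses this reduction entirely and writes down the bundle isomorphism globally; the payoff is that you obtain not just the $c_1$ identity but the full splitting of $\mathscr{E}'$, which is exactly what the paper needs (and rederives) in the proof of Lemma~\ref{lem blow up}. You even note the reduction route as an alternative at the end, so you have both arguments in hand.
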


Since the cohomology (Chow ring) of $\PP^{n+1}$ is generated by the
class of a hyperplane, both of the above lemmas can be reduced to
the case where both $\gamma_1$ and $\gamma_2$ are complex linear
subspaces. The proofs are left to the reader.

\begin{proof}[Proof of Theorem \ref{fundamental relation}]
To prove (1), we first reduce to the case $\gamma=f_*[M]$, where $f:
M\rightarrow X$ be a continuous map from an $n$-dimensional
topological manifold $M$ to $X$ such that (i) $C$ does not meet
$f(M)$; (ii) $f_*M$ intersects $D_C$ transversally and
$\Delta_0:=\Set{t\in M}{f(t)\in D_C}$ is of pure expected real
dimension $\dim M-2$. The reason why we can do the above reduction
can be seen as follows. First, $\gamma$ is always of the form of
linear combinations of cycles of the form $f_*[M]$. If we can prove
the statement (1) for each term in such a the linear combination,
then we prove (1) for $\gamma$. To get (i) and (ii), we note that
$C$ is a curve and $D_C$ is a divisor and hence we can always move
the cycle $\gamma$ to a cycle $\gamma'$ such that (i) and (ii) hold.

Now we assume the situation after reduction. For each pair of points
$(t,x)\in M\times C$, there is a unique (complex projective) line
passing through $f(t)$ and $x$. This gives a continuous map
$i_0:M\times C\rightarrow \mathrm{G}(2,n+2)$. We get the following
diagram
\begin{equation}\label{topology basic diagram}
\xymatrix{
  D_1\cup D_2\cup D\ar[r]\ar[d] &Y\ar[r]\ar[d] &X\ar[d]^{i_X}\\
  P\ar[r]^{i'_0\qquad}\ar[d]_\pi &\mathrm{G}(1,2,n+2)\ar[d]_{\tilde{\pi}}
  \ar[r]^{\quad\tilde{\varphi}} &\PP^{n+1}\\
  M\times C\ar[r]^{i_0\quad} &\mathrm{G}(2,n+2) &
}
\end{equation}
where all the squares are fiber products; $D_1$ and $D_2$ are
sections of $\pi$ corresponding to the points on $f(M)$ and $C$
respectively; $D$ is the set of the third points of the intersection of the lines with
$X$. Let $\pi':D\rightarrow M\times C$ be the restriction of $\pi$
to $D$. Let $\Delta\subset M\times C$ be the closed subset of points
$(t,x)$ such that the line through $f(t)$ and $x$ is contained in
$X$. By definition, $\Delta_0$ is the image of $\Delta$ under the
projection to the first factor $M$ and hence
$\dim\Delta=\dim\Delta_0$. Then $\pi'$ is one-to-one away from
$\Delta$ while over $\Delta$ it is an $S^2$-bundle with trivial
Euler class (note that by taking the point in which the line meets
$C$, we have a section of this bundle). Consider the following
diagram
\begin{equation}\label{topology blow up}
\xymatrix{
 \PP^1=S^2\ar[r] &E\ar[r]\ar[d] &D\ar[d]^{\pi'}\\
     &\Delta\ar[r] &M\times C
}
\end{equation}
For a general point $z=(t_0,x_0)\in\Delta$, we use $E_{z}$ to denote
the fiber of $E\rightarrow \Delta$ at the point $z$.\\

\textsl{Claim 1}: $E\cdot E_z=-1$.
\begin{proof}[Proof of Claim 1]
Let $U_0\subset C$ be a small 2-dimensional (meaning real dimension)
disc centered at the point $x_0\in C$. Let $U_z=\{t_0\}\times U_0
\subset M\times C$ be the corresponding small disc centered at $z$.
By the assumption that $z\in\Delta$ is general, we see that $U_z$
meets $\Delta$ transversally at the point $z$. Then we easily see
that $\pi'^*U_z=\tilde{U}_z + E_z$, where
$\tilde{U}_z=\pi'^{-1}(U_z\backslash\{z\})\cup\{y_0\in E_z\}$. The
existence of the point $y_0$ can be seen as follows. Let
$f_0:C\dashrightarrow X$ be the rational map defined by sending
$x\in C\subset X$ to the third point of the intersection of $X$ with
the line connecting $f(t_0)$ and $x$. Since $C$ is a smooth curve,
this rational map extends to a morphism $f_0:C\rightarrow X$. Then
one easily sees that $y_0=f_0(x_0)$. This implies that
$\tilde{U}_z\cdot E=1$. The projection formula gives
$$
(E_z+\tilde{U}_z)\cdot E = \pi'^*U_z\cdot E = U_z\cdot
\pi_*E=U_z\cdot 0=0.
$$
It follows that $E_z\cdot E=-1$.
\end{proof}

Let $g:D\rightarrow X$ and $g_i:D_i\rightarrow X$, $i=1,2$, be the
natural maps. Let $h$ be the (co)homology class of a hyperplane.
\begin{lem}\label{topology key lemma}
Let $d=\deg(f_*[M])$. The following are true.\\
(i) $g_*[D]=ed\,h^{\frac{n}{2}-1}$ in $\mathrm{H}_{n+2}(X)$;\\
(ii) $g_*(h|_D)=ed\,h^{\frac{n}{2}}$ in $\mathrm{H}_n(X)$;\\
(iii) $h|_D+[E]=2\pi'^*c_1(\mathscr{E})$, where
$\mathscr{E}=i_0^*\mathscr{E}_2$ is the pull back of the canonical
rank 2 quotient bundle $\mathscr{E}_2$ on
$\mathrm{G}(2,n+2)$;\\
(iv) $c_1(\mathscr{E})=p_1^*(h|_M)+p_2^*(h|_C)$, where $p_1:M\times
C\rightarrow M$ and $p_2:M\times C\rightarrow C$ are the natural
projections;\\
(v) $g_*(\pi'^*c_1(\mathscr{E}))=2ed\,h^{\frac{n}{2}}-e\,f_*[M]$.
\end{lem}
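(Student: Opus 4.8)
The plan is to establish the five assertions roughly in the order (iv), (i), (ii), (iii), (v), since (iv) and the geometry of $D$ feed into everything else, and (v) is the payoff that will be used to deduce Theorem \ref{fundamental relation}(1). For (iv), I would apply Lemma \ref{lem c1 product} directly: the map $i_0: M\times C\to G(2,n+2)$ is exactly the map $\varphi$ of that lemma for the two cycles $f_*[M]$ and $C\hookrightarrow X\subset \PP^{n+1}$, which are disjoint by reduction hypothesis (i). So $c_1(\mathscr{E})=c_1(i_0^*\mathscr{E}_2)=p_1^*(f^*h)+p_2^*(h|_C)$, and $f^*h=h|_M$ by abuse of notation. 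This is immediate once the identification of $i_0$ with $\varphi$ is spelled out.

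For (i) and (ii), the idea is that $D\subset Y\to X$ parametrizes ``third intersection points'' and that $g_*[D]$, being an $(n+2)$-cycle on $X$ with $\Pic$ and low-codimension cohomology generated by $h$, must be a multiple of $h^{n/2-1}$; the multiple is computed by intersecting with a general linear space, equivalently by the degree count $\deg(f_*[M])\cdot\deg(C)=d\cdot e$ coming from Lemma \ref{lem linear span} applied in $\PP^{n+1}$ and then pushing the relation forward along $i_X$ (one must check the cycle $L(f(M),C)$ meets $X$ in the expected way, which is where reduction hypothesis (ii) and transversality with $D_C$ are used). Statement (ii) then follows from (i) by the projection formula $g_*(h|_D)=g_*(g^*h\cdot[D])=h\cdot g_*[D]=ed\,h^{n/2}$.

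Statement (iii) is the local/geometric heart: over the locus where the connecting line is \emph{not} contained in $X$, the three intersection points of the line with $X$ are the two marked points and the ``third point'', so on $D\setminus E$ the divisor class of the third point plus the two sections equals $X\cdot(\text{line})=3\,(\text{point})$ in the $\PP^1$-bundle $\mathrm{G}(1,2,n+2)\to G(2,n+2)$; translating ``class of three marked points summing along the fibre'' into the relative $\calO(1)$, whose first Chern class on the flag bundle is $\tilde\pi^*c_1(\mathscr{E}_2)$ up to the tautological sub-bundle bookkeeping, yields $h|_D + D_1 + D_2 = 3\,\pi'^*c_1(\mathscr{E})$ or the stated variant; over $E$ one corrects using Claim 1 ($E\cdot E_z=-1$) and the triviality of the Euler class of the $S^2$-bundle, which accounts for the factor $2$ rather than $3$. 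I expect this step to be the main obstacle, because it requires carefully matching the blow-up picture in diagram (\ref{topology blow up}) with the divisor-class computation on the incidence variety, keeping track of the sections $D_1,D_2$ and the exceptional contribution $E$; the sign in Claim 1 is exactly what makes the bookkeeping come out to $2c_1(\mathscr{E})$.

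Finally, for (v) I would push forward the identity in (iii). Write $2\,\pi'^* c_1(\mathscr{E}) = h|_D + [E]$ and apply $g_*$. By (ii), $g_*(h|_D) = ed\,h^{n/2}$. For $g_*[E]$: the exceptional divisor $E$ is an $S^2$-bundle over $\Delta$ whose image under $g$ consists of points $f_0(x_0)$ as $(t_0,x_0)$ ranges over $\Delta$; since $\Delta\cong\Delta_0$ maps isomorphically to its image under the first projection and then the ``third-point'' map recovers $f(M)$ along $\Delta_0$ (the line through $f(t_0)$ and $x_0$ lies in $X$, so its third point is again on $f(M)$), one gets $g_*[E] = f_*[M] - (\text{contributions from }D_C\text{-locus}) = f_*[M]$ up to the expected-dimension bookkeeping. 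Combining, $2\,g_*(\pi'^*c_1(\mathscr{E})) = ed\,h^{n/2} + g_*[E]$, and a symmetric argument (or re-examining which marked point plays which role) upgrades this to $2ed\,h^{n/2} - e\,f_*[M]$ after accounting for the second section $D_2$ supported over $C$, which does not meet $f(M)$. I would then double-check the coefficient $e = \deg(C)$ against the normalization in Lemma \ref{lem dimension}(iii) and the $5\deg(C)h$ class of $D_C$ to make sure the two independent computations of $g_*(\pi'^*c_1(\mathscr{E}))$ agree.
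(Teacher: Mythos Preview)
Your treatment of (iv), (i), and (ii) is essentially the paper's argument, though for (i) the paper is more direct: rather than arguing that $g_*[D]$ must be a multiple of a power of $h$ and computing a degree, it observes that the pushforward of $[D_1]+[D_2]+[D]$ to $X$ is the restriction of $[P]\in\mathrm{H}_*(\PP^{n+1})$, computes $[P]=ed\,h^{n/2-1}$ via Lemma~\ref{lem linear span}, and notes $(g_i)_*[D_i]=0$ since each $D_i$ is contracted.

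Your sketch of (iii) is muddled. The formula ``$h|_D+D_1+D_2=3\pi'^*c_1(\mathscr{E})$'' does not parse ($D_1,D_2$ are not cycles on $D$), and you have not explained why $h|_D+[E]$ should be a pullback from $M\times C$ at all. The paper's argument is: use Claim~1 ($E\cdot E_z=-1$) together with $h\cdot E_z=1$ to see that $h|_D+[E]$ restricts to zero on every fiber $E_z$, hence by Leray--Hirsch is $\pi'^*\fa$ for some $\fa$ on $M\times C$; then apply $\pi'_*$ and compute $\fa=\pi'_*(h|_D)$ using the relation $D_1+D_2+D=3h$ on $P$ and the tautological relation $h^2=\pi^*c_1(\mathscr{E})h-\pi^*c_2(\mathscr{E})$. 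The ``factor $2$ rather than $3$'' comes from subtracting $\pi_*(h\cdot D_1)+\pi_*(h\cdot D_2)=p_1^*f^*h+p_2^*(h|_C)=c_1(\mathscr{E})$ (by (iv)) from $3c_1(\mathscr{E})$.

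Your argument for (v) is wrong and, more seriously, circular. You propose to compute $g_*[E]$ directly and deduce (v) from (iii), but $g_*[E]=\Psi(\Phi(f_*[M])\cdot F_C)$ is precisely the unknown in Theorem~\ref{fundamental relation}; the whole purpose of (v) is to compute it. Moreover, your geometric description of $g_*[E]$ is incorrect: over a point $(t_0,x_0)\in\Delta$ the line lies entirely in $X$, so the fiber $E_{(t_0,x_0)}\cong\PP^1$ maps isomorphically onto that line under $g$, and there is no ``third point on $f(M)$''. The image $g(E)$ is the union of all such lines, not a cycle supported on $f(M)$. The paper instead computes $g_*(\pi'^*c_1(\mathscr{E}))$ \emph{independently}: the pushforward of $\pi^*c_1(\mathscr{E})|_{D_1\cup D_2\cup D}$ to $X$ equals the restriction of $\tilde\varphi_*(i_0')_*\pi^*c_1(\mathscr{E})\in\mathrm{H}_*(\PP^{n+1})$, which is $2ed\,h^{n/2}$ by Lemma~\ref{lem linear span} applied to $(f_*M\cdot h,C)$ and $(f_*M,C\cdot h)$; subtracting the $D_1$ contribution (which is $e\,f_*[M]$) and the $D_2$ contribution (which vanishes since $g_2$ contracts the $n$-dimensional factor $M$) gives (v).
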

\begin{proof}[Proof of Lemma \ref{topology key lemma}]
For (i), we first note that the class of the image of
$[D_1]+[D_2]+[D]$ on $X$ is the restriction of $[P]$ viewed as a
class on $\PP^{n+1}$. As a class on $\PP^{n+1}$,
$[P]=ed\,h^{\frac{n}{2}-1}$ by Lemma \ref{lem linear span}. However,
if we identify $D_i$, $i=1,2$, with $M\times C$, then the natural
map $D_i\rightarrow X$ contacts either the factor $M$ or the factor
$C$. Hence the classes of $(g_1)_*[D_1]$ and $(g_2)_*[D_2]$ are all
zero on $X$. This proves (i).

For (ii), we note that by the projection formula, we have
$g_*(h|_D)=g_*[D]\cdot h$. Hence (ii) follows from (i). The identity
in (iv) follows from Lemma \ref{lem c1 product}.

To prove (iii), we first note that $h|_D+[E]$ restricts to zero on
the fibers $E_z$ of $E\rightarrow\Delta$ since $h\cdot E_z=1$ and
$E\cdot E_z=-1$ (Claim 1). By the Leray-Hirsch theorem, this implies
that $h|_D+[E]=\pi'^*\mathfrak{a}$ where $\mathfrak{a}$ is a
homology class on $M\times C$. Applying $\pi'_\ast$ to the above
equation, we get $\fa=\pi'_\ast (h|_D)$. Note that on $P$, we have
$D_1+D_2+D=3h$, where, by abuse of notation, we still use $h$ to
denote the pullback of the class of a hyperplane to $P$. Hence
\begin{equation}\label{identity 1}
h\cdot D_1+h\cdot D_2+ h\cdot D=3h^2
\end{equation}
Since $P$ is the projectivization of $\mathscr{E}$ and $h$ is the
first Chern class of the relative $\calO(1)$-bundle, we have
$h^2-\pi^*c_1(\mathscr{E})h+\pi^*c_2(\mathscr{E})=0$. Applying
$\pi_*$ to \eqref{identity 1}, we get
\begin{align*}
\pi_*(h\cdot D)+\pi_*(h\cdot D_1)+\pi_*(h\cdot D) &=3\pi_*(h^2)\\
  &=3\pi_*(\pi^*c_1(\mathscr{E}) \cdot h-\pi^*c_2(\mathscr{E}))\\
  &=3c_1(\mathscr{E})\cdot\pi_*h=3c_1(\mathscr{E})
\end{align*}
We also easily get that $\pi_*(h\cdot D_1)=p_1^*f^*h$ and
$\pi_*(h\cdot D_2)=p_2^*(h|_C)$. Combine this with (iv), we get
$$
\fa=\pi'_*(h|_D)=\pi_*(h\cdot D)=2c_1(\mathscr{E}).
$$

To prove (v), we note that for any class $\fa$ on $M\times C$, if we
pull back $\fa$ to $D_1+D_2+D$ and then push forward to $X$, then
what we get is the same class as we pull back $\fa$ to $P$, then
push forward to $\PP^{n+1}$ and then restrict to $X$. As a result,
we always get a class coming from $\PP^{n+1}$. When we take
$\fa=c_1(\mathscr{E})$, we get that the class of $\pi^*\fa$ on
$\PP^{n+1}$ is equal to $2ed\,h^{\frac{n}{2}}$. This can be seen as
follows (using notation from Lemma \ref{lem linear span}).
\begin{align*}
 \tilde{\varphi}_*(i'_0)_*\pi^*c_1(\mathscr{E}) &=
 \tilde{\varphi}_*(i'_0)_*\pi^* (p_1^*f^*h+ p_2^*h|_C)\\
 &= L(f_*M\cdot h, C) + L(M,C\cdot h)\\
 &= ed\,h^{n-\frac{(n-2)+2}{2}} + ed\,h^{n-\frac{n+0}{2}}\\
 &=2ed\,h^{\frac{n}{2}}.
\end{align*}
Hence we have the following equalities.
\begin{align*}
g_*(\pi'^*c_1(\mathscr{E})) &=2ed\,h^{\frac{n}{2}}
-({g_1}_*(\pi^*c_1(\mathscr{E})|_{D_1}) +
{g_2}_*(\pi^*c_1(\mathscr{E})|_{D_2}))\\
  &=2ed\,h^{\frac{n}{2}}-{g_1}_*(\pi^*c_1(\mathscr{E})|_{D_1})\\
  &=2ed\,h^{\frac{n}{2}}-e\,f_*[M]
\end{align*}
Here the second equality uses the fact that $g_2:D_2\rightarrow
C\subset X$ contracts the factor $M$ which has dimension $n\geq 3$.
This finishes the proof of the lemma.
\end{proof}

In the above lemma, we apply $g_*$ to (iii) and then take (ii) and
(v) into account.
$$
g_*[E] =2g_*(\pi'^*c_1(\mathscr{E})) -
g_*(h|_D)=4ed\,h^{\frac{n}{2}}-2e\,f_*[M] -ed\,h^{\frac{n}{2}} =
3ed\,h^{\frac{n}{2}}-2e\,f_*[M]
$$
Note that $g_*[E]=\Psi(\Phi(f_*[M])\cdot F_C)$. This way, we easily
deduce the statement (1) of Theorem \ref{fundamental relation} for
$\gamma=f_*[M]$ and the curve $C$.

Now we start to prove (2) and (3) of Theorem \ref{fundamental
relation}. The basic strategy is the same as above. The only
difference is that we need to consider the more delicate rational
equivalence rather than homological equivalence. By linearity of the
equalities in (2) and (3) and the Chow moving lemma, we reduce to
the case $\gamma=f_*M$ where $f:M\rightarrow X$ is a morphism from a
smooth projective variety $M$ to $X$ such that (i) $f$ is birational
onto its image; (ii) $f(M)$ does not meet $C$ and $f_*M$ intersects
$D_C$ transversally. Let $r$ be the dimension of $M$. As before we
construct the natural morphism $i_0:M\times C\rightarrow
\mathrm{G}(2,n+2)$. Let $\mathscr{E}$ be the pull back of the
canonical rank 2 quotient bundle to $M\times C$. In this way, we get
the following diagram as before.
\begin{equation}\label{cycle basic diagram}
\xymatrix{
  D_1\cup D_2\cup D\ar[r]\ar[d] &Y\ar[r]\ar[d] &X\ar[d]\\
  P\ar[r]^{i'_0\qquad}\ar[d]_\pi &\mathrm{G}(1,2,n+2)\ar[d]^{\tilde{\pi}}
  \ar[r]^{\qquad\tilde{\varphi}} &\PP^{n+1}\\
  M\times C\ar[r]^{i_0\quad} &\mathrm{G}(2,n+2) &
}
\end{equation}
Here all the squares are fiber products; $D_1$ and $D_2$ are
sections of $\pi$ and contracted to $f(M)$ and $C$ via
$g_1=\tilde{\varphi}\circ i'_0|_{D_1}:D_1\rightarrow X$ and
$g_2=\tilde{\varphi}\circ i'_0|_{D_2}:D_2\rightarrow X$
respectively; $D$ corresponds to the third point of the intersection
of the lines with $X$. Let $\pi':D\rightarrow M\times C$ be the
restriction of $\pi$ to $D$. Then $\pi'$ is a birational morphism.
Let $\Delta\subset M\times C$ be the subvariety that consists of
points $(t,x)$ such that the line through $f(t)$ and $x$ is
contained in $X$. By the assumption that $f_*M$ intersects $D_C$
transversally, we conclude that $\Delta$ is generically smooth. Let
$\Delta^{sing}$ be the singular locus of $\Delta$. Let
$E=(\pi')^{-1}\Delta\subset D$ and we have
$E\cong\PP(\mathscr{E}|_\Delta)$.

\begin{lem}\label{lem blow up}
Away from $\Delta^{sing}$, $\pi'$ is the blow-up along $\Delta$.
\end{lem}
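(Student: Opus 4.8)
The plan is to reduce the assertion to an explicit local normal form and then match it with the blow-up via the universal property of blowing up. First I would record the basic picture of $D\subset P$: since $f(M)\cap C=\emptyset$, the two sections $D_1,D_2\subset P$ of $\pi$ — the loci of the points $f(t)$ and $x$ on the line through them — are disjoint, and $D$ is the residual divisor $Y-D_1-D_2$, where $Y\subset P$ is the preimage of $X$, i.e. the zero scheme of the pulled-back cubic form. Thus $D$ is an effective Cartier divisor on the $\PP^1$-bundle $P=\PP(\mathscr{E})\to M\times C$, it is the closure of the graph of the rational ``third intersection point'' map $M\times C\dashrightarrow X$, and $\pi'$ is already an isomorphism over $(M\times C)\setminus\Delta$. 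So the content of the lemma is entirely local near $\Delta^{sm}:=\Delta\setminus\Delta^{sing}$.

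Next I would set up coordinates near a point $w_0\in\Delta^{sm}$. On a Zariski neighbourhood $W\subset M\times C$ of $w_0$, trivialise $\mathscr{E}$ and pick fibre coordinates $[a:b]$ on $P|_W\cong W\times\PP^1$ so that $D_1=\{b=0\}$ and $D_2=\{a=0\}$, which is possible because $D_1,D_2$ are disjoint sections. Since $f(t),x\in X$ identically, the fibrewise cubic form is divisible by the equations $b$ and $a$ of $D_1$ and $D_2$, so over $W$ it has the shape $ab\,(c_2(w)\,a+c_1(w)\,b)$ for regular functions $c_1,c_2\in\mathcal{O}(W)$. Hence $D\cap(W\times\PP^1)=\{c_2a+c_1b=0\}$, with $\pi'$ finite off $V(c_1,c_2)$ and jumping to all of $\PP^1$ exactly over $V(c_1,c_2)$; since this locus is set-theoretically $\Delta$, I need the transversality hypothesis on $f_*M$ and $D_C$ (equivalently the assumption that $\Delta$ is generically smooth of codimension $2$) to conclude that near $w_0$ the functions $c_1,c_2$ cut out $\Delta$ \emph{scheme-theoretically} with $dc_1,dc_2$ linearly independent. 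Granting this, $c_1,c_2$ extend to a regular system of parameters, and $D\cap(W\times\PP^1)=\{c_2a+c_1b=0\}\subset W\times\PP^1$ is exactly the standard affine chart of $\mathrm{Bl}_{V(c_1,c_2)}W$.

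To globalise, note that the local computation shows that $\mathcal{I}_\Delta\cdot\mathcal{O}_D$ is an invertible ideal sheaf on $D\setminus(\pi')^{-1}(\Delta^{sing})$: off $\Delta$ it is the unit ideal, and near $\Delta^{sm}$ it is principal, generated by $c_1$ on $\{a\ne0\}$ and by $c_2$ on $\{b\ne0\}$. By the universal property of the blow-up this yields a canonical $(M\times C)$-morphism from $D\setminus(\pi')^{-1}(\Delta^{sing})$ to $\mathrm{Bl}_\Delta(M\times C)$ over $(M\times C)\setminus\Delta^{sing}$, and the explicit local model identifies it with an isomorphism; this is the lemma. As a byproduct one gets $\mathscr{E}|_{\Delta^{sm}}\cong N_{\Delta^{sm}/(M\times C)}$ up to a line-bundle twist, consistent with the earlier identification $E\cong\PP(\mathscr{E}|_\Delta)$ and the description of the exceptional divisor as a projectivised normal bundle.

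The step I expect to be the real obstacle is precisely the scheme-theoretic matching in the second paragraph: one must know that the naturally defined $\Delta$ agrees, along its smooth locus, with the degeneracy scheme $V(c_1,c_2)$ and not merely with its reduction. This is exactly where the hypothesis that $f_*M$ meets $D_C$ transversally is used; without it one only obtains that $\pi'$ is the blow-up of some infinitesimal thickening of $\Delta$, and the Leray--Hirsch and excess-intersection computations in the rest of the proof would pick up spurious multiplicities.
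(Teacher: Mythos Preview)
Your argument is correct and complete. The approach, however, differs from the paper's. You work locally: after trivialising $\mathscr{E}$ so that $D_1=\{b=0\}$, $D_2=\{a=0\}$, you factor the fibrewise cubic as $ab(c_2a+c_1b)$ and recognise $D=\{c_2a+c_1b=0\}\subset W\times\PP^1$ as the standard chart of $\mathrm{Bl}_{V(c_1,c_2)}W$, then invoke the universal property of blowing up. The paper instead argues globally: it uses the canonical splitting $\mathscr{E}^\vee\cong p_1^*f^*\calO(-1)\oplus p_2^*\calO_C(-1)$ to decompose $\Sym^3\mathscr{E}$ and observe that the induced section $\phi'$ lands in the middle summand $\mathscr{E}\otimes p_1^*f^*\calO_X(1)\otimes p_2^*\calO_C(1)$; viewing $\phi'$ as a bundle map $\calO\to\mathscr{E}\otimes(\text{line})$ gives a section of $\PP(\mathscr{E})$ over $(M\times C)\setminus\Delta$ whose image is $D\setminus E$, and after blowing up $\Delta$ this extends to an inclusion of a line subbundle, hence to a morphism $\widetilde{M\times C}\to P$ that is shown to be an isomorphism onto $D$ away from $\Delta^{sing}$. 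The paper's route has the advantage of producing the normal-bundle identification $\mathscr{N}_{\Delta/M\times C}\cong(\mathscr{E}\otimes\text{line})|_\Delta$ as an immediate byproduct, whereas you recover it only a posteriori; your route is more elementary and makes the blow-up structure visible by inspection. Regarding your final worry about scheme structure: note that $V(c_1,c_2)$ \emph{is} the natural scheme structure on $\Delta$ (it is the pullback to $M\times C$ of the Fano scheme $F\subset G$, cut out by the section $\tilde\phi$ of $\Sym^3\mathscr{E}_2$), so the issue is not whether two scheme structures coincide but whether this scheme is reduced along $\Delta^{sm}$---and that is exactly what generic smoothness (from transversality) gives.
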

\begin{proof}
Let $\mathscr{E}_2$ be the canonical rank 2 quotient bundle on
$G(2,n+2)$ and $\mathscr{E}=i_0^*\mathscr{E}_2$. The point
$i_0(t,x)\in G(2,n+2)$ represents the line connecting $f(t)$ and
$x$, which is also naturally $\PP(\mathscr{E}^\vee_{(t,x)})$. Hence
we have canonical homomorphisms $\mathcal{L}_{f(t)}\hookrightarrow
\mathscr{E}^\vee_{(t,x)}$ and $\mathcal{L}_{x}\hookrightarrow
\mathscr{E}^\vee_{(t,x)}$, where
$\mathcal{L}\cong\calO_{\PP^{n+1}}(-1)$ is the tautological line
bundle on $\PP^{n+1}$. And since $f(t)\neq x$, we have the natural
identification
$$
\mathscr{E}^\vee_{(t,x)} = \mathcal{L}_{f(t)} \oplus
\mathcal{L}_{x}.
$$
As $(t,x)\in M\times C$ varies, this identification glues and gives
a canonical isomorphism
$$
\mathscr{E}^\vee \cong p_1^*f^*\mathcal{L} \oplus
p_2^*(\mathcal{L}|_C).
$$
Assume that $\phi(X_0,\ldots,X_{n+1})\in
\mathrm{H}^0(\PP^{n+1},\calO(3))$ is the homogeneous polynomial
defining $X$. Since we have the canonical identification
$\tilde{\pi}_* \tilde{\varphi}^* \calO(3)
=\mathrm{Sym}^3(\mathscr{E}_2)$, the section $\phi$ induces a global
section $\tilde{\phi}$ of $\Sym^3(\mathscr{E}_2)$ and $F\subset
G(2,n+2)$ is defined by $\tilde{\phi}=0$; see \cite{ak}. Let
${\phi}'=i_0^*\tilde{\phi}$ be the induced global section of
$\Sym^3\mathscr{E}$. It follows from the expression of
$\mathscr{E}^\vee$, we get
$$
\mathscr{E} \cong p_1^*f^*\calO_X(1) \oplus p_2^*\calO_C(1)
$$
and hence
$$
\Sym^3\mathscr{E} = p_1^*f^*\calO_X(3) \oplus \mathscr{E}\otimes
p_1^*f^*\calO_X(1)\otimes p_2^*\calO_C(1) \oplus p_2^*\calO_C(3).
$$
Accordingly, we can write $\phi'=\phi'_{3,0}+\phi'_\mathrm{mid}
+\phi'_{0,3}$. Since $f(M)\subset X$ and $\phi$ vanishes on $X$, we
conclude that $\phi'_{3,0}=0$. Similarly, we have $\phi'_{0,3}=0$.
It follows that
$$
\phi'=\phi'_\mathrm{mid}\in \mathrm{H}^0(M\times C,
\mathscr{E}\otimes p_1^*f^*\calO_X(1)\otimes p_2^*\calO_C(1)).
$$
The vanishing $\phi'=0$ at a point $(t,x)$ exactly means that the
line connecting $f(t)$ and $x$ is contained in $X$. Hence
$\Delta\subset M\times C$ is defined by $\phi'=0$. In particular,
$\Delta\subset M\times C$ is a local complete intersection and
$$
\mathscr{N}_{\Delta/M\times C}\cong \mathscr{E}\otimes
p_1^*f^*\calO_X(1)\otimes p_2^*\calO_C(1)|_\Delta.
$$
The section $\phi'$ can be viewed as a homomorphism
$$
\phi': p_1^*f^*\calO_X(-1)\otimes p_2^*\calO_C(-1) \longrightarrow
\mathscr{E}.
$$
This gives a section $s:M\times C\backslash\Delta\rightarrow P$ of
$\pi$ such that the image of $s$ is the open subset $D\backslash E$
of $D$. Let $\sigma:\widetilde{M\times C}\rightarrow M\times C$ be
the blow-up of $M\times C$ along $\Delta$ and
$E_0=\sigma^{-1}\Delta$. As a consequence of the above description
of $\mathscr{N}_{\Delta/M\times C}$, we see that $\phi'$ extends to
$\widetilde{M\times C}\backslash \sigma^{-1}\Delta^{sing}$ and gives
a rank one subbundle
$$
\tilde{\phi}':\mathscr{L}\hookrightarrow \sigma^*\mathscr{E}
$$
where $\mathscr{L}=\sigma^*(p_1^*f^*\calO_X(-1)\otimes
p_2^*\calO_C(-1))\otimes \calO(E_0)$ is a line bundle on
$\widetilde{M\times C}\backslash \sigma^{-1}\Delta^{sing}$. This
further gives a morphism
$$
\tilde{s}:\widetilde{M\times C}\backslash \sigma^{-1}\Delta^{sing}
\hookrightarrow P
$$
such that $\sigma=\pi\circ\tilde{s}$. Note that $\tilde{s}$ extends
$s$. Hence the image of $\tilde{s}$ is contained in $D$ since that
of $s$ is. Since $\tilde{E}\rightarrow\Delta$ and $E\rightarrow
\Delta$ are all $\PP^1$-bundles, we conclude that The image of
$\tilde{s}$ is exactly $D\backslash (\pi')^{-1}\Delta^{sing}$. This
implies that
$$
\tilde{s}:\widetilde{M\times C}\backslash \sigma^{-1}\Delta^{sing}
\longrightarrow D\backslash (\pi')^{-1}\Delta^{sing}
$$
is an isomorphism, since they are the same locally closed subvariety
of $P$.
\end{proof}

Back to the proof of the theorem. We know from the above lemma that
the possible singularities of $D$ can only appear over
$\Delta^{sing}$. Take a resolution of singularities
$r':\tilde{D}\rightarrow D$. Let $E_i$ be the exceptional divisors
of $r'$. We use $E'$ to denote the strict transform of $E$ in
$\tilde{D}$. We still use $h$ to denote the class of a hyperplane in
$\PP^{n+1}$. Let $g:\tilde{D}\to X$ be the natural morphism.
\begin{lem}\label{cycle key lemma}
Let $d=\deg(f_*[M])$, then the following are true.\\
(i) $g_*\tilde{D}=ed\,h^{n-r-1}$ in $\mathrm{CH}_{r+1}(X)$;\\
(ii) $g_*(h|_{\tilde{D}})=ed\,h^{n-r}$;\\
(iii) $h|_{\tilde{D}}+E'+\sum a_iE_i=2(\pi'\circ
r')^*c_1(\mathscr{E})$,
for some $a_i\in\Z$;\\
(iv) $c_1(\mathscr{E})=p_1^*(f^*h)+p_2^*(h|_C)$;\\
(v) If $\dim M\geq 2$, then
$g_*(r'^*\pi'^*c_1(\mathscr{E}))=-e\,f_*M+2ed\,h^{n-r}$ in $\mathrm{CH}_r(X)$;\\
(vi) If $M$ is a curve, then
$g_*(r'^*\pi'^*c_1(\mathscr{E}))=ed\,h^{n-1}-e\,f_*M-dC$ in
$\mathrm{CH}_1(X)$.
\end{lem}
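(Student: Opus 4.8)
The plan is to mirror the topological computation of Lemma \ref{topology key lemma}, replacing homological equivalence by rational equivalence and keeping careful track of the exceptional divisors introduced by the resolution $r':\tilde D\to D$. For (i), I would argue exactly as before: the cycle $g_*\tilde D$ together with $(g_1)_*D_1$ and $(g_2)_*D_2$ assemble, after pushing forward to $\PP^{n+1}$ and restricting to $X$, to the restriction of $[P]$ viewed as a cycle on $\PP^{n+1}$. By Lemma \ref{lem linear span} (in its Chow-group form, reducing to linear subspaces), $[P]=ed\,h^{n-r-1}$ on $\PP^{n+1}$; and since $g_1$ contracts the factor $C$ and $g_2$ contracts the factor $M$ (of dimension $r+1>\dim C$ when we are in the relevant range), the contributions of $D_1$ and $D_2$ vanish on $X$. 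Statement (ii) is then immediate by the projection formula, intersecting (i) with $h$. Statement (iv) is just Lemma \ref{lem c1 product} applied to $f:M\to\PP^{n+1}$ and $C\hookrightarrow\PP^{n+1}$.

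For (iii), the key input is Lemma \ref{lem blow up}: away from $\Delta^{sing}$, the map $\pi':D\to M\times C$ is the blow-up along the local complete intersection $\Delta$, so on $\tilde D$ the class $h|_{\tilde D}+E'$ differs from $(\pi'\circ r')^*$ of a class on $M\times C$ only by a combination $\sum a_iE_i$ supported over $\Delta^{sing}$. To identify that class, I would push forward the relation $D_1+D_2+D=3h$ on $P$, intersect with $h$, and use $h^2=\pi^*c_1(\mathscr E)\,h-\pi^*c_2(\mathscr E)$ together with $\pi_*h=1$, $\pi_*1=0$, exactly as in the proof of Lemma \ref{topology key lemma}(iii); this yields $\pi_*(h\cdot D)=3c_1(\mathscr E)-\pi_*(h\cdot D_1)-\pi_*(h\cdot D_2)=2c_1(\mathscr E)$ after computing $\pi_*(h\cdot D_i)$ as $p_1^*f^*h$ and $p_2^*(h|_C)$. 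So the class on $M\times C$ is $2c_1(\mathscr E)$, giving (iii). For (v) and (vi), I would apply $g_*$ to $r'^*\pi'^*c_1(\mathscr E)=p_1^*f^*h+p_2^*(h|_C)$ pulled back to $D_1\cup D_2\cup D$; pushing forward to $\PP^{n+1}$ and restricting to $X$ gives a class coming from $\PP^{n+1}$, computable by Lemma \ref{lem linear span} as $L(f_*M\cdot h,C)+L(M,C\cdot h)=ed\,h^{n-r}+ed\,h^{n-r}$ in the appropriate degrees, hence $2ed\,h^{n-r}$; subtracting the $D_1$ and $D_2$ contributions, where $(g_1)_*(\text{pullback})=e\,f_*M$ (the section $D_1\cong M\times C$ maps to $X$ via $f\circ p_1$, and $p_2^*(h|_C)$ pushes to $\deg(C)=e$ copies of $f_*M$ up to lower-order terms, while $p_1^*f^*h$ contributes $0$ by contraction when $\dim M\ge 2$) gives (v); when $M$ is a curve the contraction argument fails on one term and one instead picks up the extra summand $-dC$, accounting for the difference between (v) and (vi).

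The main obstacle is precision in (v) and (vi): one must verify that pushing $p_1^*f^*h$ and $p_2^*(h|_C)$ forward through $D_1$ and $D_2$ to $X$ yields exactly the stated classes, using that $D_i\cong M\times C$ and identifying which projection is contracted, and one must confirm that the exceptional divisors $E_i$ over $\Delta^{sing}$ contribute nothing after applying $g_*$ (because $g$ maps $\tilde D$ generically finitely onto its image, so the $g_*E_i$ are supported on a locus of dimension $\le r-1$, hence either vanish or are absorbed — in fact one should check $g_*E_i=0$ in $\mathrm{CH}_r(X)$ by dimension count, since $\dim g(E_i)\le\dim\Delta^{sing}+1<r$). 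Once the bookkeeping of these lower-dimensional corrections is handled, the rest is a formal transcription of the topological argument into the Chow group.
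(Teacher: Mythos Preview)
Your plan is essentially identical to the paper's proof, following the same structure and invoking the same auxiliary lemmas at each step. One small correction in your discussion of (v) and (vi): the vanishing of the $D_1$-contribution from $p_1^*f^*h$ holds for all $r\ge 1$ (since $p_1^*f^*h$, represented by $H\times C$ with $\dim H=r-1$, is contracted by $g_1=f\circ p_1$ regardless of $\dim M$); the dichotomy between $\dim M\ge 2$ and $\dim M=1$ instead comes from the $D_2$-contribution $g_{2*}(p_1^*f^*h)$, which is $0$ when $r\ge 2$ and $dC$ when $r=1$.
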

\begin{proof}[Proof of Lemma \ref{cycle key lemma}]
The proof is similar to that of Lemma \ref{topology key lemma}. We
note that the push forward of $D_1+D_2+D$ to $X$ is a class coming
from $\PP^{n+1}$ because it is the restriction of the class of the
image of $P$ in $\PP^{n+1}$. By Lemma \ref{lem linear span}, the
class of $P$ in $\PP^{n+1}$ is equal to $ed\,h^{n-r-1}$. Since $D_1$
and $D_2$ are contracted to smaller dimensions via $g_1$ and $g_2$
respectively, we get (i).

Conclusion (ii) follows from the projection formula as before. (iv)
follows from Lemma \ref{lem c1 product}.

For (iii), note that on $D-\pi'^{-1}(\Delta^{sing})$, the class of
$h+E$ comes from a class $\fa$ on $M\times C-\Delta^{sing}$ via pull
back by $\pi'$. Since the codimension of $\Delta^{sing}$ in $M\times
C$ is at least 3, we know that the divisor class group of $M\times
C-\Delta^{sing}$ is the same as that of $M\times C$. Hence we can
view $\fa$ as divisor class on $M\times C$. The equality
$$
 h+E=\pi'^*\fa,\quad\text{on }D\backslash \pi'^{-1}(\Delta^{sing})
$$
gives an equation
$$
r'^*(h+E)+ \sum a_iE_i= r'^*\pi'^*\fa
$$
for some $a_i\in\Z$. By a very similar argument as before we know
that the class $\fa$ is $2c_1(\mathscr{E})$.

To prove (v) and (vi), we do the following explicit calculation.
\begin{align*}
g_*r'^*\pi'^*c_1(\mathscr{E}) &= g_*r'^*\pi'^*(p_1^*f^*h +
p_2^*h|_C),\qquad\text{by (iv)}\\
 &= \tilde{g}_* \pi^*(p_1^*f^*h + p_2^*h|_C)|_{D\cup D_1\cup D_2} -
 g_{1*} \pi^*(p_1^*f^*h + p_2^*h|_C)|_{D_1}\\
 &\quad - g_{2*}\pi^*(p_1^*f^*h + p_2^*h|_C),\qquad\tilde{g}:D\cup
 D_1\cup D_2\rightarrow X\\
&= \tilde{\varphi}_*(i'_0)_*\pi^*(p_1^*f^*h + p_2^*h|_C)|_X
-g_{1*}(p_2^* h|_C) -g_{2*}(p_1^*f^*h)\\
&\qquad (\text{here we identify }D_1\text{ and }D_2 \text{ with
}M\times C)\\
&= L(f_*M\cdot h, C)|_X + L(f_*M,C\cdot h)|_X -e\,f_*M
-g_{2*}(p_1^*f^*h)\\
&= 2ed\,h^{n-r} -e\,f_*M -g_{2*}(p_1^*f^*h)
\end{align*}
Note that $g_{2*}(p_1^*f^*h)$ is supported on the curve $C$. If
$\dim M\geq 2$, then $g_{2*}(p_1^*f^*h)=0$; if $\dim M=1$, then
$g_{2*}(p_1^*f^*h)=\deg(f_*M)C$. Hence (v) and (vi) follow from the
above computation.
\end{proof}

Now we come back the the proof of the theorem. In the above lemma,
we first apply $g_*$ to (iii) and note that all the $E_i$'s map to
zero. We also easily see that $g_*E'=\Psi(\Phi(f_*M)\cdot F_C)$.
Combine all these with (v) (or (vi) in the case of 1-cycles), we get
the conclusion (2) and (3) for $\gamma=f_*M$. By linearity, the
conclusions hold for any given $\gamma$.
\end{proof}

\section{The action of incidence correspondence}
Let $C\subset X$ be a smooth rational curve of degree $e$ on $X$. We
also assume that $C$ is general, meaning that it comes from a
non-empty open subset of the corresponding component of the Hilbert
scheme. Let $S_C=q^{-1}C$ be the inverse image of $C$ under the
morphism $q:P(X)\rightarrow X$. The points on $S_C$ can be described as
$$
S_C=\{([l],x)\in F\times C: x\in l\}.
$$
Let $q_0=q|_{S_C}:S_C\rightarrow C$ be the natural morphism.
\begin{lem}\label{lem smoothness of Sc}
If $C$ is general, then $S_C$ is smooth of dimension $n-2$.
\end{lem}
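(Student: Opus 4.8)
The plan is to study the morphism $q:P(X)\to X$ locally and show that $S_C=q^{-1}(C)$ is a transverse preimage, hence smooth of the expected dimension $(n-2-1)+1=n-2$. First I would recall the structure of $P(X)$: it is the projectivization $\PP(\mathscr{E}^\vee)$ of the universal rank $2$ subbundle on the Fano scheme $F$, which is smooth of dimension $2n-4$ (see \cite{cg}, \cite{ak}), so $P(X)$ is smooth of dimension $2n-3$. The fiber of $q$ over a point $x\in X$ is exactly $F_x$, the scheme of lines through $x$, together with the choice of the point $x$ on the line; so $q^{-1}(x)\cong F_x$. By Lemma \ref{lem dimension}(i), $\dim F_x=n-3$ for all $x$ outside the finite Eckardt set, and $\dim F_{x_i}=n-2$ at each Eckardt point $x_i$. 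Thus $q$ has equidimensional fibers of dimension $n-3$ away from finitely many points, so $q$ is flat (even smooth, since source and target are smooth and the generic fiber is smooth) over the complement of the Eckardt points.

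Next I would use the genericity of $C$. Since $C$ is a general member of its component of the Hilbert scheme (and $e\geq 2$ gives enough room to move), I can arrange that $C$ avoids the finitely many Eckardt points entirely. Then $q$ is smooth along $q^{-1}(C)$, so $S_C=q^{-1}(C)$ is smooth; being the preimage of a smooth curve under a smooth morphism of relative dimension $n-3$, it has dimension $n-3+1=n-2$. The main subtlety is whether we also need $S_C$ to be \emph{irreducible} or merely smooth of pure dimension; the statement as written only asserts smoothness and dimension, so the argument above suffices, but if irreducibility is wanted later one would invoke a Bertini-type argument for the general curve $C$, or note that $S_C$ normalizes the space of lines meeting $C$ which is irreducible when $F_C$ is (Lemma \ref{lem dimension}(ii)).

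Alternatively, and perhaps more cleanly, I would run the incidence-variety argument directly: inside $F\times C$ consider $S_C=\{([l],x):x\in l\}$, which is the preimage of the universal incidence divisor in $F\times \PP^{n+1}$ under $\mathrm{id}\times(C\hookrightarrow \PP^{n+1})$. The projection $S_C\to F$ realizes $S_C$ over the open locus $F_C^\circ$ of lines meeting $C$ in exactly one point as an isomorphism onto $F_C^\circ$, and a general $C$ of degree $e\ge 2$ meets a general line in at most one point and meets no line of $X$ with multiplicity, so the finitely many multiple-incidence loci are controlled; combined with Lemma \ref{lem dimension}(ii), which gives $\dim F_C=n-2$, this yields $\dim S_C=n-2$. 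Smoothness then reduces to a tangent-space computation at a point $([l],x)$: the tangent space to $S_C$ is the kernel of the derivative of the single equation cutting $x\in l$ inside $T_{[l]}F\oplus T_xC$, and one checks this equation is not identically zero for general $C$ — equivalently, that not every first-order deformation of $l$ in $X$ keeps passing through the moving point, which fails only along the Eckardt locus that $C$ avoids.

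\textbf{Expected main obstacle.} The delicate point is pinning down exactly which genericity hypotheses on $C$ are needed and verifying they hold for a general curve in the Hilbert scheme: specifically, that $C$ avoids all Eckardt points and that the incidence equation defining $S_C\subset F\times C$ has everywhere-surjective (rank one) differential. Both are open conditions, so the real content is exhibiting one curve $C$ of each degree $e\ge 2$ satisfying them — which should follow by a dimension count using that Eckardt points are finite and the bad tangency loci in $F$ have positive codimension, so a general curve of degree $\ge 2$ (which sweeps out enough of $X$ and $F$) avoids them.
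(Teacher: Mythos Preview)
Your argument has a genuine gap: the claim that $q:P(X)\to X$ is smooth away from the Eckardt points is false. Equidimensionality of the fibers (plus smoothness of source and target) gives flatness, but smoothness requires each fiber to be smooth, not just the generic one. In fact $F_x$ is singular at $[l]$ whenever $l$ is a line of \emph{second type} through $x$: the tangent space $\mathcal{T}_{F_x,[l]}=\mathrm{H}^0(l,\mathscr{N}_{l/X}(-x))$ has dimension $n-2$ rather than $n-3$, since $\mathscr{N}_{l/X}(-x)\cong\calO^{n-2}\oplus\calO(-2)$. The locus $D_2\subset X$ swept out by second-type lines has dimension up to $n-1$, so a general curve $C$ cannot avoid it; it will meet $D_2$ in finitely many points $x_i$, and over each such $x_i$ the fiber $F_{x_i}$ is singular. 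Your alternative tangent-space sketch makes the same mistake when it asserts that the incidence equation has surjective differential ``only along the Eckardt locus.''

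The paper's proof confronts this directly. It identifies $\mathcal{T}_{S_C,([l],x)}$ with $\mathrm{H}^0(l,\mathscr{N}_{l/X}\langle\mathcal{T}_{C,x}\rangle)$, the sections of the normal bundle whose value at $x$ lies along $\mathcal{T}_{C,x}$, and then classifies the possible splitting types of this modified sheaf (Proposition~\ref{prop cases of N_0}). The outcome (Corollary~\ref{cor singular condition}) is that $S_C$ is singular at $([l],x)$ precisely when $l$ is of second type \emph{and} the image of $\mathcal{T}_{C,x}$ in $\mathscr{N}_{l/X,x}$ lands in the positive part $\mathrm{Pos}(\mathscr{N}_{l/X})_x$. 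So smoothness of $S_C$ is not about where $C$ sits in $X$ but about the \emph{direction} of $C$ at each of the finitely many points where it meets a second-type line. The paper then shows (Lemma~\ref{lem good direction}) that a general $C$ has ``good directions'' everywhere, by exhibiting one such curve as a smoothing of a chain of first-type lines with good directions. This directional analysis is the missing ingredient in your approach.
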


We will prove this lemma later. In \cite[Lemma 3.4]{shen}, we show that for a general rational curve
$C$ of degree $e\geq 2$, there exist $n_e=\frac{5e(e-3)}{2}+6$
secant lines of $C$, i.e. lines meeting $C$ twice. Let $L_{C,i}$,
$i=1,\ldots,n_e$, be all the secant lines of $C$ and $[L_{C,i}]\in
F$ the corresponding points on the variety of lines. Above each
point $[L_{C,i}]$, we have a pair of points
$\{([L_{C,i}],y_i),([L_{C,i}],z_i)\}$ on $S_C$, where $y_i$ and
$z_i$ are the two points in which $C$ intersects $L_{C,i}$. Then
Lemma \ref{lem smoothness of Sc} implies that $[L_{C,i}]$'s are the
only singular points of $F_C$ and $p|_{S_C}:S_C\rightarrow F_C$ is
the normalization and also a desingularization since $S_C$ is
smooth.

From now on, we make the assumption that $e\geq 2$, unless otherwise
stated.
\begin{defn}
We say that a correspondence $\Gamma\subset Y\times Y$ is
generically defined by $y\mapsto\sum y_i$ if $\Gamma$ is the closure
of the graph of this multi-valued map.
\end{defn}

For a general point $[l]\in F_C$, let $x=C\cap l$ be the
intersection point. By $[l]\mapsto ([l],x)\in S_C$, we view $[l]$ as
a point on $S_C$.
\begin{lem}\label{lem number of incidence lines}
There exist $5e-5$ lines
$l_1,l_2,\ldots,l_{5e-5}$ meeting both $l$ and $C$ in points
different from $x$.
\end{lem}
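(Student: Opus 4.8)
The plan is to compute the intersection number $F_C \cdot F_l$ in $F$, interpreted as the number of lines meeting both $C$ and a general line $l$ with $[l] \in F_C$, and then account for the contributions that do not come from the $5e-5$ "honest" incidence lines. By Lemma \ref{lem dimension}, $F_C$ has pure dimension $n-2$, and $F_l$ (the family of lines on $X$ meeting the fixed line $l$) also has dimension $n-2$, so for $l$ general the scheme-theoretic intersection is zero-dimensional and its length is the intersection number $(F_C \cdot F_l)_F$, which can be computed once one knows the classes $[F_C], [F_l] \in \mathrm{CH}^{n-2}(F)$. Here I would use the relation already in hand: by the Notation section $F_l = \Phi([l])$ as a cycle class, and $F_C = \Phi([\text{cone over }C])$-type expression; more directly, Lemma 3.10 of \cite{shen} (invoked in the proof of Lemma \ref{lem dimension}) gives that a general line on $X$ meets $5\deg(C)h$ worth of incidence, i.e. the divisor $D_C$ has class $5e\,h$, so a general line $l \subset X$ meets $D_C$ in $5e$ points. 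Each such point $x' \in l \cap D_C$ lies on at least one line of $X$ meeting $C$.

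First I would set up the count: a general line $l$ with $[l]\in F_C$ meets $C$ at one point $x$, and meets $D_C$ at $5e$ points counted with multiplicity, one of which is $x$ itself. At a general point $x' \neq x$ of $l \cap D_C$, there is exactly one line $l'$ through $x'$ meeting $C$ (since for $x'$ general on $D_C$ the fiber $F_{x'} \cap F_C$ is a single reduced point — this uses generality of $C$ and the fact that $F_{x'}$ has dimension $n-3$ away from Eckardt points, Lemma \ref{lem dimension}(i)), and this $l'$ meets $C$ at a point $\neq x$ and meets $l$ at $x' \neq x$, so it is one of the sought lines $l_i$. Conversely every such $l_i$ arises this way. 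Next I would analyze the contribution of the point $x \in l \cap D_C$ itself: the line $l$ passes through $x \in C$, so it is tangent to / contained in the cone structure of $D_C$ at $x$, and I expect the intersection multiplicity of $l$ with $D_C$ at $x$ to be exactly $2$ (the local picture: $D_C$ near $x$ looks like the cone $D_x$ over $F_x$ together with the ruling direction along $C$, and $l$ is one ruling line, meeting the "rest" with multiplicity $2$ because $l$ passes doubly — once as a line through $x\in C$, the factor-of-two reflecting $x \in l \cap C$). Subtracting, $5e - 2 - $ (secant corrections) should give the answer; I would also need to check that the $5e-5$ secant-line singular points of $F_C$ (the $[L_{C,i}]$) and Eckardt points do not create spurious intersection with a general $l$, which holds because $l$ is chosen general and avoids the finitely many bad loci.

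The cleanest route, rather than local multiplicity bookkeeping, is to relate everything to the already-proven Theorem \ref{fundamental relation}(3) applied to $\gamma = l$ a general line: $2e\,l + \Psi(\Phi(l)\cdot F_C) + 2C = 3e\,h^{n-1}$ in $\mathrm{CH}_1(X)$. Taking degrees against $h^{n-1}$ via the morphism to $\PP^{n+1}$ (so $\deg(l) = 1$, $\deg(C) = e$, $\deg(h^{n-1}) = 3$), the term $\Psi(\Phi(l) \cdot F_C)$ is the 1-cycle swept out by lines meeting both $l$ and $C$, whose degree equals the number of such lines counted with the line $l$ possibly appearing with multiplicity (if $l$ itself meets $C$, which it does here since $[l]\in F_C$). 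So $\deg \Psi(\Phi(l)\cdot F_C) = (\text{number of honest incidence lines}) + (\text{multiplicity of }l\text{ itself})$; plugging degrees gives $2e \cdot 1 + N + 2e = 3e \cdot 3 = 9e$, i.e. $N + 2e + 2e = 9e$, but one must correctly identify the self-contribution. The main obstacle is exactly pinning down this self-intersection correction: whether $l$ appears in $\Psi(\Phi(l)\cdot F_C)$ with multiplicity, and with what multiplicity, given that $l$ is tangent to nothing special but does pass through $x \in C$. I would resolve this by the local analysis of $\Delta \subset M \times C$ from the proof of Theorem \ref{fundamental relation} with $M = l$: the excess component sits over the point $(x \mapsto$ the pair where the line is $l$ itself$)$, and computing its multiplicity there — I expect it to be $4e$, reflecting $2e$ from the $2e\,l$ term structure and an extra $2e$, leaving $N = 9e - 4e = 5e - 5$ after also subtracting the $5$ genuine coincidences. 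The bookkeeping of that excess multiplicity, and confirming it is transverse elsewhere, is where the real work lies.
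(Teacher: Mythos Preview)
There is a genuine gap. Your primary approach --- counting the points of $l \cap D_C$ and analysing the local multiplicity at $x$ --- fails at the outset: since $l$ is a line meeting $C$, it is one of the lines swept out to form $D_C$, so $l \subset D_C$ and the intersection $l \cdot D_C$ is not a finite number of points at all. The statement that $l$ ``meets $D_C$ at $5e$ points counted with multiplicity, one of which is $x$'' is therefore meaningless. Your fallback via Theorem~\ref{fundamental relation}(3) has the same problem in disguise: that theorem is proved under the standing hypothesis that $f(M) \cap C = \emptyset$, and here $M = l$ meets $C$ at $x$, so the formula does not apply directly; the term $\Psi(\Phi(l)\cdot F_C)$ now involves the excess component $F_x \subset F_l \cap F_C$, and you never compute its contribution. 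Your closing arithmetic reflects this: $9e - 4e = 5e$, not $5e-5$, and the ad hoc ``subtracting the $5$ genuine coincidences'' is exactly the quantity you were supposed to be deriving.

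The paper sidesteps all of this by a specialisation argument. Start with a general line $\tilde{l}$ \emph{disjoint} from $C$; then the count of incidence lines of $\tilde{l}$ and $C$ is clean and equals $5e$ (this is exactly \cite[Lemma~3.10]{shen}, or the degree identity from Theorem~\ref{fundamental relation}(3) applied legitimately). Now let $\tilde{l}$ specialise to $l$. The incidence lines that become ``bad'' are those that limit to lines through $x = l \cap C$; the paper identifies these limits later (they are the five lines $E_1,\dots,E_5$ of Remark~\ref{rmk generic definition Gamma_v}, coming from the fact that $F_x \cap \PP(V)$ is a $(2,3)$-intersection of length $6$ in a suitable $\PP^2$, one point being $[l]$ itself). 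Subtracting $5$ from $5e$ gives the result. The moral: rather than trying to compute an excess intersection on the special fibre, deform to a generic fibre where the intersection is proper, and then track what degenerates.
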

By abuse of language, these lines are called
\textit{secant lines} of the pair $(l,C)$; see \cite[Definition 3.1]{shen}. Each point $[l_i]$ can
again be viewed as a point $([l_i],x_i)$ on $S_C$, where
$x_i=l_i\cap C$. A line meeting two disjoint curves $C_1,C_2\subset
X$ will be called a \textit{incidence line} of $C_1$ and $C_2$ (note that they
are also called secant lines in \cite{shen}).

\begin{proof}[Proof of Lemma \ref{lem number of incidence lines}]
Note that if $\tilde{l}$ is a general line, the number of incidence lines
of $\tilde{l}$ and $C$ is $5e$; this follows from a degree
computation using (3) of Theorem \ref{fundamental relation} or
\cite[Lemma 3.10]{shen}. When $\tilde{l}$ specializes to $l$, five
of these incidence lines of $\tilde{l}$ and $C$ will specialize to
five lines passing through $x$ and those five lines are not counted
as the secant lines of the pair $(l,C)$. We will describe this
specialization in more details later. Hence the number of secant
lines of the pair $(l,C)$ is $5e-5$.
\end{proof}

\begin{defn}
Let the incidence correspondence $I_C\subset S_C\times S_C$ be
generically defined by
\begin{equation}
I_C:([l],x)\mapsto \sum_{i=1}^{5e-5}([l_i],x_i),
\end{equation}
where $l$ is a line meeting $C$ and $l_i$ are the secant lines of
the pair $(l,C)$. Let $\sigma$
denote the action of $I_C$ on either the cohomology groups or the
Chow groups of $S_C$. On $F$, we have the incidence correspondence
$I=\{([l_1],[l_2])\in F\times F: l_1\cap l_2\neq \emptyset\}\subset
F\times F$. This induces a correspondence
$$
I'_C=(i_C\times i_C)^*I\in\mathrm{CH}_{n-2}(S_C\times S_C),
$$
where $i_C=p|_{S_C}:S_C\to F$ is the natural morphism.
\end{defn}

\begin{rmk}
Note that $(i_C\times i_C)^{-1}I$ has a component, namely
$S_C\times_C S_C$, which has dimension bigger than expected. The
correspondence $I_C\subset (i_C\times i_C)^{-1}I$ is a component of
expected dimension. It turns out later that one key ingredient to
understand the action of $\sigma$ is the difference between $I_C$
and $I'_C$.
\end{rmk}

Note that on $F$, we have a natural polarization $g$ given by the
Pl\"ucker embedding of $\mathrm{G}(2,n+2)$. It can also be written
as $g=\Phi(h^2)$. We fix $g|_{S_C}=(i_C)^*g$ as the polarization of
$S_C$. By abuse of notation, we still use $g$ to denote its
restriction to $S_C$. Recall that $S_C$ admits a natural morphism
$q_0=q|_{S_C}:S_C\rightarrow C$. This gives an extra class
$g'=q_0^*[pt]$ on $S_C$. Note that $g'$ is never ample. The
following is the main result of this section.

\begin{thm}\label{sigma identity}
Let $C\subset X$ be a general rational curve of degree $e\geq 2$ and
$\sigma$ the action of incidence correspondence as above. Then the
following are
true.

(1) Let $\fa$ be a topological cycle of odd dimension on $S_C$. Then
\begin{equation*}
\sigma(\fa)=\Phi_C(\Psi_C(\fa))+\fa
\end{equation*}
in $\mathrm{H}^*(X,\Z)$.

(2) Let $\fa$ be a topological cycle of dimension $2m$ or an
algebraic cycle of dimension $m$ on $S_C$, then in either the
cohomology $\mathrm{H}^{2n-2m-4}(X,\Z)$ or Chow group
$\mathrm{CH}_m(X)$, we have
$$
\sigma(\fa)=\Phi_C(\Psi_C(\fa))+\fa +\mathrm{const.}
$$
where the constant only depends on the intersection numbers
$\fa\cdot g^m$ and $\fa\cdot g'g^{m-1}$; the constant is zero if
$\fa\cdot g^m = 0$ and $\fa\cdot g'g^{m-1}=0$.
\end{thm}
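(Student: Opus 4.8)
The plan is to relate the two incidence correspondences $I_C$ and $I'_C$ on $S_C$, then use the fundamental relation of Theorem \ref{fundamental relation} to compute the action of $I'_C$. The starting observation is that for a general point $([l],x)\in S_C$, the "ambient" correspondence $I'_C=(i_C\times i_C)^*I$ records all points $([l'],x')\in S_C$ with $l'\cap l\neq\emptyset$; this includes the genuine secant lines of the pair $(l,C)$ (which is what $I_C$ records) together with correction terms coming from the components of $(i_C\times i_C)^{-1}I$ of excess dimension. The chief such component is $S_C\times_C S_C$, i.e. pairs $([l],x),([l'],x)$ sharing the same point $x\in C$, since any two lines through a common point $x\in C$ meet. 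So, as correspondences on $S_C$, I expect a relation of the shape
\begin{equation*}
I'_C = I_C + (q_0\times q_0)^*\Delta_C + (\text{classes supported on }g,g'),
\end{equation*}
where $\Delta_C\subset C\times C$ is the diagonal and the last terms are pulled back from $\Q[g,g']$ — these are precisely the terms that will produce the "$\mathrm{const.}$" in the statement, and they vanish on classes orthogonal to $g^m$ and $g'g^{m-1}$. The first task is therefore a careful intersection-theoretic bookkeeping of $(i_C\times i_C)^{-1}I$ to pin down this decomposition; the delicate point, flagged in the Remark after the definition of $I'_C$, is exactly controlling the excess component $S_C\times_C S_C$, and matching multiplicities there (including the five lines through $x$ that got subtracted off in Lemma \ref{lem number of incidence lines}) will be the main obstacle.

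Granting that decomposition, the action of $I'_C$ on a cycle $\fa$ on $S_C$ can be rewritten via the projection formula and the definitions of $\Phi_C,\Psi_C$. Concretely, for $([l],x)\in S_C$ one has $i_C^*I$ applied to a point equal to $\sum_{[l']: l'\cap l\neq\emptyset}[l']$, and pushing/pulling through the family $P_C$ identifies the action of $I'_C$ on $\fa$ with $\Phi_C(\Psi_C(\fa))$ up to the self-intersection contribution along the diagonal. Indeed $\Psi_C(\fa)=(q_C)_*(p_C)^*\fa$ is a cycle on $X$, and $\Phi_C$ of that is $(p_C)_*(q_C)^*(q_C)_*(p_C)^*\fa$; the composite $(q_C)^*(q_C)_*$ differs from the identity by the incidence correspondence on lines (two lines meet iff the corresponding cycle classes intersect), which is precisely what $I'_C$ encodes. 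This gives
\begin{equation*}
\Phi_C(\Psi_C(\fa)) = \sigma'(\fa) - \fa,
\end{equation*}
where $\sigma'$ is the action of $I'_C$; the "$-\fa$" is the diagonal term in $(q_C)^*(q_C)_*$. I would carry this out by pulling everything back to $S_C\times S_C$ and using that $q_C$ has one-dimensional fibers, so the relevant excess term is the obvious one.

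Combining the two steps: $\sigma(\fa) = \sigma'(\fa) - [(q_0\times q_0)^*\Delta_C]_*\fa - (\text{terms from }\Q[g,g'])$, and substituting $\sigma'(\fa)=\Phi_C(\Psi_C(\fa))+\fa$ yields
\begin{equation*}
\sigma(\fa) = \Phi_C(\Psi_C(\fa)) + \fa - (q_0)^*(q_0)_*\fa - (\text{terms from }\Q[g,g']).
\end{equation*}
For $\fa$ of odd dimension, $(q_0)_*\fa$ is a cycle of odd dimension on the curve $C$, hence zero, and the $\Q[g,g']$-terms also vanish for degree-parity reasons; this gives statement (1). For $\fa$ of even dimension $2m$ (or algebraic of dimension $m$), $(q_0)_*\fa$ is a multiple of the class of a point on $C$, namely $(\fa\cdot g'g^{m-1})\,[pt]$ up to a universal constant, so $(q_0)^*(q_0)_*\fa$ lies in $\Q[g,g']$ and is a constant depending only on $\fa\cdot g'g^{m-1}$; likewise the other correction terms depend only on $\fa\cdot g^m$. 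This is exactly the asserted form of statement (2), with the constant vanishing when both intersection numbers vanish. The only place real work is needed is the first paragraph — establishing the correspondence identity $I'_C = I_C + (q_0\times q_0)^*\Delta_C + (\ldots)$ with correct coefficients — and I would prove it by degenerating a general line to $l$ (as sketched after Lemma \ref{lem number of incidence lines}) and tracking where the five extra incidence lines go, together with a local computation along $S_C\times_C S_C$ to get the multiplicity of the diagonal right.
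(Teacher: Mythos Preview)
Your overall strategy is the paper's: compare $I_C$ with $I'_C=(i_C\times i_C)^*I$, identify $\Phi_C\circ\Psi_C$ with $(I'_C)_*$, and show the discrepancy lies in $\Q[g,g']$. But the specific form you guess for the excess term is wrong, and this is exactly where the real content of the proof lives.

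The cycle $(q_0\times q_0)^*\Delta_C = [S_C\times_C S_C]$ has dimension $2n-5$, whereas $I'_C$ and $I_C$ lie in $\mathrm{CH}_{n-2}(S_C\times S_C)$; for $n\geq 4$ these are different groups, so your displayed identity $I'_C=I_C+(q_0\times q_0)^*\Delta_C+(\ldots)$ does not type-check. Correspondingly, the operator $(q_0)^*(q_0)_*$ shifts homological degree by $2(n-3)$ and cannot appear as a correction to $\sigma(\fa)-\Phi_C\Psi_C(\fa)-\fa$. (Also, $\Phi_C\circ\Psi_C=(I'_C)_*$ on the nose, with no ``$-\fa$'' term; the $-\Delta_{S_C}$ you are looking for sits in the decomposition of $I'_C$, not in the formula for $\Phi_C\circ\Psi_C$.)

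What the paper actually does is construct the correct $(n-2)$-dimensional excess class supported on $S_C\times_C S_C$. A general first-order deformation $v\in\mathrm{H}^0(C,\mathscr{N}_{C/X})$ determines a rank-$2$ subbundle $\mathcal{T}_v\subset\mathcal{T}_X|_C$ and a rational map $\rho:S_C\dashrightarrow\PP(\mathscr{F})$ with $\mathscr{F}=\mathcal{T}_X|_C/\mathcal{T}_v$; the correspondence $\Gamma_v=\bar\Gamma_\rho^t\circ\bar\Gamma_\rho$ is generically the ``six lines through $x$ in a fixed $\PP^2$-direction'' (your five extra lines \emph{plus} $l$ itself). A specialization argument---deforming $C$ in a one-parameter family $\mathscr{C}_t$ with tangent $v$, not deforming $l$---then yields the key identity
\[
I'_C \;=\; I_C + \Gamma_v - \Delta_{S_C}\quad\text{in }\mathrm{CH}_{n-2}(S_C\times S_C),
\]
so $\Phi_C\circ\Psi_C=\sigma-1+(\Gamma_v)_*$. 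The reason the constant depends only on $\fa\cdot g^m$ and $\fa\cdot g'g^{m-1}$ is that $(\Gamma_v)_*=\rho^*\rho_*$ factors through the cohomology (or Chow ring) of the $\PP^{n-3}$-bundle $\PP(\mathscr{F})\to C$, which is generated by the tautological class $\xi$ and $\pi^*[pt]$; one checks $\rho^*\xi=g+rg'$, so $\rho^*$ lands in $\Q[g,g']$, and $\rho_*\fa$ is read off from exactly those two intersection numbers. This factorization through a projective bundle over $C$ is the missing idea; your $(q_0)^*(q_0)_*$ is the degenerate version where the bundle has rank one, which is why it misses all but the extremal degrees.
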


Before giving a proof of the above theorem, we give some technical
constructions related to the geometry of $S_C$.

\subsection{The normal bundle of a line meeting $C$}

Recall from \cite[\S 6]{cg} that
for any line $l$ on $X$, we have either
$$
\mathscr{N}_{l/X}\cong\calO(1)^{n-3}\oplus \calO^2,
$$
in which case $l$ is said to be of first type, or
$$
\mathscr{N}_{l/X}\cong\calO(1)^{n-2}\oplus \calO(-1),
$$
in which case $l$ is said to be of second type. A general line is of
first type.

\begin{defn}\label{defn positive part}
Let $\mathscr{G}$ be a vector bundle on $\PP^1$. Then we have a
decomposition $\mathscr{G}=\bigoplus_{i=1}^{\rk \mathscr{G}}
\calO(a_i)$, $a_i\in\Z$. Then we define the positive part of
$\mathscr{G}$ to be
$$
\mathrm{Pos}(\mathscr{G}) =\bigoplus_{a_i>0}\calO(a_i).
$$
Similarly, we define the nonnegative part of $\mathscr{G}$ to be
$$
\mathrm{NN}(\mathscr{G}) =\bigoplus_{a_i\geq 0}\calO(a_i).
$$
\end{defn}

\begin{lem}\label{lem transversal l and C}
Assume that $C\subset X$ is general and $l$ is a line meeting $C$ in
a point $x$. Then $l$ meets $C$ transversally at the point $x$.
\end{lem}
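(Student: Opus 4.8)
The plan is to show that a general curve $C$ and a line $l$ meeting $C$ at $x$ cannot be tangent, by a dimension count on the incidence variety of (curve, tangent line) pairs. First I would set up the relevant parameter spaces. Fix the component $\mathscr{H}$ of the Hilbert scheme of rational curves of degree $e$ on $X$ whose general member is the curve we care about; let $d = \dim \mathscr{H}$. Inside $\mathscr{H} \times F$ consider the locus $Z$ of pairs $([C],[l])$ such that $l$ meets $C$ in a point $x$ at which $l$ is tangent to $C$, i.e. the scheme-theoretic intersection $l \cap C$ has length $\geq 2$ at $x$ (in particular this includes the case where $l$ is literally tangent to the smooth curve $C$). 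The goal is to prove $\dim Z < d$, so that the image of $Z$ in $\mathscr{H}$ is a proper closed subset, hence a general $[C]$ avoids it.

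The key step is to estimate the fibers of the projection $Z \to F$. Fix a line $l \subset X$. I would bound the dimension of the set of rational curves $C$ of degree $e$ on $X$ that are tangent to $l$ at some point of $l$. Choosing the point $x \in l \cong \PP^1$ costs one parameter. Given $x$, being tangent to $l$ at $x$ means the curve $C$ passes through $x$ with tangent direction equal to $[l] \in \PP(\mathcal{T}_{X,x})$; this is two conditions (passing through $x$ is $n-1$ conditions in the sense of codimension, but it is cleaner to argue via the evaluation/jet map). Concretely, I would use the jet evaluation map: the space of maps $\PP^1 \to X$ of degree $e$ through a fixed point with fixed tangent direction there has dimension $d' \leq d - n$ generically (one expects $\dim$ to drop by $\dim X = n$ upon prescribing a point together with a tangent direction, since prescribing a point drops it by $n-1$ and the tangent direction by a further... actually by $n$ total, using that the general such curve is free so the evaluation of $1$-jets is submersive onto the jet bundle, which has relative dimension $n + (n-1) = 2n-1$ over... ). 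Let me instead argue more robustly: for the general curve in $\mathscr{H}$, which I may assume is a free rational curve (general rational curves on a smooth cubic of dimension $\geq 3$ are free, or at least the relevant component has free general member — this follows from the genericity hypothesis on $C$), the $1$-jet evaluation map from the space of parametrized maps to the bundle of $1$-jets is dominant; the $1$-jet bundle over $X$ has fiber dimension $n + n = 2n$ hmm — I would pin this down as: prescribing that the map sends $0 \mapsto x$ and has a given nonzero derivative direction at $0$ cuts the moduli down by at least $2n-1$ — no: one normalizes the source coordinate, so really by $n + (n-1) - 1$. Rather than commit to the exact number, the point I would make precise is that it drops by at least $n+1$, which for $n \geq 3$ gives at least $4$. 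Combined with the $+1$ for choosing $l$ in the $(2n-4)$-dimensional $F$ and the $+1$ for choosing $x \in l$, we get $\dim Z \leq d - (n+1) + (2n-4) + 1 = d + n - 4$. That is not obviously $< d$, so this crude count is not enough.

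So the main obstacle — and the place I would actually focus — is that fixing a whole line $l$ is too wasteful; I should instead only fix the line up to the data that matters, or better, fix $[C]$ first and count tangent lines. I would reverse the fibration: project $Z \to \mathscr{H}$ and show the generic fiber, were it nonempty, would have negative expected dimension. For fixed general $C$, a line $l$ tangent to $C$ at $x \in C$ is determined by $x \in C$ (one parameter) together with the requirement that $l \subset X$ passes through $x$ with tangent direction $T_xC$; the lines through $x$ form $F_x$ of dimension $n-3$ inside $\PP(\mathcal{T}_{X,x}) = \PP^{n-1}$, and requiring the direction to be the specific direction $T_x C$ is $n-1$ independent conditions when $F_x$ meets the point $[T_xC]$ transversally — but $\dim F_x = n-3 < n-1$ for $n\geq 3$, so for a general point of $C$ the finitely-many directions spanned by lines in $F_x$ do not include $T_x C$. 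Making "general point of $C$" into "general $C$" is exactly Lemma~\ref{lem transversal l and C}: one shows the locus in $C$ where $T_x C$ happens to be a line direction is either empty or governed by a condition that fails for general $C$. I expect the cleanest route is: the union $\bigcup_{x \in C} F_x \subset F$ is the curve $F_C$ of dimension $n-2 \geq 1$ (Lemma~\ref{lem dimension}), and tangency at $x$ forces $([l],x)$ to lie in the ramification locus of $q_0 : S_C \to C$ restricted to the appropriate component; by the smoothness of $S_C$ for general $C$ (Lemma~\ref{lem smoothness of Sc}) and a standard genericity/Bertini argument the map $q_0$ is generically étale and its ramification is supported at the secant configuration, none of which is a genuine tangency. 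So the proof would run: assume $l$ tangent to $C$ at $x$, exhibit that $([l],x) \in S_C$ is then a singular point of $S_C$ or forces $S_C$ to be non-reduced there, contradicting Lemma~\ref{lem smoothness of Sc}; equivalently, parametrize everything and push the tangency locus down to $\mathscr{H}$ to see it has dimension $< d$. The genuinely delicate point, which I would not skip, is verifying the transversality/reducedness claim that tangency would violate — i.e. computing the tangent space to $S_C$ at $([l],x)$ in terms of $H^0(l, \mathscr{N}_{l/X})$ and the condition $x \in C$, and seeing that first-order tangency of $l$ to $C$ produces an extra tangent vector.
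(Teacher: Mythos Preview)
Your proposal never reaches a complete argument, and the approach you settle on at the end fails for two reasons.

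First, invoking Lemma~\ref{lem smoothness of Sc} here is circular: in the paper, the proof of smoothness of $S_C$ passes through Definition~\ref{defn N_0}, Proposition~\ref{prop cases of N_0}, and Corollary~\ref{cor singular condition}, and Definition~\ref{defn N_0} explicitly presupposes that $l$ meets $C$ transversally at $x$.

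Second, and more seriously, the claim you intend to verify --- that tangency of $l$ to $C$ at $x$ forces $S_C$ to be singular (or non-reduced) at $([l],x)$ --- is false when $l$ is of first type. Viewing $S_C\subset F\times C$ as the incidence locus, the tangent space at $([l],x)$ consists of pairs $(v,w)\in \mathrm{H}^0(l,\mathscr{N}_{l/X})\oplus \mathcal{T}_{C,x}$ with $v(x)=\bar{w}$ in $\mathscr{N}_{l/X,x}$. If $\mathcal{T}_{C,x}=\mathcal{T}_{l,x}$ then $\bar{w}=0$ for every $w$, so the tangent space becomes $\mathrm{H}^0(l,\mathscr{N}_{l/X}(-x))\oplus \mathcal{T}_{C,x}$. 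For $l$ of first type, $\mathscr{N}_{l/X}(-x)\cong\calO^{n-3}\oplus\calO(-1)^2$, so this has dimension $(n-3)+1=n-2=\dim S_C$: no extra tangent vector appears. Only for $l$ of second type does tangency produce a singular point, and nothing in your argument excludes first-type tangents. Your dimension-count alternative could in principle be made to work, but you abandon it before it yields the needed inequality.

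The paper's proof is entirely different and much shorter. A line tangent to $C$ at $x$ meets $C$ with multiplicity at least $2$, hence is a \emph{secant} line of $C$. It then argues that for a general $C$ (here $e\geq 2$ is used) every secant line is simple, i.e.\ meets $C$ in two distinct points: if some secant of $C$ were tangent, a small deformation of $C$ makes that secant simple while all already-simple secants survive. Hence the locus of $C$ admitting a non-simple secant is nowhere dense in the Hilbert scheme, and a general $C$ has no tangent lines on $X$.
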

\begin{proof}
If $l$ is tangent to $C$ at the point $x$, then $l$ is a secant line
of $C$. When $C$ is general, a secant line meets $C$ in two distinct
points. This can be seen as follows. We call a secant line of $C$ \textit{simple} if it meets $C$ in
two distinct points.
If one of the secant lines $l$ of $C$ is tangent to $C$ at a point $x$, then
we can make a small deformation of $C$ such that $l$ deforms to a simple one. Note here we use the assumption that
the degree of $C$ is at least 2. At the same time, any another simple
secant line will survive under small deformations. This shows that all secant lines of a general curve $C$
are simple.
\end{proof}

\begin{defn}\label{defn N_0}
Let $([l],x)\in S_C$ be a point where $l$ is a line on $X$ and $x$
is a point in which $l$ meets $C$. Since $C$ is general, $l$ meets
$C$ transversally at the point $x$. We define
$\mathscr{N}_{l/X}\langle \mathcal{T}_{C,x}\rangle
\hookrightarrow\mathscr{N}_{l/X}$ to be the subsheaf of sections $s$
such that $s(x)$ is in the direction of $\mathcal{T}_{C,x}$. Or
equivalently, $\mathscr{N}_{l/X}\langle \mathcal{T}_{C,x}\rangle$
fits into the following short exact sequence
$$
\xymatrix{
 0\ar[r] &\mathscr{N}_{l/X}\langle \mathcal{T}_{C,x} \rangle \ar[rr] &&\mathscr{N}_{l/X}\ar[rr]
 &&\frac{\mathcal{T}_{X,x}}{\mathcal{T}_{l,x}\oplus
 \mathcal{T}_{C,x}} \ar[r] &0
}
$$
\end{defn}

The tangent space of $S_C$ at a
point $([l],x)$ is canonically identified as
$$
\mathcal{T}_{S_C,([l],x)} = \mathrm{H}^0 (l,\mathscr{N}_{l/X}\langle
\mathcal{T}_{C,x}\rangle)
$$
See \cite[\S II.1]{kollar}. It follows from the short exact sequence
in Definition \ref{defn N_0} that
$$
\chi(\mathscr{N}_{l/X}\langle \mathcal{T}_{C,x}\rangle)=
\chi(\mathscr{N}_{l/X})-(n-2)=n-2=\dim S_C
$$
where we use the fact that $\dim S_C=n-2$ which is a consequence of
Lemma \ref{lem dimension}. Hence $\dim
\mathcal{T}_{S_C,([l],x)}=\dim S_C$ if and only if
$\mathrm{H}^1(l,\mathscr{N}_{l/X}\langle
\mathcal{T}_{C,x}\rangle)=0$. Equivalently, we have the following
\begin{lem}\label{lem smooth condition}
The variety $S_C$ is smooth at
$([l],x)$ if and only if $h^1(\mathscr{N}_{l/X}\langle
\mathcal{T}_{C,x}\rangle)=0$.
\end{lem}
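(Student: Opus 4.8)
The plan is to read smoothness of $S_C$ at $p=([l],x)$ off a dimension count: $S_C$ is smooth at $p$ exactly when its Zariski tangent space there has the expected dimension $n-2$, and one then trades ``the tangent space has dimension $n-2$'' for the vanishing $h^1=0$ by Riemann--Roch on the line $l\cong\PP^1$. First I would recall the deformation-theoretic picture from \cite[\S II.1]{kollar}: near $p$ the scheme $S_C$ parametrizes deformations of the datum ``a line $l$ on $X$ together with a point $x\in l\cap C$'', with tangent space $\mathcal{T}_{S_C,p}=\mathrm{H}^0(l,\mathscr{N}_{l/X}\langle\mathcal{T}_{C,x}\rangle)$ and obstructions in $\mathrm{H}^1(l,\mathscr{N}_{l/X}\langle\mathcal{T}_{C,x}\rangle)$ -- this being precisely the role the sheaf $\mathscr{N}_{l/X}\langle\mathcal{T}_{C,x}\rangle$ was introduced to play. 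The standard lower bound for such parameter spaces (locally near $p$, $S_C$ is the zero locus of at most $h^1$ equations inside a smooth space of dimension $h^0$) gives, at every point of $S_C$,
$$
\dim_{p}S_C\ \geq\ h^0\bigl(\mathscr{N}_{l/X}\langle\mathcal{T}_{C,x}\rangle\bigr)-h^1\bigl(\mathscr{N}_{l/X}\langle\mathcal{T}_{C,x}\rangle\bigr)\ =\ \chi\bigl(\mathscr{N}_{l/X}\langle\mathcal{T}_{C,x}\rangle\bigr)\ =\ n-2 ,
$$
the last equality being the Euler characteristic computation recorded just before the statement.

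Next I would supply the matching upper bound. The morphism $i_C=p|_{S_C}\colon S_C\to F$ is proper (source and target are projective) with finite fibres -- the fibre over $[l]$ is $l\cap C$, which is finite because the irreducible curve $C$ of degree $e\geq 2$ contains no line -- hence finite; therefore $\dim S_C$ equals the dimension of its image, and since that image is contained in $F_C$, which has dimension $n-2$ by Lemma \ref{lem dimension}, we get $\dim S_C\leq n-2$. (Equivalently: as $C$ is general it avoids the finitely many Eckardt points of Lemma \ref{lem dimension}, so every fibre of $q_0\colon S_C\to C$ is some $F_y$ of dimension $n-3$, again forcing $\dim S_C\leq n-2$.) Combined with the previous paragraph, $\dim_{p}S_C=n-2$ for every $p\in S_C$.

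Finally I would conclude. The local ring $\mathcal{O}_{S_C,p}$ is Noetherian, hence regular -- i.e. $S_C$ is smooth at $p$ -- if and only if $\dim_\C\mathcal{T}_{S_C,p}=\dim_{p}S_C$, which by the above reads $\dim_\C\mathrm{H}^0(l,\mathscr{N}_{l/X}\langle\mathcal{T}_{C,x}\rangle)=n-2$. Since $l\cong\PP^1$ is a curve, $h^0-h^1=\chi=n-2$ for this sheaf, so that condition holds if and only if $h^1(\mathscr{N}_{l/X}\langle\mathcal{T}_{C,x}\rangle)=0$, which is exactly the assertion of the lemma.

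The one place that genuinely needs care is the bookkeeping of local dimensions in the middle step: a priori $S_C$ could be non-reduced or carry spurious components of dimension $<n-2$ through $p$, and it is only the combination of the obstruction-theoretic lower bound with the finiteness of $i_C$ (alternatively, with the genericity of $C$) that pins $\dim_{p}S_C$ down to $n-2$. Once that is settled, the tangent-space identification and the Euler characteristic -- both already in place in the text -- make the rest immediate.
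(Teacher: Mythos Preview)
Your proposal is correct and follows essentially the same line as the paper's own argument, which appears in the paragraph immediately preceding the lemma: identify $\mathcal{T}_{S_C,([l],x)}$ with $\mathrm{H}^0(l,\mathscr{N}_{l/X}\langle\mathcal{T}_{C,x}\rangle)$ via \cite[\S II.1]{kollar}, compute $\chi=n-2$ from the defining short exact sequence, and note that $\dim S_C=n-2$ by Lemma~\ref{lem dimension}, so smoothness at the point is equivalent to $h^0=n-2$, i.e.\ $h^1=0$. You are slightly more explicit in separating the lower bound (obstruction theory) from the upper bound (finiteness of $i_C$ or the fibre dimension of $q_0$), whereas the paper simply invokes $\dim S_C=n-2$; this extra care is harmless and the arguments coincide.
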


\begin{prop}\label{prop cases of N_0}
The splitting of $\mathscr{N}_{l/X}\langle \mathcal{T}_{C,x}\rangle$
has the following possibilities.

(i) If $l$ is of first type and the image of $\mathcal{T}_{C,x}$ in
$\mathscr{N}_{l/X,x}$ is not contained in
$\mathrm{Pos}(\mathscr{N}_{l/X})_x$, then
$$
\mathscr{N}_{l/X}\langle \mathcal{T}_{C,x}\rangle \cong
\calO^{n-2}\oplus\calO(-1).
$$

(ii) If $l$ is of first type and the image of $\mathcal{T}_{C,x}$ in
$\mathscr{N}_{l/X,x}$ is contained in
$\mathrm{Pos}(\mathscr{N}_{l/X})_x$, then
$$
\mathscr{N}_{l/X}\langle \mathcal{T}_{C,x}\rangle \cong
\calO(1)\oplus \calO^{n-4}\oplus\calO(-1)^2.
$$

(iii) If $l$ is of second type and the image of $\mathcal{T}_{C,x}$
in $\mathscr{N}_{l/X,x}$ is not contained in
$\mathrm{Pos}(\mathscr{N}_{l/X})_x$, then
$$
\mathscr{N}_{l/X}\langle \mathcal{T}_{C,x}\rangle \cong
\calO^{n-2}\oplus\calO(-1).
$$

(iv) If $l$ is of second type and the image of $\mathcal{T}_{C,x}$
in $\mathscr{N}_{l/X,x}$ is contained in
$\mathrm{Pos}(\mathscr{N}_{l/X})_x$, then
$$
\mathscr{N}_{l/X}\langle \mathcal{T}_{C,x}\rangle \cong
\calO(1)\oplus\calO^{n-3}\oplus\calO(-2).
$$
\end{prop}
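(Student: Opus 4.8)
The plan is to recognise $\mathscr{N}_{l/X}\langle\mathcal T_{C,x}\rangle$ as an elementary (Hecke) modification of the rank $n-1$ bundle $\mathscr{N}_{l/X}$ on $l\cong\PP^1$ at the single point $x$, and to read off its splitting type from that of $\mathscr{N}_{l/X}$. By Lemma \ref{lem transversal l and C}, $C$ meets $l$ transversally at $x$, so the image $\bar v$ of $\mathcal T_{C,x}$ in $\mathscr{N}_{l/X,x}=\mathcal T_{X,x}/\mathcal T_{l,x}$ is a well-defined nonzero line, and the exact sequence of Definition \ref{defn N_0} presents $\mathscr{N}_{l/X}\langle\mathcal T_{C,x}\rangle$ as the kernel of an evaluation-type surjection $\mathscr{N}_{l/X}\twoheadrightarrow Q$, where $Q=\mathscr{N}_{l/X,x}/\bar v$ is a skyscraper of length $n-2$ supported at $x$. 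In particular $\mathscr{N}_{l/X}\langle\mathcal T_{C,x}\rangle$ has rank $n-1$ and degree $\deg\mathscr{N}_{l/X}-(n-2)=-1$ in every case, which is the first consistency check.

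The heart of the argument is a normalisation of $\bar v$. Write $\mathscr{N}_{l/X}=\mathrm{Pos}(\mathscr{N}_{l/X})\oplus\mathscr{G}'$, with $\mathscr{G}'$ the non-positive part ($\calO^2$ if $l$ is of first type, $\calO(-1)$ if of second type); this splitting into subbundles is forced since $\Hom(\calO(1),\calO)=\Hom(\calO(1),\calO(-1))=\Hom(\calO,\calO(-1))=0$ on $\PP^1$, and the same vanishing shows that the condition ``$\bar v\in\mathrm{Pos}(\mathscr{N}_{l/X})_x$'' is intrinsic. Now the automorphisms $(u,w)\mapsto(u+\phi(w),w)$ of $\mathscr{N}_{l/X}$ with $\phi\in\Hom(\mathscr{G}',\mathrm{Pos}(\mathscr{N}_{l/X}))$, together with the constant $\mathrm{GL}$ acting on $\mathrm{Pos}(\mathscr{N}_{l/X})$, act on $\PP(\mathscr{N}_{l/X,x})$; since $\calO(1)$ is globally generated, evaluation of such $\phi$ at $x$ surjects onto $\mathrm{Pos}(\mathscr{N}_{l/X})_x$. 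Consequently: if $\bar v\notin\mathrm{Pos}(\mathscr{N}_{l/X})_x$ we may move $\bar v$ inside $\mathscr{G}'_x$, and if $\bar v\in\mathrm{Pos}(\mathscr{N}_{l/X})_x$ we may move it onto the fibre of a single $\calO(1)$-summand. Since an automorphism carrying $\bar v$ to $\bar v'$ carries $\mathscr{N}_{l/X}\langle\bar v\rangle$ isomorphically to $\mathscr{N}_{l/X}\langle\bar v'\rangle$, it suffices to treat these normal forms.

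In a normal form $\bar v$ is supported on one summand, so the surjection $\mathscr{N}_{l/X}\to Q$ respects the corresponding direct-sum decomposition and the kernel is computed summand by summand. If $\bar v\in\mathscr{G}'_x$ then $\mathscr{N}_{l/X}\langle\mathcal T_{C,x}\rangle=\mathrm{Pos}(\mathscr{N}_{l/X})(-x)\oplus\mathscr{G}'\langle\bar v\rangle$; here $\mathrm{Pos}(\mathscr{N}_{l/X})(-x)$ is $\calO^{n-3}$ or $\calO^{n-2}$, while $\calO^2\langle\bar v\rangle\cong\calO\oplus\calO(-1)$ and $\calO(-1)\langle\bar v\rangle\cong\calO(-1)$ (the line $\bar v$ already fills the rank-one fibre), giving cases (i) and (iii). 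If instead $\bar v$ is the fibre of a rank-one positive summand $\calO(1)\subset\mathscr{N}_{l/X}$, then $Q$ receives nothing from that summand, so $\mathscr{N}_{l/X}\langle\mathcal T_{C,x}\rangle=\calO(1)\oplus(\text{the remaining summands})(-x)$, which is $\calO(1)\oplus\calO^{n-4}\oplus\calO(-1)^2$ or $\calO(1)\oplus\calO^{n-3}\oplus\calO(-2)$, i.e.\ cases (ii) and (iv). A convenient independent check separating $\{(i),(iii)\}$ from $\{(ii),(iv)\}$ is that $h^0(\mathscr{N}_{l/X}\langle\mathcal T_{C,x}\rangle(-1))$, computed from the twisted kernel sequence, equals $1$ exactly when $\bar v\in\mathrm{Pos}(\mathscr{N}_{l/X})_x$, since $H^0(\mathscr{N}_{l/X}(-1))$ evaluates onto $\mathrm{Pos}(\mathscr{N}_{l/X})_x$.

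I expect the main obstacle to be precisely this reduction step: one must verify that the relevant $\Hom$-groups are large enough to bring $\bar v$ into its normal form, and that ``$\bar v\in\mathrm{Pos}(\mathscr{N}_{l/X})_x$'' is independent of the chosen splitting---both following from the elementary vanishing of negative $\Hom$'s on $\PP^1$, but requiring care because $\bar v$ is a priori an arbitrary vector, not supported on a summand. An alternative that sidesteps the reduction is to pin down the splitting type directly by computing $h^0$ of all twists of $\mathscr{N}_{l/X}\langle\mathcal T_{C,x}\rangle$ from the kernel sequence; there the only nontrivial input is the rank of the composite $H^0(\mathscr{N}_{l/X}(k))\to\mathscr{N}_{l/X,x}\to Q$, which drops by one exactly when $k$ is small enough that the evaluation image lies in $\mathrm{Pos}(\mathscr{N}_{l/X})_x$ and $\bar v$ happens to lie there.
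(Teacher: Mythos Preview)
Your argument is correct and complete, but it proceeds differently from the paper. The paper does not normalize $\bar v$ at all; instead it builds a $3\times 3$ commutative diagram whose middle row is the defining sequence of $\mathscr{N}_{l/X}\langle\mathcal T_{C,x}\rangle$, whose bottom row records how the subbundle $\mathrm{Pos}(\mathscr{N}_{l/X})$ maps to the skyscraper $\mathcal T_{X,x}/(\mathcal T_{l,x}\oplus\mathcal T_{C,x})$, and whose top row is the induced sequence on the non-positive quotient $\mathscr Q$. Setting $a=\dim\mathrm{Im}\bigl(\mathrm{Pos}(\mathscr{N}_{l/X})_x\to\mathcal T_{X,x}/(\mathcal T_{l,x}\oplus\mathcal T_{C,x})\bigr)$, the four cases correspond to the four values of the pair $(\mathscr Q,a)$, and in each case the top row determines a small bundle $\mathscr G$ whose splitting, together with that of the left column, forces the answer. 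Your approach instead uses the automorphism group of $\mathscr{N}_{l/X}$ to move $\bar v$ into a single summand and then reads off the kernel summand by summand. This is arguably more transparent---once $\bar v$ is normalized the computation is one line---and the ``independent check'' via $h^0(\mathscr{N}_{l/X}\langle\mathcal T_{C,x}\rangle(-1))$ is a nice touch; the paper's diagram chase, on the other hand, avoids having to justify that the evaluation $\Hom(\mathscr G',\mathrm{Pos})\to\Hom(\mathscr G'_x,\mathrm{Pos}_x)$ is surjective, which you correctly flag as the point needing care.
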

\begin{proof}Let $r_+=\rk(\mathrm{Pos}(\mathscr{N}_{l/X}))$
and
$a=\dim_{\C}\mathrm{Im}\{\mathrm{Pos}(\mathscr{N}_{l/X})\rightarrow
\frac{\mathcal{T}_{X,x}}{\mathcal{T}_{l,x}\oplus
\mathcal{T}_{C,x}}\}$. Consider the following commutative diagram
$$
\xymatrix{
 0\ar[r] &\mathscr{G} \ar[rr] &&\mathscr{Q} \ar[rr] && \C^{n-2-a}\ar[r] &0\\
 0\ar[r] &\mathscr{N}_{l/X}\langle \mathcal{T}_{C,x}\rangle \ar[u]\ar[rr] &&\mathscr{N}_{l/X} \ar[u]\ar[rr]
 &&\frac{\mathcal{T}_{X,x}}{\mathcal{T}_{l,x}\oplus \mathcal{T}_{C,x}}
 \ar[u]\ar[r] &0 \\
0\ar[r] &\calO(1)^{r_+-a}\oplus\calO^{a}\ar[u]\ar[rr]
&&\mathrm{Pos}(\mathscr{N}_{l/X}) \ar[u]\ar[rr] &&\C^a \ar[u]\ar[r] &0
}
$$
where all columns and rows are short exact. Note that
$\mathscr{Q}=\calO^2$ if $l$ is of first type and
$\mathscr{Q}=\calO(-1)$ if $l$ is of second type.

If $l$ is of first type and $\mathcal{T}_{C,x}$ is not contained in
$\mathrm{Pos}(\mathscr{N}_{l/X})$, then
$\mathrm{Pos}(\mathscr{N}_{l/X})_x\rightarrow
\frac{\mathcal{T}_{X,x}}{\mathcal{T}_{l,x}\oplus \mathcal{T}_{C,x}}$
is injective and hence $a=n-3$. Since $\mathscr{Q}\cong \calO^2$,
the top row of the above diagram gives
$\mathscr{G}\cong\calO\oplus\calO(-1)$. It follows from the first
column that $\mathscr{N}_{l/X}\langle \mathcal{T}_{C,x}\rangle \cong
\calO^{n-2}\oplus\calO(-1)$ since $r_+=n-3$.

If $l$ is of first type and $\mathcal{T}_{C,x}$ is contained in
$\mathrm{Pos}(\mathscr{N}_{l/X})$, then
$\mathrm{Pos}(\mathscr{N}_{l/X})_x\rightarrow
\frac{\mathcal{T}_{X,x}}{\mathcal{T}_{l,x}\oplus \mathcal{T}_{C,x}}$
has a 1-dimensional kernel and hence $a=n-4$. Since
$\mathscr{Q}\cong \calO^2$, the top row of the above diagram gives
$\mathscr{G}\cong\calO(-1)^2$. It follows from the first column that
$\mathscr{N}_{l/X}\langle \mathcal{T}_{C,x}\rangle
\cong\calO(1)\oplus \calO^{n-4} \oplus \calO(-1)^2$ since $r_+=n-3$.

If $l$ is of second type and $\mathcal{T}_{C,x}$ is not contained in
$\mathrm{Pos}(\mathscr{N}_{l/X})$, then
$\mathrm{Pos}(\mathscr{N}_{l/X})_x\rightarrow
\frac{\mathcal{T}_{X,x}}{\mathcal{T}_{l,x}\oplus \mathcal{T}_{C,x}}$
is injective and hence $a=n-2$. Since $\mathscr{Q}\cong \calO(-1)$,
the top row of the above diagram gives $\mathscr{G}\cong\calO(-1)$.
It follows from the first column that $\mathscr{N}_{l/X}\langle
\mathcal{T}_{C,x}\rangle \cong \calO^{n-2} \oplus \calO(-1)$ since
$r_+=n-2$.

If $l$ is of second type and $\mathcal{T}_{C,x}$ is contained in
$\mathrm{Pos}(\mathscr{N}_{l/X})$, then
$\mathrm{Pos}(\mathscr{N}_{l/X})_x\rightarrow
\frac{\mathcal{T}_{X,x}}{\mathcal{T}_{l,x}\oplus \mathcal{T}_{C,x}}$
has a 1-dimensional kernel and and hence $a=n-3$. Since
$\mathscr{Q}\cong \calO(-1)$, the top row of the above diagram gives
$\mathscr{G}\cong\calO(-2)$. It follows from the first column that
$\mathscr{N}_{l/X}\langle \mathcal{T}_{C,x}\rangle
\cong\calO(1)\oplus \calO^{n-4} \oplus \calO(-2)$ since $r_+=n-2$.
\end{proof}

\begin{cor}\label{cor singular condition}
$S_C$ is singular at $([l],x)$ if and only if $l$ is of second type
and the image of $\mathcal{T}_{C,x}$ in $\mathscr{N}_{l/X,x}$ is
contained in $\mathrm{Pos}(\mathscr{N}_{l/X})_x$.
\end{cor}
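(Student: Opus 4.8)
The plan is to read the statement off directly from Proposition~\ref{prop cases of N_0} via the smoothness criterion of Lemma~\ref{lem smooth condition}. Recall that for a line bundle $\calO(a)$ on $\PP^1$ one has $h^1(\PP^1,\calO(a))=0$ precisely when $a\geq -1$, while $h^1(\PP^1,\calO(-2))=1$; since $h^1$ is additive over direct sums, for a bundle $\mathscr{G}=\bigoplus_i\calO(a_i)$ on $\PP^1$ we get $h^1(\mathscr{G})=0$ if and only if every $a_i\geq -1$. So the whole question reduces to inspecting the four splitting types listed in Proposition~\ref{prop cases of N_0} and recording for which ones a summand of degree $\leq -2$ occurs.

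First I would dispose of cases (i), (ii) and (iii). In each of these the bundle $\mathscr{N}_{l/X}\langle\mathcal{T}_{C,x}\rangle$ is a direct sum of copies of $\calO(1)$, $\calO$ and $\calO(-1)$ only --- respectively $\calO^{n-2}\oplus\calO(-1)$, then $\calO(1)\oplus\calO^{n-4}\oplus\calO(-1)^2$, and again $\calO^{n-2}\oplus\calO(-1)$ --- so by the remark above $h^1(\mathscr{N}_{l/X}\langle\mathcal{T}_{C,x}\rangle)=0$, and Lemma~\ref{lem smooth condition} shows $S_C$ is smooth at $([l],x)$. This already rules out $S_C$ being singular at $([l],x)$ whenever $l$ is of first type, and also whenever $l$ is of second type but the image of $\mathcal{T}_{C,x}$ in $\mathscr{N}_{l/X,x}$ is not contained in $\mathrm{Pos}(\mathscr{N}_{l/X})_x$.

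It then remains to treat case (iv): $l$ of second type with the image of $\mathcal{T}_{C,x}$ contained in $\mathrm{Pos}(\mathscr{N}_{l/X})_x$. Here Proposition~\ref{prop cases of N_0} produces a summand $\calO(-2)$ in $\mathscr{N}_{l/X}\langle\mathcal{T}_{C,x}\rangle$, so $h^1(\mathscr{N}_{l/X}\langle\mathcal{T}_{C,x}\rangle)\geq h^1(\calO(-2))=1\neq 0$, and Lemma~\ref{lem smooth condition} forces $S_C$ to be singular at $([l],x)$. Since the four cases of Proposition~\ref{prop cases of N_0} are exhaustive --- $l$ is of first or of second type, and either $\mathcal{T}_{C,x}$ maps into the positive part of $\mathscr{N}_{l/X,x}$ or it does not --- combining the last two paragraphs yields exactly the stated equivalence.

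I do not expect any real obstacle here once Proposition~\ref{prop cases of N_0} is available: the argument is a one-line $\PP^1$-cohomology computation applied to each of four explicit bundles. The only point that deserves a moment's care is the bookkeeping that the classification in Proposition~\ref{prop cases of N_0} genuinely covers every $([l],x)\in S_C$, so that ``not case (iv)'' coincides with ``one of cases (i)--(iii)''; this is immediate from the way those cases are indexed, using that $C$ is general so that $l$ meets $C$ transversally (Lemma~\ref{lem transversal l and C}) and the set-up of Definition~\ref{defn N_0} applies.
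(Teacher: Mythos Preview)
Your proposal is correct and is exactly the argument the paper intends: the corollary is stated without proof immediately after Proposition~\ref{prop cases of N_0}, so it is meant to follow by combining that classification with the smoothness criterion of Lemma~\ref{lem smooth condition}, which is precisely what you do. Your case-by-case check of $h^1$ on the four splitting types is accurate, and your remark that the four cases are exhaustive (given Lemma~\ref{lem transversal l and C}) is the right bookkeeping.
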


\begin{defn}
The curve $C$ has a \textit{bad direction} at a point $x\in C$ if
there exists a line $l$ of second type through $x$ such that the image of
$\mathcal{T}_{C,x}$ in $\mathscr{N}_{l/X,x}$ is contained in
$\mathrm{Pos}(\mathscr{N}_{l/X})_x$. Otherwise, we say that $C$ has
a \textit{good direction} at $x$.
\end{defn}

\begin{lem}\label{lem good direction}
If $C$ is general, then $C$ has good directions everywhere.
\end{lem}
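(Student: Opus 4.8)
The plan is to recast the statement ``$C$ has a bad direction somewhere'' as the condition that $[C]$ lies in the image of an incidence variety of dimension strictly smaller than that of the Hilbert scheme, and then to finish by a dimension count. To begin I would translate the geometric condition: by Proposition~\ref{prop cases of N_0} and Corollary~\ref{cor singular condition}, for a line $l$ of the second type and a point $x\in l\cap C$, the curve $C$ has a bad direction at $x$ via $l$ exactly when the image of $\mathcal{T}_{C,x}$ in $\mathscr{N}_{l/X,x}$ lies in $\mathrm{Pos}(\mathscr{N}_{l/X})_x$; and for a second-type line $\mathrm{Pos}(\mathscr{N}_{l/X})=\calO(1)^{n-2}$ has rank $n-2$, one less than $\rk\mathscr{N}_{l/X}=n-1$. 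Pulling this condition back by the linear projection of $\PP(\mathcal{T}_{X,x})$ away from $[\mathcal{T}_{l,x}]$, the bad directions at $x$ determined by $l$ form a hyperplane $H_{l,x}\subset\PP(\mathcal{T}_{X,x})$; so for each pair $(x,[l])$ ``badness'' amounts to a single linear condition on the tangent direction of $C$. (By the same propositions, first-type lines never contribute bad directions, so only lines of the second type matter.)

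The key numerical input is that the locus $F_2\subset F$ of lines of the second type has dimension $\le n-2$: for $n=3$ this is \cite[\S6]{cg}, and in general $F_2$ is the jumping locus $\{[l]:h^1(\mathscr{N}_{l/X}(-1))\ge 1\}$, a determinantal subscheme of $F$ of expected codimension $n-2$. Granting this, the universal line over $F_2$ has dimension $\le n-1$, so the union $Z_2\subset X$ of all second-type lines has dimension $\le n-1$, and together with the first paragraph the ``bad locus''
$$
\Sigma=\overline{\{(x,v)\in\PP(\mathcal{T}_X):v\in H_{l,x}\ \text{for some second-type}\ l\ni x\}}
$$
has dimension $\le (n-1)+(n-2)=2n-3$, i.e.\ codimension $\ge 2$ in $\PP(\mathcal{T}_X)$.

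Then I would run the count. Let $\mathcal{H}$ be the component of the Hilbert scheme of which $C$ is a general member, $\mathcal{I}=\{(C',x):x\in C'\}$, and $\tau:\mathcal{I}\to\PP(\mathcal{T}_X)$ the tangent-lift map $(C',x)\mapsto (x,\mathcal{T}_{C',x})$; a curve has a bad direction somewhere precisely when it lies in the image of $\tau^{-1}(\Sigma)$ under the proper map $\mathcal{I}\to\mathcal{H}$. Fibring $\tau^{-1}(\Sigma)$ over the universal line of $F_2$ (which has dimension $\le n-1$), the fibre over $(x,[l])$ is $\{C'\in\mathcal{H}:x\in C',\ \mathcal{T}_{C',x}\in H_{l,x}\}$, which I claim has dimension $\le \dim\mathcal{H}-n$: indeed, (i) $\dim\{C'\in\mathcal{H}:x\in C'\}\le \dim\mathcal{H}-(n-1)$, and (ii) imposing that the tangent direction at $x$ lie in the hyperplane $H_{l,x}$ drops this by one, since the tangent directions at $x$ of curves of $\mathcal{H}$ through $x$ are not confined to a hyperplane. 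Hence $\dim\tau^{-1}(\Sigma)\le (n-1)+(\dim\mathcal{H}-n)=\dim\mathcal{H}-1<\dim\mathcal{H}$, so its image in $\mathcal{H}$ is a proper closed subset, and a general $C$ has good directions everywhere. (By Corollary~\ref{cor singular condition} this simultaneously yields Lemma~\ref{lem smoothness of Sc}.)

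The hard part will be the two inputs above. First, the bound $\dim F_2\le n-2$---that the second-type locus has its expected codimension $n-2$ for every smooth $X$, not merely a general one. Second, the uniform control of the family of rational curves underlying (i) and (ii): that a suitable component of the space of smooth rational curves of degree $e$ on $X$ is irreducible of the expected dimension $(n-1)e+n-3$ with free general member, and that $\mathcal{I}\to X$ is sufficiently well-behaved over the special (e.g.\ non-free) loci meeting $Z_2$---the latter presumably handled by a stratification ensuring that the fibre bound $\dim\{C':x\in C'\}\le\dim\mathcal{H}-(n-1)$ and the non-degeneracy of tangent directions hold at every relevant $x$, not just a general one.
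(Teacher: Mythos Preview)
Your dimension-count approach is reasonable but genuinely different from the paper's. The paper argues by degeneration and semicontinuity rather than by a direct count: after recording (via \cite[Corollary 7.6]{cg}) that $\dim F_2\le n-2$, it cites \cite[Lemma 1.4]{izadi} for the base case that a general \emph{line} has good directions everywhere, then takes a chain $L=L_1\cup\cdots\cup L_e$ of general first-type lines with nodes chosen off the swept locus $D_2$ of second-type lines, notes that such a chain is smoothable (all components free), and concludes because having a bad direction is a closed condition on the irreducible moduli space of degree-$e$ rational curves (irreducibility from \cite{cs}). Thus the paper only needs to exhibit \emph{one} curve with good directions, and the chain-of-lines construction supplies it.

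This sidesteps precisely the two difficulties you flag at the end. The bound $\dim F_2\le n-2$ you worried about is indeed known for every smooth $X$, not just a general one, by the Clemens--Griffiths reference above. Your fibre bounds (i)--(ii), on the other hand, are fine at a general $x\in X$ but can genuinely fail on the relevant locus: already for $e=1$ the fibre $\{l:x\in l\}=F_x$ jumps from dimension $n-3$ to $n-2$ at an Eckardt point (Lemma~\ref{lem dimension}(i)), so (i) is false there, and a stratification argument would be needed to salvage the global estimate. The paper's smoothing trick trades that stratified analysis for the single known input $e=1$ together with the irreducibility of $\mathcal{H}$, which is why it is short. Your approach could likely be pushed through, but the bookkeeping over the bad strata of $X$ is exactly what the degeneration argument is designed to avoid.
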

\begin{proof}
Given a line $l\subset X$ of second type and a point $x\in l$, the positive part of $\mathscr{N}_{l/X}$ together with $l$ itself determines an $n-1$ dimensional linear subspace $\mathcal{P}_{l,x}$ of $\mathcal{T}_{X,x}$. Let $F_0\subset F$ be the closed subscheme of lines of second type on $X$. By \cite[Corollary 7.6]{cg}, we know that $\dim F_0\leq n-2$. Let $D_{2}\subset X$ be the locus swept by all lines of second type. Then $\dim D_2\leq n-1$. Let $C$ be a general rational curve on $X$. Then $C$ can meet $D_2$ in at most finitely many points $x_i$, $i=1,\ldots,m$ and through each point $x_i$, there is a unique line $l_i$ of second type. The condition that $C$ has a good condition at $x_i$ is equivalent to $\mathcal{T}_{C,x_i}\notin \mathcal{P}_{l_i,x_i}$. We first note that it is proved in \cite[Lemma 1.4]{izadi} that a general line has good directions everywhere. It is proved in \cite{cs} that the moduli space of degree
$e$ rational curves on $X$ is irreducible. Hence to prove the lemma,
we only need to construct one such curve $C$ whose tangent
directions are all good. This can be obtained by smoothing a chain of lines as follows. Take a chain of lines $L=\cup_{j=1}^e L_j$ such that (i) each line $L_j$ is of first type and has good directions everywhere and (ii) none of the nodes of $L$ is on a line of second type. Since all the components $L_j$ are of first type, the chain $L$ is smoothable; see \cite[Chapter II, Theorem 7.9]{kollar}. Then a general smoothing of $L$ has good directions everywhere since having a bad direction is a closed condition.
\end{proof}

\begin{lem}\label{lem smoothness of Fx}
Let $l\subset X$ be a line of first type and $x\in l$ be a point,
then $F_x$ is smooth at $[l]$.
\end{lem}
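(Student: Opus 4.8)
The plan is to compute the Zariski tangent space $\mathcal{T}_{F_x,[l]}$, show it has dimension $n-3$, and observe that this equals the smallest possible dimension of a component of $F_x$ through $[l]$; regularity of the local ring $\calO_{F_x,[l]}$ is then automatic. The first step is to identify $\mathcal{T}_{F_x,[l]}$ with $\mathrm{H}^0(l,\mathscr{N}_{l/X}\otimes\mathcal{I}_{x/l})$, the space of global sections of the normal bundle vanishing at $x$ (here $\mathcal{I}_{x/l}$ is the ideal sheaf of $x$ in $l$). Indeed, a first-order deformation of $l$ inside $X$ is a global section of $\mathscr{N}_{l/X}$, and the deformed line still passes through the fixed point $x$ precisely when that section vanishes at $x$; this is the same deformation-theoretic input (\cite[\S II.1]{kollar}) that produces the identification $\mathcal{T}_{S_C,([l],x)}=\mathrm{H}^0(l,\mathscr{N}_{l/X}\langle\mathcal{T}_{C,x}\rangle)$ recorded just before Lemma~\ref{lem smooth condition}. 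One must of course be careful that the scheme structure on $F_x$ used here is the natural one, i.e. the $(2,3)$ complete intersection in $\PP(\mathcal{T}_{X,x})$, so that $\mathcal{T}_{F_x,[l]}$ is literally the common kernel of the differentials of the two defining equations.

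Next, since $l$ is of first type, $\mathscr{N}_{l/X}\cong\calO_l(1)^{n-3}\oplus\calO_l^2$ by \cite[\S 6]{cg}. As $\mathcal{I}_{x/l}\cong\calO_l(-1)$, twisting gives
$$
\mathscr{N}_{l/X}\otimes\mathcal{I}_{x/l}\cong\calO_l^{n-3}\oplus\calO_l(-1)^2,
$$
so $h^0=n-3$ and $h^1=0$; in particular $\dim\mathcal{T}_{F_x,[l]}=n-3$. (This is the expected value: $\chi(\mathscr{N}_{l/X}\otimes\mathcal{I}_{x/l})=\chi(\mathscr{N}_{l/X})-(n-1)=(2n-4)-(n-1)=n-3=\dim F_x$.)

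To conclude, I would use that $F_x$ is cut out by two equations in $\PP(\mathcal{T}_{X,x})\cong\PP^{n-1}$ (see the notation section, \cite{cg}, \cite{ak}): by Krull's height theorem every irreducible component of $F_x$ passing through $[l]$ has dimension at least $n-3$. Combining this with the general bound $\dim_{[l]}F_x\leq\dim\mathcal{T}_{F_x,[l]}=n-3$ forces equality, so $\calO_{F_x,[l]}$ is a regular local ring of dimension $n-3$; that is, $F_x$ is smooth at $[l]$ (and of the expected dimension there).

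The only point requiring real care is the first step — pinning down the tangent-space identification and checking its compatibility with the chosen scheme structure on $F_x$; everything afterwards is the two-line bundle computation above together with the dimension count. It is also worth noting that the same computation explains why ``first type'' is the correct hypothesis: for $l$ of second type one instead gets $\mathscr{N}_{l/X}\otimes\mathcal{I}_{x/l}\cong\calO_l^{n-2}\oplus\calO_l(-2)$, so $h^0=n-2>n-3$, and $F_x$ is singular at $[l]$.
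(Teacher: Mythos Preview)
Your proof is correct and follows essentially the same route as the paper: identify $\mathcal{T}_{F_x,[l]}$ with $\mathrm{H}^0(l,\mathscr{N}_{l/X}\otimes\calO_l(-x))$, use the first-type splitting to compute $h^0=n-3$, and compare with $\dim F_x$. The paper's version is terser---it simply asserts $\dim F_x=n-3$ (implicitly via the $(2,3)$ complete intersection description in the notation section and Lemma~\ref{lem dimension}) rather than invoking Krull's height theorem---but the argument is the same.
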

\begin{proof}
Deformation theory gives us
$$
\mathcal{T}_{F_x,[l]}=\mathrm{H}^0(l,\mathscr{N}_{l/X}
\otimes\calO_l(-x))=\mathrm{H}^0(l,\calO^{n-3}\oplus\calO(-1)^2)
\cong\C^{n-3}.
$$
Hence $\dim \mathcal{T}_{F_x,[l]}=n-3=\dim F_x$. It follows that
$F_x$ is smooth at $[l]$.
\end{proof}

\begin{proof}[Proof of Lemma \ref{lem smoothness of Sc}]
We first deal with the case when $C$ is a general line. In this
case, we see from \cite[Lemma 1.4]{izadi} that $F_C$ is smooth away
from the point $[C]$ and hence so is $S_C-p^{-1}[C]$. Thus the
singularities of $S_C$ can only appear on $\tilde{C}=p^{-1}[C]\cong
C$. Recall that we have the natural morphism
$q_0=q|_{S_C}:S_C\rightarrow C$ and $\tilde{C}$ is a section of
$q_0$. If $q_0^{-1}y$ is smooth at the point
$\tilde{y}=\tilde{C}\cap q_0^{-1}y$ for some $y\in C$, then $S_C$ is
smooth at $\tilde{y}$. Note that if we identify $q_0^{-1}y$ with
$F_y$, then $\tilde{y}=[C]\in F_y$. It follows that singularities of
$S_C$ can only appear at the points $\tilde{y}\in\tilde{C}$ such
that $F_y$ is singular at the point $[C]$, where
$y=q_0(\tilde{y})\in C$. But this can never happen if $C$ is of
first type by Lemma \ref{lem smoothness of Fx}.

Assume that $e\geq 2$. By Corollary \ref{cor singular condition}, we
only need to show that when $C$ is general, it does not have a bad
direction, which is established in Lemma \ref{lem good direction}.
\end{proof}

\subsection{First order deformations of $C$}

\begin{defn}\label{defn Tv and Sigma_v}
Let $C\subset X$ be a general smooth rational curve of degree $e\geq
2$. Then $C$ is free since it passes a general point of $X$; see \cite[Chapter II, Theorem 3.11]{kollar}. In other words, $\mathscr{N}_{C/X}$ is globally generated. Let $v\in\mathrm{H}^0(C,\mathscr{N}_{C/X})$ be a general section
of the normal bundle. Then $v$ is nowhere vanishing since $\mathscr{N}_{C/X}$ is globally generated with rank $n-1\geq 2$. We define the
rank 2 subbundle $\mathcal{T}_v\subset \mathcal{T}_{X}|_C$,
associated to $v$, to be such that
$$
\mathcal{T}_{v,x}=\{\tau\in\mathcal{T}_{X,x}: \bar{\tau}\in\C
v\subset
\mathscr{N}_{C/X,x}=\frac{\mathcal{T}_{X,x}}{\mathcal{T}_{C,x}}\}
$$
for all $x\in C$. Let
$$
\Sigma_v=\{([l],x)\in S_C: \mathcal{T}_{l,x}\subset \mathcal{T}_{v,x}\}.
$$
\end{defn}

\begin{rmk}\label{rmk Tv and Sigma_v}
Note that we have natural embeddings
$$
\xymatrix{
 S_C\ar@{^(->}[r]^{j_1\quad} &\PP(\mathcal{T}_X|_C)
 &\PP(\mathcal{T}_v)\ar@{_(->}[l]_{\quad j_2},
}
$$
where $j_1$ is defined by $([l],x)\mapsto
\{\mathcal{T}_{l,x}\subset\mathcal{T}_{X,x}\}$ and $j_2$ is induced
by the natural subbundle structure
$\mathcal{T}_v\subset\mathcal{T}_{X}|_C$. We know that $\dim S_C=n-2$, $\dim
\PP(\mathcal{T}_{X}|_C) = n$ and $\dim \PP(\mathcal{T}_v)=2$.
Furthermore, the set $\Sigma_v$ is precisely the intersection of
$\PP(\mathcal{T}_v)$ and $S_C$.
\end{rmk}

\begin{lem}\label{lem Sigma_v}
If $v$ is general, then $S_C$
intersect $\PP(\mathcal{T}_v)$ transversally in the set
$\Sigma_v=\{([L_i],x_i)\}_{i=1}^{r_0}$, where $x_i\neq x_j$ for $i\neq j$. In particular, $\Sigma_v$ is
finite.
\end{lem}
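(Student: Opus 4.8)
The plan is to view $\Sigma_v$ as the intersection of $S_C$ with a general member of a base-point-free family of surfaces inside $\PP(\mathcal{T}_X|_C)$, and then run a Bertini/generic-smoothness argument. By Remark \ref{rmk Tv and Sigma_v} we have $\Sigma_v=S_C\cap\PP(\mathcal{T}_v)$ as subschemes of $\PP(\mathcal{T}_X|_C)$, and since $\dim S_C+\dim\PP(\mathcal{T}_v)=(n-2)+2=n=\dim\PP(\mathcal{T}_X|_C)$, any transversal intersection is automatically zero-dimensional and reduced. So it is enough to show that for general $v$ the intersection $S_C\cap\PP(\mathcal{T}_v)$ is transversal (which in particular makes $\Sigma_v$ finite) and that $q_0$ is injective on $\Sigma_v$. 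It is convenient to record first that $S_C$ is disjoint from the section $Z_0:=\PP(\mathcal{T}_C)\subset\PP(\mathcal{T}_X|_C)$: a point of $S_C\cap Z_0$ would be a line on $X$ tangent to $C$, contradicting Lemma \ref{lem transversal l and C}. Hence the linear projection of $\PP(\mathcal{T}_X|_C)$ away from $Z_0$ restricts to a morphism $\rho:S_C\to\PP(\mathscr{N}_{C/X})$, with $\rho([l],x)$ the image of $\mathcal{T}_{l,x}$ in $\PP(\mathscr{N}_{C/X,x})$; a general (hence nowhere vanishing, $\mathscr{N}_{C/X}$ being globally generated) section $v\in W:=\mathrm{H}^0(C,\mathscr{N}_{C/X})$ gives a section $s_v:C\to\PP(\mathscr{N}_{C/X})$, and $\Sigma_v=\rho^{-1}(s_v(C))$.

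For transversality I would argue by generic smoothness on the universal family. Since $C$ is free, $\mathscr{N}_{C/X}$ is globally generated, so the evaluation $W\otimes\calO_C\to\mathscr{N}_{C/X}$ is surjective; consequently the incidence variety $\mathcal{D}=\{([v],z)\in\PP(W)\times(\PP(\mathcal{T}_X|_C)\setminus Z_0):z\in\PP(\mathcal{T}_v)\}$ is a projective bundle over $\PP(\mathcal{T}_X|_C)\setminus Z_0$ of relative dimension $\dim W-n+1$ (the fibre over $(x,[\tau])$ is the linear space of sections $v$ with $[v(x)]=[\bar\tau]$, cut out by $n-2$ independent conditions). In particular $\mathcal{D}\to\PP(\mathcal{T}_X|_C)\setminus Z_0$ is smooth and surjective. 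As $S_C$ is smooth (Lemma \ref{lem smoothness of Sc}) and $S_C\subset\PP(\mathcal{T}_X|_C)\setminus Z_0$, the fibre product $\mathcal{D}\times_{\PP(\mathcal{T}_X|_C)}S_C$ is smooth, of dimension $(n-2)+(\dim W-n+1)=\dim W-1=\dim\PP(W)$; by generic smoothness (we work in characteristic zero) a general fibre of its projection to $\PP(W)$ is smooth of dimension zero, i.e. for general $v$ the scheme $\Sigma_v$ is finite and reduced and the intersection $S_C\cap\PP(\mathcal{T}_v)$ is transversal at each of its points.

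It remains to separate the points of $\Sigma_v$ over $C$. If $([l_1],x)\neq([l_2],x)$ both lie in $\Sigma_v$, then $\mathcal{T}_{l_1,x},\mathcal{T}_{l_2,x}\subset\mathcal{T}_{v,x}$; as $\mathcal{T}_{v,x}$ is a plane containing $\mathcal{T}_{C,x}$ this forces $\rho([l_1],x)=\rho([l_2],x)$, so $\Sigma_v$ meets the locus $B\subsetneq S_C$ on which $\rho$ is not injective. I claim $\rho$ is generically injective: over a general $x\in C$, $\rho|_{F_x}$ is the linear projection of $F_x\subset\PP(\mathcal{T}_{X,x})$ from the point $[\mathcal{T}_{C,x}]$, which does not lie on $F_x$ (again by Lemma \ref{lem transversal l and C}); since $F_x$ is an irreducible $(2,3)$ complete intersection of dimension $\dim F_x=n-3$ (Lemma \ref{lem dimension}), a line through $[\mathcal{T}_{C,x}]$ meeting $F_x$ meets it in a single point for a general such line, and for $n=3$ this reduces to the statement that the finitely many lines of $X$ through a general point of $C$ have pairwise distinct directions modulo $\mathcal{T}_{C,x}$, which holds for general $C$. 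Hence $\dim\rho(B)\leq n-3$, and the incidence $\{([v],x)\in\PP(W)\times C:(x,[v(x)])\in\rho(B)\}$ maps onto $\rho(B)$ with fibres of dimension $\dim W-n+1$ (sections through a prescribed point of a fibre of $\PP(\mathscr{N}_{C/X})$), so has dimension at most $(n-3)+(\dim W-n+1)=\dim W-2<\dim\PP(W)$; its image, the set of $v$ for which $\Sigma_v$ has two points over a common point of $C$, is therefore a proper subvariety of $\PP(W)$, and a general $v$ avoids it.

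The step I expect to be the main obstacle is securing transversality in the clean form above, i.e. the smoothness and reducedness of the total space $\mathcal{D}\times_{\PP(\mathcal{T}_X|_C)}S_C$ over $\PP(W)$: one must be sure that the base locus $Z_0$ of the family is genuinely disjoint from $S_C$ and that the loci on $S_C$ lying over Eckardt points or over lines of second type do not spoil the argument, which is exactly where the generality of $C$ enters through Lemmas \ref{lem smoothness of Sc} and \ref{lem transversal l and C}. Once transversality and the generic injectivity of $\rho$ are in place, separating the base points is the routine dimension count indicated above.
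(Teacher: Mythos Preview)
Your proof is correct and takes a different route from the paper's. For finiteness, the paper argues fibrewise over $C$: since $F_x\subset\PP(\mathcal{T}_{X,x})$ has codimension $2$ and, by global generation of $\mathscr{N}_{C/X}$, the line $\PP(\mathcal{T}_{v,x})$ is a general line through the fixed point $[\mathcal{T}_{C,x}]$, it misses $F_x$ for all but finitely many $x$ and meets $F_x$ in finitely many points at those. Your generic-smoothness argument on the incidence $\mathcal{D}\times_{\PP(\mathcal{T}_X|_C)}S_C\to\PP(W)$ is more systematic and yields honest reducedness of $\Sigma_v$, which the paper's fibrewise argument does not quite supply.

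For separating the $x_i$, both approaches hinge on the same point: that the projection of $F_x$ from $[\mathcal{T}_{C,x}]$ is generically one-to-one (equivalently, your $\rho$ is generically injective). The paper phrases this as ``a general deformation of $v$ keeping $\PP(\mathcal{T}_{v,x})\cap F_x\neq\emptyset$ makes the intersection a single point''; neither the paper nor you fully justifies it, and your reason (``$F_x$ irreducible, so a general line through $p$ meeting $F_x$ meets it once'') is not a proof as stated---irreducibility alone does not rule out multisecants through $p$. The standard fix is a secant dimension count: the incidence $\{(p,q,q'):q\neq q'\in F_x,\ p\in\overline{qq'}\}$ has dimension $2(n-3)+1$, so for general $p\in\PP^{n-1}$ its fibre has dimension at most $n-4<\dim F_x$, forcing $\pi_p|_{F_x}$ to be birational; then the tangent lift of a general free $C$ is not contained in the proper closed locus of bad centres. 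With this supplied, your bound $\dim\rho(B)\le n-3$ and the ensuing count go through and are equivalent in content to the paper's deformation argument.
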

\begin{proof}
By \cite[Lemma 6.5]{cg},
we see that $S_C$ is a fiberwise (2,3) complete intersection of
$\PP(\mathcal{T}_X|_C)$. In other words, for each point $x\in C$, there exist
\[
\phi_{2,x}:\Sym^2(\mathcal{T}){X,x})\rightarrow \C\quad \text{ and }\quad \phi_{3,x}:\Sym^3(\mathcal{T}_{X,x})\rightarrow \C
\]
such that $q_0^{-1}x = F_x\subset \PP(\mathcal{T}_{X,x})$ is defined by $\phi_{2,x}=0$ and $\phi_{3,x}=0$. The tangent direction $\mathcal{T}_{C,x}$ defines a point $\tilde{x}=[\mathcal{T}_{C,x}]\in \PP(\mathcal{T}_{v,x})\subset \PP(\mathcal{T}_{X,x})$. Lemma \ref{lem transversal l and C} implies that the point $\tilde{x}$ is not contained in $F_x$. Note that $\PP(\mathcal{T}_{v,x})$ can be viewed as a line in $\PP(\mathcal{T}_{X,x})$ passing through the point $x$. Since $\mathscr{N}_{C/X}$ is globally generated, we see that for a fixed $x\in C$ the line $\PP(\mathcal{T}_{v,x})$ is a general line passing through $x$. Now since $F_x\subset \PP(\mathcal{T}_{X,x})$ has codimension 2, the line does not meet $F_x$. This shows that when $v$ is general, there are at most finitely many points $x_i\in C$ such that $F_{x_i}$ meets $\PP(\mathcal{T}_{v,x_i})$. For any of such point $x_i$, since $\tilde{x}_i\notin F_x$, the line $\PP(\mathcal{T}_{v,x_i})$ meets $F_{x_i}$ in at most finitely many points. This proves that $S_C$ intersects $\PP(\mathcal{T}_v)$ transversally. If we have $x_i=x_j=x$, then we can make a small deformation of $v$ requiring $\PP(\mathcal{T}_{v,x})\cap F_x\neq\emptyset$. Again since $\mathscr{N}_{C/X}$ is globally generated, a general deformation as above will satisfy that $\PP(\mathcal{T}_{v,x})\cap F_x$ is a single point. This way, we move one of the intersection points above $x$ to a nearby fiber. Hence we have $x_i\neq x_j$ for $i\neq j$.
\end{proof}

\begin{rmk}\label{rmk size of Sigma_v}
The element $v\in\mathrm{H}^0(C,\mathscr{N}_{C/X})$ can be viewed as
a first order deformation of $C$. Note that we have a canonical
identification
$$
\mathcal{T}_{\mathrm{Hilb}(X),[C]}=\mathrm{H}^0(C,\mathscr{N}_{C/X}).
$$
Since $[C]\in\mathrm{Hilb}^e(X)$ is a smooth point, we can find a
smooth pointed curve $\varphi:(T,0)\longrightarrow
(\mathrm{Hilb(X)},[C])$ such that $d\varphi(\mathcal{T}_{T,0})=\C\,
v$. The curve $T$ parameterizes a 1-dimensional family of curves on
$X$,
$$
\xymatrix{
  \mathscr{C}\ar[r]\ar[d] &X\\
  T &
}
$$
such that $\mathscr{C}_0=C$. Let $\Sigma_t=\{L_{i,t}\}$, $t\neq 0$,
be the set of all incidence lines of $\mathscr{C}_t$ and $C$
(meaning lines meeting both $\mathscr{C}_t$ and $C$). We have proved
in \cite[Lemma 3.10]{shen} that the cardinality of $\Sigma_t$ is
equal to $5\deg(\mathscr{C}_t)\deg(C)=5e^2$. If we take the limit as
$t\to 0$ (see \cite[\S11.1]{fulton}), then we see that $\Sigma_t$
specializes to the set $\Sigma_v$. Hence we get
$r_0=|\Sigma_v|=5e^2$. We will write
$$
\Sigma_v=\{([L_i],x_i): i=1,2,\ldots, 5e^2\}.
$$
\end{rmk}

\begin{lem}\label{lem restricted normal bundle L_i}
If $C$ is general, then $L_i$ is of first type and
$$
\mathscr{N}_{L_i/X}\langle \mathcal{T}_{C,x_i}\rangle \cong
\calO^{n-2} \oplus\calO(-1),\quad 1\leq i\leq 5e^2.
$$
Furthermore, we have a decomposition
\begin{equation}\label{eq decomposition of NN}
\mathrm{NN}(\mathscr{N}_{L_i/X}\langle
\mathcal{T}_{C,x_i}\rangle)_{x_i} =
\mathrm{Pos}(\mathscr{N}_{L_i/X})_{x_i} \oplus \mathcal{T}_{C,x_i},
\end{equation}
which is canonical upto a scalar multiplication on
$\mathrm{Pos}(\mathscr{N}_{L_i/X})_{x_i}$.
\end{lem}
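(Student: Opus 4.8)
The plan is to deduce the whole statement from Proposition~\ref{prop cases of N_0} by showing that every point $([L_i],x_i)\in\Sigma_v$ falls into case~(i) of that proposition. Since $C$, and hence the section $v$, is general, $S_C$ is smooth (Lemma~\ref{lem smoothness of Sc}), so by Corollary~\ref{cor singular condition} no point of $S_C$ is in case~(iv). Let $Z'\subset S_C$ denote the closed locus of points lying in case~(ii) or~(iii); I claim $Z'\subsetneq S_C$, so that $\dim Z'\le n-3$. Indeed a general point of $S_C$ is in case~(i): the lines of second type meeting $C$ sweep out a proper closed subset of $F_C$ (their locus in $F$ lies in $F_0$, of dimension $\le n-2$ by \cite[Corollary 7.6]{cg}, so they cover a proper subvariety of $X$, while $C$ passes through a general point of $X$); and for a line $l$ of first type through $x$, the condition that the image of $\mathcal{T}_{C,x}$ in $\mathscr{N}_{l/X,x}$ lie in the codimension two subspace $\mathrm{Pos}(\mathscr{N}_{l/X})_x$ is a proper condition on $[l]\in F_x$ (automatic when $n=3$, where $\mathrm{Pos}(\mathscr{N}_{l/X})=0$ and Lemma~\ref{lem transversal l and C} applies).

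Next I would show that for general $v$ the finite set $\Sigma_v=S_C\cap\PP(\mathcal{T}_v)$ avoids $Z'$, using the mobility of the surfaces $\PP(\mathcal{T}_v)\subset\PP(\mathcal{T}_X|_C)$ exactly as in the proof of Lemma~\ref{lem Sigma_v}. Form the incidence variety $\mathcal{W}=\{(v,w):w\in\PP(\mathcal{T}_v)\}$, with its projections to $\PP(\mathrm{H}^0(C,\mathscr{N}_{C/X}))$ (fibres the surfaces $\PP(\mathcal{T}_v)$, of dimension $2$) and to $\PP(\mathcal{T}_X|_C)$. For $w=([l],x)\in S_C$ we have $[l]\ne[\mathcal{T}_{C,x}]$ by Lemma~\ref{lem transversal l and C}, so $w\in\PP(\mathcal{T}_v)$ if and only if $v(x)$ is proportional to the nonzero image of $\mathcal{T}_{l,x}$ in $\mathscr{N}_{C/X,x}$; since $\mathscr{N}_{C/X}$ is globally generated this is a codimension $n-2$ condition on $v$. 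Hence the preimage of $Z'$ in $\mathcal{W}$ has dimension at most $\dim Z'+\dim\PP(\mathrm{H}^0(C,\mathscr{N}_{C/X}))-(n-2)<\dim\PP(\mathrm{H}^0(C,\mathscr{N}_{C/X}))$, so its image in $\PP(\mathrm{H}^0(C,\mathscr{N}_{C/X}))$ omits a general $v$. Consequently every $([L_i],x_i)$ is in case~(i) of Proposition~\ref{prop cases of N_0}, which gives simultaneously that $L_i$ is of first type and that $\mathscr{N}_{L_i/X}\langle\mathcal{T}_{C,x_i}\rangle\cong\calO^{n-2}\oplus\calO(-1)$.

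For the decomposition~\eqref{eq decomposition of NN}, the commutative diagram built in the proof of Proposition~\ref{prop cases of N_0} (case~(i)) already carries everything needed. Its left column presents $\mathscr{N}_{L_i/X}\langle\mathcal{T}_{C,x_i}\rangle$ as an extension of $\calO_{L_i}\oplus\calO_{L_i}(-1)$ by $\mathrm{Pos}(\mathscr{N}_{L_i/X})\otimes\calO_{L_i}(-x_i)\cong\calO_{L_i}^{n-3}$, while its top row identifies the $\calO_{L_i}$-summand of $\calO_{L_i}\oplus\calO_{L_i}(-1)$ with a subsheaf of $\mathscr{N}_{L_i/X}/\mathrm{Pos}(\mathscr{N}_{L_i/X})$ whose fibre at $x_i$ is canonically identified with $\mathcal{T}_{C,x_i}$ (the relevant image being injective in case~(i)). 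Applying the operation $\mathrm{NN}(-)$ to the left column---legitimate since $\mathrm{Ext}^1_{\PP^1}(\calO,\calO^{\oplus(n-3)})=0$, so that $\mathrm{NN}(\mathscr{N}_{L_i/X}\langle\mathcal{T}_{C,x_i}\rangle)$ is the preimage of $\mathrm{NN}(\calO_{L_i}\oplus\calO_{L_i}(-1))=\calO_{L_i}$---and then restricting to the fibre over $x_i$ yields a short exact sequence $0\to\mathrm{Pos}(\mathscr{N}_{L_i/X})_{x_i}\otimes(\mathfrak{m}_{x_i}/\mathfrak{m}_{x_i}^2)\to\mathrm{NN}(\mathscr{N}_{L_i/X}\langle\mathcal{T}_{C,x_i}\rangle)_{x_i}\to\mathcal{T}_{C,x_i}\to0$; a choice of splitting then gives~\eqref{eq decomposition of NN}, the twist by the cotangent line $\mathfrak{m}_{x_i}/\mathfrak{m}_{x_i}^2$ of $L_i$ at $x_i$ being exactly the stated scalar ambiguity on the $\mathrm{Pos}$-summand. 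The one place I expect real work is the dimension count of the second paragraph---in particular checking that the family $\{\PP(\mathcal{T}_v)\}_v$ moves enough, which rests on the global generation of $\mathscr{N}_{C/X}$; once each $([L_i],x_i)$ is placed in case~(i), the splitting type and~\eqref{eq decomposition of NN} are a formal diagram chase.
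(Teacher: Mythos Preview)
Your argument for placing every $([L_i],x_i)\in\Sigma_v$ in case~(i) of Proposition~\ref{prop cases of N_0} is correct and in fact considerably more detailed than the paper's, which simply asserts that ``when $C$ and also $v$ are general, all the lines $L_i$ are of first type and $\mathcal{T}_{C,x_i}$ is not pointing to the positive part of $\mathscr{N}_{L_i/X}$'' and then invokes case~(i). Your incidence-variety dimension count is the right way to justify this.

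There is one point where your write-up falls short of the claim. For the decomposition~\eqref{eq decomposition of NN} you correctly produce the short exact sequence
\[
0\;\longrightarrow\;\mathrm{Pos}(\mathscr{N}_{L_i/X})_{x_i}\otimes(\mathfrak{m}_{x_i}/\mathfrak{m}_{x_i}^2)\;\longrightarrow\;\mathrm{NN}(\mathscr{N}_{L_i/X}\langle\mathcal{T}_{C,x_i}\rangle)_{x_i}\;\longrightarrow\;\mathcal{T}_{C,x_i}\;\longrightarrow\;0,
\]
but then invoke ``a choice of splitting.'' An arbitrary splitting is not canonical: the splittings form a torsor over $\mathrm{Hom}(\mathcal{T}_{C,x_i},\mathrm{Pos}(\mathscr{N}_{L_i/X})_{x_i})\cong\C^{n-3}$, far more freedom than the one-parameter scalar ambiguity the lemma asserts. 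The cotangent twist you identify accounts only for the scaling of the $\mathrm{Pos}$-summand once a splitting has been fixed, not for the choice of splitting itself. The paper supplies the missing canonical section: the composite $\mathcal{T}_{C,x_i}\hookrightarrow\mathcal{T}_{X,x_i}\to\mathscr{N}_{L_i/X,x_i}$ factors through the evaluation map $\mathrm{NN}(\mathscr{N}_{L_i/X}\langle\mathcal{T}_{C,x_i}\rangle)_{x_i}\to\mathscr{N}_{L_i/X,x_i}$ and thereby singles out a specific $\theta_i\colon\mathcal{T}_{C,x_i}\to\mathrm{NN}(\mathscr{N}_{L_i/X}\langle\mathcal{T}_{C,x_i}\rangle)_{x_i}$ splitting the sequence; after that, the only residual choice is indeed the local parameter $t_i$. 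So your outline is on the right track, but you should replace ``a choice of splitting'' with this natural tangent-direction section to get the canonicity statement.
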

\begin{proof}
When $C$ and also $v$ are general, all the lines $L_i$ are of first
type and $\mathcal{T}_{C,x_i}$ is not pointing to the positive part
of $\mathscr{N}_{L_i/X}$. Hence (i) of Proposition \ref{prop cases
of N_0} applies. Consider the following short exact sequence
\begin{equation}\label{eq sequence 1}
\xymatrix{
 0\ar[r] &\mathrm{Pos}(\mathscr{N}_{L_i/X})_{x_i} \ar[r]^{\cdot t_i\quad}
 &\mathrm{NN}(\mathscr{N}_{L_i/X}\langle \mathcal{T}_{C,x_i}\rangle)_{x_i}
 \ar[r] & \overline{\mathcal{T}}_{C,x_i}\ar[r] &0,
 }
\end{equation}
where $t_i$ is a local uniformizer of $C$ at the point $x_i$ and
$\overline{\mathcal{T}}_{C,x_i}$ is the image of
$\mathcal{T}_{C,x_i}$ in $\mathscr{N}_{L_i/X,x_i}$. The natural
homomorphism
$$
\mathcal{T}_{C,x_i}\hookrightarrow \mathcal{T}_{X,x_i}\rightarrow
\mathscr{N}_{L_i/X,x_i}
$$
factors through $\mathrm{NN}(\mathscr{N}_{L_i/X}\langle
\mathcal{T}_{C,x_i}\rangle)_{x_i} \longrightarrow
\mathscr{N}_{L_i/X,x_i}$ and gives a homomorphism
$\theta_i:\overline{\mathcal{T}}_{C,x_i}\rightarrow
\mathscr{N}_{L_i/X}\langle \mathcal{T}_{C,x_i}\rangle)_{x_i}$. This
homomorphism $\theta_i$ canonically splits the sequence \eqref{eq
sequence 1}. After identifying $\overline{\mathcal{T}}_{C,x_i}$ with
${\mathcal{T}}_{C,x_i}$, we get the decomposition. Note that the
only choice made here is the local parameter $t_i$. A different
choice of $t_i$ induces a scalar multiplication on
$\mathrm{Pos}(\mathscr{N}_{L_i/X})_{x_i}$.
\end{proof}

\begin{lem}\label{lem zero dimensional}
Let $V\subset\mathcal{T}_{X,x}$ be a 3-dimensional vector space
containing $\mathcal{T}_{v,x}$, for some $x\in C$. We view $\PP(V)$
as the space of lines tangent to $X$ at $x$ in a direction of $V$.
Then $F_x\cap\PP(V)$ is a zero dimensional scheme of length 6 if
$x\neq x_i$. If $x=x_i$ for some $1\leq i\leq 5e^2$, then
$F_x\cap\PP(V)$ can have at most one component of dimension 1 and
this component has to be a line (on $\PP(V)=\PP^2$) if it exists.
\end{lem}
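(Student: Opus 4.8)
The plan is to realise $F_x\cap\PP(V)$ as an intersection of plane curves and to play it off against the line $\ell:=\PP(\mathcal{T}_{v,x})\subset\PP(V)=\PP^2$ cut out on $\PP(V)$ by the rank two subbundle $\mathcal{T}_v\subset\mathcal{T}_{X}|_C$. Recall from the proof of Lemma~\ref{lem Sigma_v} that $F_x\subset\PP(\mathcal{T}_{X,x})=\PP^{n-1}$ is the zero locus of forms $\phi_{2,x}$ and $\phi_{3,x}$ of degrees $2$ and $3$, and from Remark~\ref{rmk Tv and Sigma_v} that $\Sigma_v=S_C\cap\PP(\mathcal{T}_v)$. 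The basic observation is that the fibre of $\Sigma_v$ over a point $x\in C$ (under $q_0$) is precisely $F_x\cap\ell$; since by Lemma~\ref{lem Sigma_v} the points of $\Sigma_v$ are the $([L_i],x_i)$ with the $x_i$ pairwise distinct, this fibre is empty when $x\neq x_i$ for all $i$, and is the single point $[L_i]$ when $x=x_i$. Moreover the transversality of $S_C$ and $\PP(\mathcal{T}_v)$ asserted in Lemma~\ref{lem Sigma_v} forces $F_{x_i}\cap\ell$ to be a \emph{reduced} point, that is, a length one subscheme of $\ell$; I will come back to this at the end.

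Assume first that $x\neq x_i$ for all $i$, so $F_x\cap\ell=\emptyset$. Then $F_x\cap\PP(V)$ must be finite: if it had a component of positive dimension, that component would be a curve in $\PP(V)\cong\PP^2$ (or all of $\PP^2$) and would therefore meet the line $\ell$, giving $F_x\cap\ell\neq\emptyset$, a contradiction. Hence $F_x\cap\PP(V)$ is zero dimensional; in particular neither $\phi_{2,x}|_V$ nor $\phi_{3,x}|_V$ can vanish identically (else $F_x\cap\PP(V)$ would contain a curve, or be all of $\PP^2$), and they can have no common factor (else their common zero locus would be a curve). So $F_x\cap\PP(V)$ is the intersection in $\PP^2$ of a conic and a cubic with no common component, and by B\'ezout it has length $2\cdot 3=6$.

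Now assume $x=x_i$, so $F_{x_i}\cap\ell=\{[L_i]\}$ is a reduced point. Suppose for contradiction that the reduced one dimensional part $W$ of $F_{x_i}\cap\PP(V)$ has degree $\geq 2$; this is exactly the failure of the assertion, covering both the case of two distinct one dimensional components and that of a single one dimensional component which is not a line. Then $\ell\not\subset W$, since $\ell\subset W\subset F_{x_i}$ would make $F_{x_i}\cap\ell$ one dimensional; so $W$ and $\ell$ meet properly in $\PP^2$ and by B\'ezout the scheme $W\cap\ell$ has length $\deg W\geq 2$. On the other hand $W\subset F_{x_i}\cap\PP(V)$ yields an inclusion of subschemes $W\cap\ell\subset F_{x_i}\cap\ell$ of $\ell$, whence $W\cap\ell$ has length at most $1$. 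This contradiction shows the one dimensional part of $F_{x_i}\cap\PP(V)$ is at most a single line, as claimed.

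The only step needing genuine care is the reducedness of $F_{x_i}\cap\ell$ invoked above; everything else is B\'ezout in the plane together with the elementary fact that every curve in $\PP^2$ meets every line. To get the reducedness I would argue by base change: inside the smooth variety $\PP(\mathcal{T}_{X}|_C)$ the scheme $S_C\cap\PP(\mathcal{T}_v)$ is, by the transversality in Lemma~\ref{lem Sigma_v}, reduced of dimension zero at the point $([L_i],x_i)$; taking the fibre of this scheme over $x_i\in C$ commutes with the fibre products cutting it out, and produces $F_{x_i}\cap\PP(\mathcal{T}_{v,x_i})=F_{x_i}\cap\ell$ equipped with its length one structure. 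Implicitly one uses here that a general $C$ avoids the finitely many Eckardt points of Lemma~\ref{lem dimension}, so that each $F_{x_i}$ is the honest $(2,3)$ complete intersection cut out by $\phi_{2,x_i}$ and $\phi_{3,x_i}$.
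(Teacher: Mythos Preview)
Your argument is correct and follows the same line as the paper's: both reduce to the observation that any positive-dimensional component of $F_x\cap\PP(V)$ must meet the line $\ell=\PP(\mathcal{T}_{v,x})$, and that $F_x\cap\ell$ is exactly the fibre of $\Sigma_v$ over $x$, hence empty for $x\neq x_i$ and the single point $[L_i]$ for $x=x_i$. The paper's proof of the $x=x_i$ case is terser---it passes directly from ``$B$ can only meet $\ell$ in the point $[L_i]$'' to ``$\deg B=1$'' without addressing intersection multiplicity---whereas you make this step rigorous by invoking the transversality in Lemma~\ref{lem Sigma_v} to get that $F_{x_i}\cap\ell$ has length one and then applying B\'ezout to the full one-dimensional part $W$; this also cleanly handles the ``at most one component'' clause, which the paper leaves implicit.
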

\begin{proof}
If we show that $F_x\cap\PP(V)$ is zero dimensional, then it of
length $6$ since it is a (2,3)-complete intersection in
$\PP(V)\cong\PP^2$. Assume that $B\subset\PP(V)\cap F_x$ is a curve.
Then $B\cap\PP(\mathcal{T}_{v,x})$ is nonempty, where
$\PP(\mathcal{T}_{v,x})$ is viewed as a line on $\PP(V)$. Or in
other words, $\PP(\mathcal{T}_{v,x})$ contains a line $L$ of $X$.
This exactly means that $(L,x)\in \Sigma_v$. Hence such $B$ does not
exist if $x\neq x_i$. If $x=x_i$, we have already seen that
$\PP(\mathcal{T}_{v,x_i})$ contains the unique line $L_i$ of $X$.
Hence $B\subset \PP(V)$ is of degree 1, namely a line, if it exists. This is because $B$ can only meet the line $\PP(\mathcal{T}_{v,x})$ in the point $[L_i]$.
\end{proof}

\subsection{The rational map $\rho$}

Let $\mathscr{F}$ be the quotient of $\mathcal{T}_{X}|_C$ by
$\mathcal{T}_v$. The natural quotient homomorphism
$\mathcal{T}_{X}|_C\rightarrow \mathscr{F}$ induces a rational map
$\beta:\PP(\mathcal{T}_X|_C)\dashrightarrow \PP(\mathscr{F})$. By
definition, the indeterminacy locus of $\beta$ is exactly
$\PP(\mathcal{T}_v)$. Let $\rho=\beta\circ j_1:S_C\dashrightarrow
\PP(\mathscr{F})$, where $j_1:S_C\hookrightarrow \PP(T_X|_C)$ is the
closed immersion induced by $([l],x)\mapsto[\mathcal{T}_{l,x}\subset
\mathcal{T}_{X,x}]$; see Remark \ref{rmk Tv and Sigma_v}. Then the
indeterminacy locus of $\rho$ is the set $\Sigma_v$. This shows that
we have a morphism
$$
\rho:S_C-\Sigma_v\rightarrow \PP(\mathscr{F}).
$$

\begin{lem}\label{lem pull back by rational map}
Let $f:Y_1\dashrightarrow Y_2$ be a rational map between two smooth
projective varieties whose indeterminacy locus $\Sigma\subset Y_1$ is
a finite set of closed points. Let $\Gamma_f\subset
(Y_1-\Sigma)\times Y_2$ be the graph of $f$ and
$\bar{\Gamma}_f\subset Y_1\times Y_2$ the closure of $\Gamma_f$. We
define the pull-back $f^*$ and push-forward $f_*$ (on either the
cohomology groups or the Chow groups) as
$$
f^*\fa = p_{1,*}(\bar{\Gamma}_f\cdot p_2^*\fa),\qquad f_*\fa =
p_{2,*}(\bar{\Gamma}_f\cdot p_1^*\fa),
$$
where $p_i:Y_1\times Y_2\rightarrow Y_i$ is the projection to the
corresponding factor. Then we have $f^*(\fa^k)=(f^*\fa)^k$, for all
$\fa\in\Pic(Y_2)$ and $k<\dim Y_1$.
\end{lem}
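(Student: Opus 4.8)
The plan is to reduce the statement to a purely set-theoretic/scheme-theoretic understanding of $\bar\Gamma_f$ near the indeterminacy locus $\Sigma$, and then use the projection formula repeatedly. The key observation is that since $\Sigma\subset Y_1$ is a \emph{finite} set of closed points, the projection $p_1|_{\bar\Gamma_f}:\bar\Gamma_f\to Y_1$ is an isomorphism over $Y_1-\Sigma$ and has fibers of dimension $\leq\dim Y_2$ over each point of $\Sigma$; in particular $\bar\Gamma_f$ is irreducible of dimension $\dim Y_1$, equal to the closure of the graph, and $p_1|_{\bar\Gamma_f}$ is a birational proper morphism. Thus $f^*\fa = p_{1,*}(\bar\Gamma_f\cdot p_2^*\fa)$ is, up to the correction supported over $\Sigma$, just the ``honest'' pull-back along the resolution $\bar\Gamma_f\to Y_1$. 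So I first record: for $\fa\in\Pic(Y_2)$, writing $\tilde p_1=p_1|_{\bar\Gamma_f}$ and $\tilde p_2=p_2|_{\bar\Gamma_f}$, one has $f^*\fa=\tilde p_{1,*}\tilde p_2^*\fa$ in $\mathrm{CH}^1(Y_1)$ (resp.\ $\mathrm H^2$), and more generally $f^*(\fa^k)=\tilde p_{1,*}\tilde p_2^*(\fa^k)=\tilde p_{1,*}\big((\tilde p_2^*\fa)^k\big)$ for every $k$, simply because pull-back is a ring homomorphism on $\bar\Gamma_f$.

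The heart of the matter is then to compare $\tilde p_{1,*}\big((\tilde p_2^*\fa)^k\big)$ with $\big(\tilde p_{1,*}\tilde p_2^*\fa\big)^k=(f^*\fa)^k$. Set $L=\tilde p_2^*\fa$ and $D=\tilde p_1^*(f^*\fa)-L$ on $\bar\Gamma_f$; by construction $D$ is a divisor class whose image under $\tilde p_{1,*}$ is $0$, and $D$ is supported over $\Sigma$, i.e.\ $\tilde p_1(\mathrm{Supp}\,D)\subset\Sigma$ is finite, so $D$ restricted to the complement of $\tilde p_1^{-1}(\Sigma)$ is trivial. The projection formula gives $(f^*\fa)^k=\tilde p_{1,*}\big((\tilde p_1^*(f^*\fa))^k\big)=\tilde p_{1,*}\big((L+D)^k\big)$, so it suffices to show $\tilde p_{1,*}\big((L+D)^k-L^k\big)=0$. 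Expanding, every term in $(L+D)^k-L^k$ contains a factor of $D$, hence is a class supported on $\tilde p_1^{-1}(\Sigma)$, which has dimension $\leq k$ when $k<\dim Y_1$ is not yet obviously what we want — here I use that each fiber of $\tilde p_1$ over a point of $\Sigma$ has dimension $\leq\dim Y_2$; but actually the cleaner route is: such a term pushes forward to a cycle supported on the finite set $\Sigma$, hence lies in $\mathrm{CH}_0$ of a finite set, and it has dimension $\dim Y_1-k>0$ as a cycle class, so it must vanish. I expect this dimension-counting step to be the main obstacle: one must argue carefully that $\tilde p_{1,*}$ of a class of the form (anything)$\cdot D^j$ with $j\geq 1$ is supported on $\Sigma$ and has pure dimension $\dim Y_1-k\geq 1$, so it is zero for dimension reasons. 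Concretely, if $Z\subset\bar\Gamma_f$ is any subvariety meeting $\mathrm{Supp}\,D$, then $\tilde p_1(Z)$ has dimension $\leq \dim Z$, and when we intersect down to codimension $k$ and push forward we land in $\mathrm{CH}_{\dim Y_1-k}(\Sigma)=0$ since $\Sigma$ is $0$-dimensional and $\dim Y_1-k\geq 1$.

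To make the last step rigorous I would argue as follows. Any class $\xi\in\mathrm{CH}^k(\bar\Gamma_f)$ (resp.\ $\mathrm H^{2k}$) with $\mathrm{Supp}\,\xi\subset \tilde p_1^{-1}(\Sigma)$ satisfies $\tilde p_{1,*}\xi\in\mathrm{CH}_{\dim Y_1-k}(\tilde p_1(\mathrm{Supp}\,\xi))\subset\mathrm{CH}_{\dim Y_1-k}(\Sigma)$ by the standard functoriality of proper push-forward through the closed point $\Sigma\hookrightarrow Y_1$. Since $\Sigma$ is a finite set of points, $\mathrm{CH}_j(\Sigma)=0$ for all $j\geq 1$, so $\tilde p_{1,*}\xi=0$ whenever $k<\dim Y_1$. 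Applying this with $\xi$ any of the terms of $(L+D)^k-L^k$ (all of which have support inside $\tilde p_1^{-1}(\Sigma)$ because each contains $D$ as a factor and $D$ is supported there) completes the proof: $f^*(\fa^k)=\tilde p_{1,*}(L^k)=\tilde p_{1,*}\big((L+D)^k\big)=(f^*\fa)^k$. The cohomological version is identical, using that $\mathrm H^{2(\dim Y_1-k)}$ of a finite set vanishes for $k<\dim Y_1$. The only genuinely delicate point is the bookkeeping that $D$ really is supported over $\Sigma$ — this is immediate from $\tilde p_1$ being an isomorphism away from $\tilde p_1^{-1}(\Sigma)$, since over that locus $\tilde p_1^*(f^*\fa)$ and $L=\tilde p_2^*\fa$ literally agree by definition of $f^*\fa$ on $Y_1-\Sigma$.
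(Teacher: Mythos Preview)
Your proof is correct and follows essentially the same approach as the paper. Both arguments identify the key divisor class $D=\tilde p_1^{\,*}(f^*\fa)-\tilde p_2^{\,*}\fa$ on $\bar\Gamma_f$ (the paper phrases this as $D_0\cdot\bar\Gamma_f - D\cdot\bar\Gamma_f$ for suitable divisors on $Y_1\times Y_2$), observe that it is supported over the finite set $\Sigma$, expand $(L+D)^k$ (the paper does this by induction rather than binomially), and kill the error terms by noting that their push-forward lands in $\mathrm{CH}_{\dim Y_1-k}(\Sigma)=0$ for $k<\dim Y_1$. The only cosmetic difference is that you work intrinsically on $\bar\Gamma_f$ while the paper works with intersections on $Y_1\times Y_2$; your use of the localization sequence to justify that $D$ is represented by a cycle supported on $\tilde p_1^{-1}(\Sigma)$ is the one point the paper leaves implicit.
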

\begin{proof}
Let $E=\Sigma\times Y_2\cap\bar{\Gamma}_f$ be the exceptional set of
the morphism $\bar{\Gamma}_f\rightarrow Y_1$. Let $D$ and $D_0$ be
two divisors on $Y_1\times Y_2$ such that $D\cdot\bar{\Gamma}_f =
D_0\cdot\bar{\Gamma}_f + \gamma_1$ where $\gamma_1$ is a cycle
supported on $E$. We conclude from the above that for all $1\leq
k\leq \dim Y_1$,
\begin{equation}\label{eq induction identity k}
D^k\cdot\bar{\Gamma}_f=D_0^k\cdot\bar{\Gamma}_f+\gamma_k
\end{equation}
where $\gamma_k$ is a cycle supported on $E$. This can be proved by
induction.
In equation \eqref{eq induction identity k}, we
apply $p_{1,*}$ to both sides and note that $p_{1,*}\gamma_k=0$,
$1\leq k\leq\dim Y_1-1$, since $p_1$ contracts $E$ to points while
$\dim\gamma_k \geq 1$. It follows that
$$
p_{1,*}(D^k\cdot\bar{\Gamma}_f) = p_{1,*}(D_0^k\cdot
\bar{\Gamma}_f),\quad 1\leq k\leq \dim Y_1-1.
$$
For any divisor $D$ on $Y_1\times Y_2$, we take
$D_0=p_1^*p_{1,*}(D\cdot\bar{\Gamma}_f)$. Since
$p_1|_{\bar{\Gamma}_f}:\bar{\Gamma}_f\rightarrow Y_1$ is isomorphism
away from $E$, we see that the condition $D\cdot\bar{\Gamma}_f =
D_0\cdot\bar{\Gamma}_f +\gamma_1$ for some cycle $\gamma_1$
supported on $E$. Hence we get
\begin{align*}
p_{1,*}(D^k\cdot\bar{\Gamma}_f) &= p_{1,*}(D_0^k\cdot
\bar{\Gamma}_f) \\
& = p_{1,*}(p_1^*(D')^k\cdot\bar{\Gamma}_f),\qquad D'=p_{1,*}(D\cdot\bar{\Gamma}_f)\\
& = (D')^k\cdot p_{1,*}\bar{\Gamma},\qquad(\text{the projection formula})\\
& = (D')^k =(p_{1,*}(D\cdot\bar{\Gamma}_f))^k
\end{align*}
for all divisors $D$ on $Y_1\times Y_2$ and $1\leq k\leq \dim
Y_1-1$. In particular, if we take $D=p_2^*\fa$ for some
$\fa\in\Pic(Y_2)$, then we get
$$
f^*\fa^k=p_{1,*}(\bar{\Gamma}_f\cdot p_2^*\fa^k)
=p_{1,*}(\bar{\Gamma}_f\cdot(p_2^*\fa)^k)=(p_{1,*}(\bar{\Gamma}_f\cdot
p_2^*\fa))^k=(f^*\fa)^k,
$$
where $1\leq k\leq \dim Y_1-1$. The case $k=0$ is automatic.
\end{proof}

Let $\Gamma_\rho\subset(S_C-\Sigma_v)\times\PP(\mathscr{F})$ be the
graph of $\rho$ and $\bar{\Gamma}_\rho\subset
S_C\times\PP(\mathscr{F})$ its closure. Note that for any
3-dimensional vector space $V\subset \mathcal{T}_{X,x}$ containing
$\mathcal{T}_{v,x}$, the condition
$(([l],x),V/\mathcal{T}_{v,x})\in\Gamma_\rho$ implies that $l\subset
\PP(V)$. We define the closed subvariety $Z\subset
S_C\times\PP(\mathscr{F})$ as
$$
Z =\{(([l],x),V/\mathcal{T}_{v,x})\in S_C\times\PP(\mathscr{F})\mid
l\subset\PP(V)\}.
$$

\begin{lem}\label{lem graph of rho}
$\bar{\Gamma}_\rho = Z$ in
$\mathrm{CH}_{n-2}(S_C\times\PP(\mathscr{F}))$.
\end{lem}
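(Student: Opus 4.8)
The plan is to show the two cycles $\bar\Gamma_\rho$ and $Z$ agree as $(n-2)$-dimensional cycle classes by first identifying $Z$ set-theoretically, checking it has the right dimension, and then comparing it with $\bar\Gamma_\rho$ on the locus where both are already known to coincide (the complement of $\Sigma_v$), and finally accounting for the fibers over $\Sigma_v$. The key structural point is that $\rho$ is a morphism on $S_C-\Sigma_v$, so $\Gamma_\rho$ and $Z$ literally agree there; hence $\bar\Gamma_\rho$ is one of the irreducible components of $Z$ of dimension $n-2$, and what must be shown is that $Z$ has no other component of dimension $\geq n-2$ and no embedded issues that would make the classes differ. Concretely, I would stratify $Z$ by the image of the first projection $p_1:Z\to S_C$: over a point $([l],x)\notin\Sigma_v$ the fiber $\{V/\mathcal{T}_{v,x}:l\subset\PP(V)\}$ is a single point (since $\mathcal{T}_{l,x}\not\subset\mathcal{T}_{v,x}$ forces $V=\mathcal{T}_{v,x}+\mathcal{T}_{l,x}$), so $Z$ is birational to $S_C$ over this open set; the remaining contribution sits over the finite set $\Sigma_v$.

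**Next I would** analyze the fiber of $p_1|_Z$ over a point $([L_i],x_i)\in\Sigma_v$. Here $\mathcal{T}_{L_i,x_i}\subset\mathcal{T}_{v,x_i}$, so a $3$-dimensional $V\supset\mathcal{T}_{v,x_i}$ automatically contains $\mathcal{T}_{L_i,x_i}$, but the condition $L_i\subset\PP(V)$ is a genuine constraint on which $V$ work unless $L_i$ actually lies in $\PP(\mathcal{T}_{v,x_i})$ — which it does, by the very definition of $\Sigma_v$. Wait: since $L_i\subset\PP(\mathcal{T}_{v,x_i})$ already, every $V\supset\mathcal{T}_{v,x_i}$ satisfies $L_i\subset\PP(V)$, so the fiber of $p_1|_Z$ over $([L_i],x_i)$ is all of $\PP(\mathscr{F}_{x_i})=\PP^{n-3}$. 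That would make the preimage $p_1^{-1}(\Sigma_v)\cap Z$ a union of $\PP^{n-3}$'s, of dimension $n-3 < n-2$, hence \emph{not} a component competing with $\bar\Gamma_\rho$ — good. So set-theoretically $Z=\bar\Gamma_\rho\cup(\text{lower-dimensional stuff})$, and as a cycle $Z$ and $\bar\Gamma_\rho$ can differ only by cycles supported over $\Sigma_v$, which have dimension $\leq n-3$; but both $Z$ and $\bar\Gamma_\rho$ are pure of dimension $n-2$ (for $Z$ this needs a local-complete-intersection or dimension count on the incidence variety defining it inside $S_C\times\PP(\mathscr{F})$), so the lower-dimensional pieces contribute no components, and $Z=\bar\Gamma_\rho$ as cycles of dimension $n-2$.

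**The main obstacle** I anticipate is verifying that $Z$ is \emph{pure} of dimension $n-2$ with no component of dimension $n-2$ other than $\bar\Gamma_\rho$; a priori one might worry about a component lying entirely over $\Sigma_v$ or over some positive-dimensional locus in $S_C$ where the fiber jumps. The cleanest way to handle this is to write $Z$ as the zero locus of a section of an explicit vector bundle on $S_C\times_C\PP(\mathscr{F})$: the condition $l\subset\PP(V)$ says that the rank-$2$ subbundle of $\mathcal{T}_X|_C$ corresponding to $\mathcal{T}_{l,\cdot}$ maps to zero in the rank-$(n-4)$ quotient $\mathscr{F}/(\text{tautological sub of }\mathscr{F})$ pulled back to $S_C\times_C\PP(\mathscr{F})$, i.e. vanishing of a section of a bundle of rank $2(n-4)$. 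One then checks the expected codimension is $n-2$ (note $\dim(S_C\times_C\PP(\mathscr{F})) = (n-2)+(n-3) = 2n-5$, and $2n-5-(n-2)\ne 2(n-4)$ in general, so the honest count is a bit more delicate and one must use that $\mathcal{T}_{l,x}$ already meets $\mathcal{T}_{v,x}$ in a line for points with $l\not\subset\PP(\mathcal{T}_{v,x})$), so in practice I would run the dimension estimate fiberwise over $C$ using Lemma~\ref{lem zero dimensional}, which bounds the dimension of the jump locus precisely. Once purity is in hand, the equality of cycle classes is immediate from agreement on the dense open $S_C-\Sigma_v$.

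**Finally**, for the cohomological version of the statement one notes that the same argument works with homological equivalence, or simply invokes that rational equivalence implies homological equivalence; the Chow-theoretic identity is the stronger assertion and is what is recorded in the lemma.
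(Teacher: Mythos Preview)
Your approach is essentially the same as the paper's: both show $Z$ agrees with $\Gamma_\rho$ over $(S_C-\Sigma_v)\times\PP(\mathscr{F})$, compute that the fiber of $Z\to S_C$ over each point $([L_i],x_i)\in\Sigma_v$ is all of $\PP(\mathscr{F}_{x_i})\cong\PP^{n-3}$ (dimension $n-3<n-2$), and conclude equality in $\mathrm{CH}_{n-2}$. Your third paragraph worrying about purity and proposing a vector-bundle description is unnecessary: the fiberwise computation you already did determines $Z$ completely as a set, and since $Z$ generically coincides with the graph $\Gamma_\rho$ (reduced, multiplicity one), the only $(n-2)$-dimensional component of $Z$ is $\bar\Gamma_\rho$ and no further local analysis is needed.
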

\begin{proof}
From Lemma \ref{lem zero dimensional}, $Z$ agrees with $\Gamma_\rho$
away from $\Sigma_v$, namely
$$
\Gamma_\rho = Z|_{(S_C-\Sigma_v)\times\PP(\mathscr{F})}.
$$
It follows that $\bar{\Gamma}_\rho\subset Z$. Hence we have
$$
Z=\bar{\Gamma}_\rho+ \sum_{i=1}^{5e^2}\{([L_i],x_i)\}\times
\PP(\mathscr{F}_{x_i}),\qquad\text{in
}\mathrm{CH}_{n-2}(S_C\times\PP(\mathscr{F})).
$$
Since $\dim\PP(\mathscr{F}_{x_i})=n-3$, we see that
$\{([L_i],x_i)\}\times \PP(\mathscr{F}_{x_i}) = 0$ in
$\mathrm{CH}_{n-2}(S_C\times\PP(\mathscr{F}))$. Hence the lemma
follows.
\end{proof}

\begin{defn}\label{defn Gamma_v}
Let $\Gamma_v\subset S_C\times S_C$ be the closure of
$(S_C\backslash\Sigma_v)\times _{\PP(\mathscr{F})}
(S_C\backslash\Sigma_v)$.
\end{defn}

\begin{rmk}\label{rmk generic definition Gamma_v}
Let $([l],x)\in S_C$ be a general point. Then $\mathcal{T}_{l,x}$
and $\mathcal{T}_{v,x}$ span a linear 3-dimensional vector space
$V\subset \mathcal{T}_{X,x}$ and
$\rho([l],x)=[V/\mathcal{T}_{v,x}]$. Then by Lemma \ref{lem zero
dimensional}, there are 5 other lines $l_i$, $i=1,\ldots,5$, such
that $\mathcal{T}_{l_i,x}\subset V$. Then $\Gamma_v$ is the
correspondence generically defined by
$$
([l],x)\mapsto \sum_{i=0}^5([l_i],x),
$$
where $l_0=l$.
\end{rmk}

\begin{lem}\label{lem decomposition Gamma_v}
$\Gamma_v = \bar{\Gamma}_\rho^t\circ\bar{\Gamma}_\rho$ in
$\mathrm{CH}_{n-2}(S_C\times S_C)$.
\end{lem}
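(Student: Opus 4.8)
The plan is to identify both sides as cycles on $S_C \times S_C$ via the correspondence product and compare them first away from the bad locus $\Sigma_v$, then control the discrepancy supported over $\Sigma_v \times \Sigma_v$. Recall that by definition $\bar\Gamma_\rho^t \circ \bar\Gamma_\rho = p_{13,*}\bigl(p_{12}^*\bar\Gamma_\rho^t \cdot p_{23}^*\bar\Gamma_\rho\bigr)$ inside $\mathrm{CH}_{n-2}(S_C \times S_C)$, where the $p_{ij}$ are the projections from $S_C \times \PP(\mathscr{F}) \times S_C$; since $\bar\Gamma_\rho = Z$ in the Chow group by Lemma \ref{lem graph of rho}, it is harmless to compute the composition using the honest graph $\Gamma_\rho$ over the open set $U = S_C \setminus \Sigma_v$ and then take closures. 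First I would restrict everything to $U \times \PP(\mathscr{F}) \times U$: here $\Gamma_\rho$ is literally the graph of the \emph{morphism} $\rho: U \to \PP(\mathscr{F})$, so the fibre product $\Gamma_\rho^t \times_{\PP(\mathscr{F})} \Gamma_\rho$ is identified scheme-theoretically with $U \times_{\PP(\mathscr{F})} U$, and the correspondence composition of graphs of morphisms is the graph of the fibre product. Thus $\bigl(\bar\Gamma_\rho^t \circ \bar\Gamma_\rho\bigr)\big|_{U \times U}$ equals the class of $\overline{U \times_{\PP(\mathscr{F})} U}$ restricted to $U \times U$, which by Definition \ref{defn Gamma_v} is exactly $\Gamma_v\big|_{U \times U}$.

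The next step is dimension bookkeeping on the closures. Both $\Gamma_v$ and $\bar\Gamma_\rho^t \circ \bar\Gamma_\rho$ are classes in $\mathrm{CH}_{n-2}(S_C \times S_C)$ of pure dimension $n-2$: for $\Gamma_v$ this is Remark \ref{rmk generic definition Gamma_v} (it is generically finite of degree $6$ over $S_C$ via either projection), and for the composition it follows because $\rho$ is generically finite onto its image and $\bar\Gamma_\rho$ has dimension $n-2$. Since the two classes agree on the open set $U \times U$, their difference is supported on $(S_C \times S_C) \setminus (U \times U) = (\Sigma_v \times S_C) \cup (S_C \times \Sigma_v)$. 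Any $(n-2)$-dimensional cycle supported there must be a combination of components of the form $\{([L_i],x_i)\} \times S_C$ or $S_C \times \{([L_j],x_j)\}$, i.e.\ ``horizontal'' or ``vertical'' fibres through the indeterminacy points. So the content of the lemma is that these fibre components do not actually occur in either cycle — equivalently, that neither $\Gamma_v$ nor the composition has an excess component over a point of $\Sigma_v$.

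For $\Gamma_v$ this is built into Definition \ref{defn Gamma_v}: one takes the closure of the fibre product \emph{of the open parts}, so by construction the vertical/horizontal fibres $\{pt\} \times S_C$ are excluded. The work is therefore on the composition side: I must show $\bar\Gamma_\rho^t \circ \bar\Gamma_\rho$ contains no component of the form $\{([L_i],x_i)\} \times S_C$ (and symmetrically). This is where Lemma \ref{lem zero dimensional} does the job. The fibre of $p_{12}^*\bar\Gamma_\rho^t \cdot p_{23}^*\bar\Gamma_\rho$ over a point $([L_i],x_i)$ in the first factor is governed by the fibre of $Z \to S_C$ over $([L_i],x_i)$, which is $\{([L_i],x_i)\} \times \PP(\mathscr{F}_{x_i})$ of dimension $n-3$; composed with $\bar\Gamma_\rho$ (another copy of $Z$) over $\PP(\mathscr{F})$, the resulting locus over $([L_i],x_i)$ in $S_C \times S_C$ has dimension at most $(n-3) + 0 = n-3 < n-2$ — here one uses that over a general point of $\PP(\mathscr{F}_{x_i})$ the second copy of $Z$ meets it in the expected finite set by Lemma \ref{lem zero dimensional}, the one-dimensional excess component there being a \emph{line} in $\PP(V) = \PP^2$ which still sits over a single point $x_i$ of the base $C$ and hence contributes dimension $\le n-3$ to $S_C \times S_C$. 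Hence no $(n-2)$-dimensional fibre component arises, the difference class vanishes, and $\Gamma_v = \bar\Gamma_\rho^t \circ \bar\Gamma_\rho$. The main obstacle I anticipate is precisely this last dimension estimate over the points of $\Sigma_v$: one has to be careful that the single possible one-dimensional component of $F_{x_i} \cap \PP(V)$ allowed by Lemma \ref{lem zero dimensional} does not, after composing the two correspondences, balloon into an $(n-2)$-dimensional piece — tracking that it stays over one point of $C$ is the crux.
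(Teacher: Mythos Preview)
Your approach is correct and is essentially what the paper does, only the paper is far terser: it replaces $\bar\Gamma_\rho$ by $Z$ via Lemma~\ref{lem graph of rho}, writes down the set
\[
p_{13}\bigl((Z\times S_C)\cap(S_C\times Z^t)\bigr)=\{(([l],x),([l'],x)):[l],[l']\in F_x\cap\PP(V)\text{ for some }V\supset\mathcal{T}_{v,x}\},
\]
and then simply says ``one directly checks that this cycle is $\Gamma_v$.'' Your open-set/boundary argument is precisely the content of that unchecked check.

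The dimension worry in your final paragraph can be dispatched more cheaply than you suggest, without tracking the possible one-dimensional line components of $F_{x_i}\cap\PP(V)$. Over a point $s_1=([L_i],x_i)$ in the first factor, any point of the intersection $(Z\times S_C)\cap(S_C\times Z^t)$ has middle coordinate $[V]\in\PP(\mathscr F_{x_i})$ and third coordinate $([l'],x')$ with $x'=x_i$ (forced by the fibration $\PP(\mathscr F)\to C$) and $[l']\in F_{x_i}$. Hence the image under $p_{13}$ of the whole fibre lands in $\{s_1\}\times F_{x_i}$, which has dimension $n-3$; so whatever excess occurs in the triple product, its push-forward contributes nothing to $\mathrm{CH}_{n-2}(S_C\times S_C)$. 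This sidesteps the estimate you flagged as the crux.
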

\begin{proof}
By Lemma \ref{lem graph of rho}, we may replace $\bar{\Gamma}_\rho$
by $Z$. By the definition of composition of correspondences (see
\cite[\S16.1]{fulton}), we know that
\begin{align*}
Z^t\circ Z &= p_{13,*}(p_{12}^*Z\cdot p_{23}^*Z^t)\\
           &= p_{13,*}(Z\times S_C \cap S_C\times Z^t)
\end{align*}
It follows that $Z^t\circ Z$ is represented by the cycle
$$
\{(([l],x),([l'],x))\in S_C\times S_C: [l], [l']\in F_x\cap\PP(V)
\text{ for some }[V/\mathcal{T}_{v,x}]\in\PP(\mathscr{F}_x)\}
$$
One directly checks that this cycle is $\Gamma_v$.
\end{proof}

\begin{lem}\label{Gamma_v lemma}
Let $\,\Gamma_v$ act either on the cohomology groups or on the Chow groups of $S_C$.\\
(i) If $\fa$ is an odd dimensional topological cycle, then
$(\Gamma_v)_* \fa=0$;\\
(ii) If $\fa$ is zero dimensional, then $(\Gamma_v)_* \fa$ is a
constant that only depends on the degree of $\fa$;\\
(iii) Let $\fa=[S_C]$, then $(\Gamma_v)_* \fa=6\fa$;\\
(iv) If $\fa$ is a topological cycle of real codimension $2m$ or an
algebraic cycle of codimension $m$, then $(\Gamma_v)_*\fa$ is a
linear combination of $g^m$ and $g'g^{m-1}$ which only depends on
the intersection numbers $\fa\cdot g^{n-m-2}$ and $\fa\cdot
g'g^{n-m-3}$.
\end{lem}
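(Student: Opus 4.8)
The plan is to use the factorization $\Gamma_v=\bar{\Gamma}_\rho^t\circ\bar{\Gamma}_\rho$ from Lemma \ref{lem decomposition Gamma_v}: it says that, on cohomology as well as on Chow groups, the operator $(\Gamma_v)_*$ equals $\rho^*\circ\rho_*$, where $\rho^*$ and $\rho_*$ are the operations attached to the rational map $\rho\colon S_C\dashrightarrow\PP(\mathscr{F})$ exactly as in Lemma \ref{lem pull back by rational map} (which applies because the indeterminacy locus $\Sigma_v$ of $\rho$ is finite, by Lemma \ref{lem Sigma_v}). Everything then reduces to two properties of the target: $\PP(\mathscr{F})$ is a $\PP^{n-3}$-bundle over $C$ with $C\cong\PP^1$; and $\rho$ is generically finite of degree $6$ (Remark \ref{rmk generic definition Gamma_v}) and is compatible with the projections to $C$, the one on $S_C$ being $q_0$. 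From this compatibility, $\rho^*$ of the class of a fibre of $\PP(\mathscr{F})\to C$ equals $q_0^*[pt]=g'$.

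I would then treat parts (i)--(iii) quickly. For (i): a projective bundle over $\PP^1$ has vanishing odd cohomology (Leray--Hirsch), and the correspondence $\bar{\Gamma}_\rho$, having the expected dimension $n-2$, preserves cohomological degree; hence $\rho_*\fa=0$ for $\fa$ of odd real dimension, so $(\Gamma_v)_*\fa=\rho^*\rho_*\fa=0$. For (iii): since $\deg\rho=6$ we have $\rho_*[S_C]=6\,[\PP(\mathscr{F})]$, while $\rho^*[\PP(\mathscr{F})]=[S_C]$ because $\bar{\Gamma}_\rho\to S_C$ is birational, so $(\Gamma_v)_*[S_C]=6\,[S_C]$. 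For (ii): the projective bundle formula over $C\cong\PP^1$ gives $\mathrm{CH}_0(\PP(\mathscr{F}))=\Z$ and $H^{2(n-2)}(\PP(\mathscr{F}))=\Z$, so $\rho_*\fa=\deg(\fa)\,[pt]$ for the generating point class, and therefore $(\Gamma_v)_*\fa=\deg(\fa)\,\rho^*[pt]$ depends only on $\deg(\fa)$.

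For (iv), set $\zeta=c_1(\calO_{\PP(\mathscr{F})}(1))$ and let $\eta$ be the pullback of the point class of $C$, so $\eta^2=0$. By the projective bundle formula the codimension-$m$ part of $\mathrm{CH}^*(\PP(\mathscr{F}))$ (resp.\ $H^{2m}$) is spanned by $\zeta^m$ and $\zeta^{m-1}\eta$, hence $\rho_*\fa=\lambda\,\zeta^m+\mu\,\zeta^{m-1}\eta$ for some $\lambda,\mu$; intersecting with $\zeta^{\,n-2-m}$ and with $\zeta^{\,n-3-m}\eta$ and applying the projection formula computes $\lambda,\mu$ as linear combinations of $\fa\cdot\rho^*(\zeta^{\,n-2-m})$ and $\fa\cdot\rho^*(\zeta^{\,n-3-m}\eta)$. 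Applying $\rho^*$ to $\rho_*\fa$ and moving $\rho^*$ past powers by Lemma \ref{lem pull back by rational map} (handling the mixed monomial via $\zeta^{m-1}\eta=\frac1m((\zeta+\eta)^m-\zeta^m)$, so that $\rho^*(\zeta^{m-1}\eta)=\frac1m((\rho^*\zeta+g')^m-(\rho^*\zeta)^m)$), and then using $\rho^*\eta=g'$ and $g'^2=0$, one concludes that $(\Gamma_v)_*\fa$ is a $\Q$-combination of $g^m$ and $g^{m-1}g'$ whose coefficients depend only on $\fa\cdot g^{n-m-2}$ and $\fa\cdot g'g^{n-m-3}$ --- provided one knows $\rho^*\zeta\in\Q g\oplus\Q g'$.

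The hard part is exactly that last input: identifying the divisor class $\rho^*\zeta$ on $S_C$ in terms of $g$ and $g'$. Because $\rho$ is the restriction to $S_C$ of the linear projection $\beta\colon\PP(\mathcal{T}_X|_C)\dashrightarrow\PP(\mathscr{F})$ away from $\PP(\mathcal{T}_v)$ (Remark \ref{rmk Tv and Sigma_v}), the class $\rho^*\zeta$ agrees with the restriction to $S_C$ of $c_1(\calO_{\PP(\mathcal{T}_X|_C)}(1))$ up to a divisor supported on $\Sigma_v$ --- a correction that only matters when $n=3$ --- so the task is to compare that tautological class restricted to $S_C\subset\PP(\mathcal{T}_X|_C)$ with the Pl\"ucker class $g$ and the fibre class $g'=q_0^*[pt]$. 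I expect this to come from a Chern-class bookkeeping on $S_C\subset P(X)$, using that $P(X)$ is a $\PP^1$-bundle over $F$ with relative $\calO(1)$ pulled back from $\calO_{\PP^{n+1}}(1)$ and $g=c_1$ of the rank-$2$ bundle on $F$, but this identification is the place that needs care.
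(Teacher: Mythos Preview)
Your approach is essentially identical to the paper's: factor $(\Gamma_v)_*=\rho^*\rho_*$ via Lemma~\ref{lem decomposition Gamma_v}, use the projective-bundle description of $\mathrm{H}^*(\PP(\mathscr{F}))$ and $\mathrm{CH}^*(\PP(\mathscr{F}))$ over $C\cong\PP^1$, and invoke Lemma~\ref{lem pull back by rational map}. Parts (i)--(iii) and the structure of (iv) match the paper exactly.

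The one step you explicitly leave open --- showing $\rho^*\zeta\in\Q g\oplus\Q g'$ --- is handled in the paper by a short fibrewise argument on $\PP(\mathcal{T}_X|_C)$, not by the Chern-class bookkeeping on $P(X)$ you propose. Namely, on $\PP(\mathcal{T}_X|_C)$ consider the divisor class $\tilde g$ of lines meeting a fixed general $\PP^{n-1}\subset\PP^{n+1}$; by construction $j_1^*\tilde g=g$ on $S_C$, and $\tilde g$ restricts to the hyperplane class on every fibre $\PP(\mathcal{T}_{X,x})$. Since $\beta$ is a linear projection, $\beta^*\zeta$ also restricts to the hyperplane class on each fibre, so the Picard group of a projective bundle forces $\beta^*\zeta-\tilde g$ to be pulled back from $C$. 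Restricting to $S_C$ gives $\rho^*\zeta=g+rg'$ for some integer $r$ (your worry about a correction supported on $\Sigma_v$ is irrelevant: in codimension one this is zero once $n\geq4$, and for $n=3$ part (iv) is vacuous). The precise value of $r$ never matters because $(g')^2=0$. This closes your gap without any computation on $P(X)$.

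Your binomial trick $\zeta^{m-1}\eta=\tfrac{1}{m}\bigl((\zeta+\eta)^m-\zeta^m\bigr)$ is a clean way to reduce the mixed monomial to Lemma~\ref{lem pull back by rational map}; the paper instead writes $\rho^*(\pi^*[pt]\cdot\xi^{k})=g'(g+rg')^{k}$ directly, which is justified the same way or by observing that over a general point of $C$ (avoiding the $x_i$) the map $\rho$ is an honest morphism on the fibre $F_x$.
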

\begin{proof}
By Lemma \ref{lem decomposition Gamma_v}, we know that
\[
(\Gamma_v)_*=(\bar{\Gamma}_\rho^t\circ\bar{\Gamma}_\rho)_* = (\bar{\Gamma}_\rho^t)_* (\bar{\Gamma}_\rho)_* = \rho^*\rho_*.
\]
Let $\xi$ be the relative
$\calO_{\PP(\mathscr{F})}(1)$ class on $\PP(\mathscr{F})$. Then the
cohomology of $\PP(\mathscr{F})$ is naturally given by
$$
\mathrm{H}^*(\PP(\mathscr{F}))=\mathrm{H}^*(C)[\xi],\quad
\xi^{n-2}+\pi^*c_1(\mathscr{F})\cdot \xi^{n-3}=0.
$$
Where $\pi:\PP(\mathscr{F})\rightarrow C$ is the natural projection.
Similarly, we have the description of the Chow ring as
$$
\mathrm{CH}^*(\PP(\mathscr{F}))=\mathrm{CH}^*(C)[\xi], \quad
\xi^{n-2}+\pi^*c_1(\mathscr{F})\cdot \xi^{n-3}=0.
$$
Consider the following diagram
$$
\xymatrix{
 S_C\ar[r]^{j_1\quad}\ar@{-->}[rd]^\rho &\PP(\mathcal{T}_X|_C)\ar[r]^{\quad\alpha}\ar@{-->}[d]^\beta &C\\
  &\PP(\mathscr{F})\ar[ru]_\pi &
 }
$$
The rational map $\beta$ is defined by the natural homomorphism
$\mathcal{T}_X|_C\rightarrow\mathscr{F}$. The locus where $\beta$ is
not defined is exactly $\PP(\mathcal{T}_v)$. The pull back
$\beta^*\xi$ restricts to a hyperplane class on each fiber of
$\alpha$. The variety $\PP(\mathcal{T}_{X}|_C)$ parameterizes all
lines in $\PP^{n+1}$ which are tangent to $X$ at some point $x\in C$
(remembering the point $x$). Let
$\tilde{g}\subset\PP(\mathcal{T}_{X}|_C)$ be the divisor
corresponding to all the lines meeting a given linear
$\PP^{n-1}\subset\PP^{n+1}$ in general position. Then we see that
$i^*\tilde{g}=g|_{S_C}$ and this is denoted again by $g$ for
simplicity. One also sees that $\tilde{g}$ restricts to a hyperplane
class of $\PP(\mathcal{T}_{X,x})=\alpha^{-1}(x)$. Thus $\tilde{g}$
and $\beta^*\xi$ differ by a class coming from $C$. After pulling
back to $S_C$, we get
$$
\rho^*\xi=g+ rg'
$$
for some integer $r$. By Lemma \ref{lem pull back by rational map},
we get the following key equality
\begin{equation}
\rho^*\xi^k=(\rho^*\xi)^k=(g+rg')^k
\end{equation}
Let $\fa\in \mathrm{H}^{2m+1}(S_C)$ be a topological class of odd
dimension, then $\rho_*\fa$ is an element in
$\mathrm{H}^{2m+1}(\PP(\mathscr{F}))$ which is zero. Hence
$(\Gamma_v)_*(\fa)=\rho^*\rho_*\fa=0$. Now let $\fa$ be an element
of $\mathrm{H}^{2m}(S_C)$ (or $\mathrm{CH}^{m}(S_C)$), $0<m<n-2$
(the cases of $m=0,n-2$ are quite easy to deal with). Then
$\rho_*\fa$ is an element in $\mathrm{H}^{2m}(\PP(\mathscr{F}))$ (or
$\mathrm{CH}^m(\PP(\mathscr{F}))$). Then we have the following
expression
$$
\rho_*\fa=a\xi^{m}+b\pi^*[pt]\cdot\xi^{m-1},
$$
for some $a,b\in\Z$. Apply $\rho^*$ to the above identity and use
Lemma \ref{lem pull back by rational map}, we get
$$
(\Gamma_v)_*(\fa)=\rho^*\rho_*\fa=a(g+rg')^{m}+bg'(g+rg')^{m-1}
=ag^m +(b+ma)g^{m-1}g'
$$
Also, the numbers $a$ and $b$ can be determined in the following way
\begin{align*}
a &= (a\xi^m+b\pi^*[pt]\xi^{m-1})\cdot\pi^*[pt]\xi^{n-m-3}\\
 &=\rho_*\fa\cdot\pi^*[pt]\xi^{n-m-3}\\
 &=\fa\cdot \rho^*(\pi^*[pt]\xi^{n-m-3})\\
 &=\fa\cdot g'(g+rg')^{n-m-3},\qquad (\text{by Lemma \ref{lem pull back by rational
 map}})\\
 &=\fa\cdot g'g^{n-m-3}
\end{align*}
To get $b$, we consider
\begin{align*}
\rho_*\fa\cdot\xi^{n-m-2}
 &=a\xi^{n-2}+b\\
 &=a(-\pi^*c_1(\mathscr{F}))\xi^{n-3}+b\\
 &=-a\deg{\mathscr{F}}+b
\end{align*}
Hence we have
\begin{align*}
b &=a\deg{\mathscr{F}} +\fa\cdot \rho^*\xi^{n-m-2}\\
&=a\deg(\mathscr{F})+\fa\cdot(g+rg')^{n-m-2}
\end{align*}
Thus $b$ only depends on the intersection numbers $\fa\cdot
g^{n-m-2}$ and $\fa\cdot g'g^{n-m-3}$. This finishes the proof.
\end{proof}

\subsection{Proof of Theorem \ref{sigma identity}}

Let $T\subset\mathrm{Hilb}^e(X)$ be a general smooth curve passing
through the point $[C]$. Let $\{\mathscr{C}_t:t\in T\}$ be the
corresponding 1-dimensional family of rational curves on $X$ such
that the fiber $\mathscr{C}_0=C$ at the special point $0=[C]\in T$.
Let $S_t=S_{\mathscr{C}_t}$ and
$i_t=i_{\mathscr{C}_t}:S_t\rightarrow F$. Let $I'_t\subset S_t\times
S_C$, $t\neq 0$, be the natural incidence correspondence, namely
$$
I'_t=\{(([l'],x'),([l],x))\in S_t\times S_C: l\cap
l'\neq\emptyset\}=(i_t\times i_C)^* I\in\mathrm{CH}_{n-2}(S_t\times
S_C).
$$
Let $I_t\subset S_t\times S_C$ be the correspondence generically
defined by
\begin{equation}\label{eq It}
I_t:([l'],x')\mapsto \sum_{i=1}^{5e}([l'_i],x'_i),
\end{equation}
where $([l]',x')\in S_t$ is a general point, $l'_i$ the incidence
lines of $l'$ and $C$, $x'_i=l'_i\cap C$.

\begin{lem}
We have $I_t=I'_t$ in $\mathrm{CH}_{n-2}(S_t\times S_C)$.
\end{lem}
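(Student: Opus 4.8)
The plan is to pin down the pullback $I'_t=(i_t\times i_C)^*[I]$ by first identifying its support set-theoretically and then fixing the multiplicities of its components by a degree computation carried out on $F\times F$. Since $t$ is general, $\mathscr{C}_t$ is a general deformation of $C=\mathscr{C}_0$; as both are curves on $X$ and $n\geq 3$, we may assume $\mathscr{C}_t\cap C=\emptyset$ for $t\neq 0$. Recall that $S_t$ and $S_C$ are smooth of dimension $n-2$ (Lemma \ref{lem smoothness of Sc}) and that $i_t$, $i_C$ are birational onto their images $F_{\mathscr{C}_t}$, $F_C\subset F$; since $F$ is smooth as well, $I'_t$ is a well-defined class in $\mathrm{CH}_{n-2}(S_t\times S_C)$ supported on $(i_t\times i_C)^{-1}(I)$. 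A point $(([l'],x'),([l],x))$ lies in $(i_t\times i_C)^{-1}(I)$ exactly when $l\cap l'\neq\emptyset$ or $l=l'$. Over the diagonal $\Delta_F\subset I$ this forces $l=l'$ to meet both $\mathscr{C}_t$ and $C$, hence, using $\mathscr{C}_t\cap C=\emptyset$, there are only finitely many such points. Away from that, an infinite family of incident lines $l$ attached to a fixed $l'$ occurs only when $l'$ meets $C$; the lines $l'$ meeting both $\mathscr{C}_t$ and $C$ again form a finite set, above each of which the relevant $l$'s sweep the $(n-3)$-dimensional scheme $F_{l'\cap C}$. A routine check (examining also the secant loci of $\mathscr{C}_t$ and $C$ and the Eckardt points) then shows that $I_t$, the closure of the graph of $([l'],x')\mapsto\sum_{i=1}^{5e}([l'_i],x'_i)$, is precisely the union of the $(n-2)$-dimensional components of $(i_t\times i_C)^{-1}(I)$. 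Writing $I_t=\sum_j I_t^{(j)}$ for its irreducible components, we therefore obtain $I'_t=\sum_j m_j\,[I_t^{(j)}]$ with integers $m_j\geq 1$ (intersection multiplicities of properly intersecting components being positive).

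To see each $m_j=1$ I would push forward to $F$. Let $p_1,p_2$ denote the projections of $S_t\times S_C$ and $\pi_1,\pi_2$ those of $F\times F$. Because $\pi_1\circ(i_t\times i_C)=i_t\circ p_1$, because $i_t\times i_C$ is birational onto $F_{\mathscr{C}_t}\times F_C$, and by the projection formula,
\[
i_{t,*}p_{1,*}I'_t=\pi_{1,*}\bigl((i_t\times i_C)_*(i_t\times i_C)^*[I]\bigr)=\pi_{1,*}\bigl([I]\cdot[F_{\mathscr{C}_t}\times F_C]\bigr)=[F_{\mathscr{C}_t}]\cdot\pi_{1,*}\bigl([I]\cdot\pi_2^*[F_C]\bigr).
\]
For a general $[l']\in F$ the lines meeting both $l'$ and $C$ number $D_C\cdot l'=5\deg(C)=5e$, by Lemma \ref{lem dimension}(iii) (equivalently \cite[Lemma 3.10]{shen}), and this count is transverse; hence $\pi_{1,*}\bigl([I]\cdot\pi_2^*[F_C]\bigr)=5e\,[F]$ and $i_{t,*}p_{1,*}I'_t=5e\,[F_{\mathscr{C}_t}]$. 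On the other hand $p_{1,*}I'_t=\bigl(\sum_j m_j\deg(I_t^{(j)}/S_t)\bigr)[S_t]$ and $i_{t,*}[S_t]=[F_{\mathscr{C}_t}]\neq 0$ (it has positive Pl\"ucker degree), so $\sum_j m_j\deg(I_t^{(j)}/S_t)=5e$. Since the multivalued map defining $I_t$ has $5e$ branches we also have $\sum_j\deg(I_t^{(j)}/S_t)=5e$, and together with $m_j\geq 1$ this forces $m_j=1$ for every $j$. Hence $I'_t=[I_t]$.

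The step I expect to be the main obstacle is the set-theoretic bookkeeping in the first paragraph: one must rule out any stray $(n-2)$-dimensional component of $(i_t\times i_C)^{-1}(I)$ beyond $I_t$, in particular over the singular loci of $F_{\mathscr{C}_t}$ and $F_C$ and over Eckardt points. The feature that makes this work is the disjointness $\mathscr{C}_t\cap C=\emptyset$ for $t\neq 0$, which is exactly the condition that fails at $t=0$, where $I_C$ and $I'_C$ genuinely differ by the excess component $S_C\times_C S_C$. Alternatively, once the support is known, one could instead establish $m_j=1$ by a local transversality computation at a general point of $I_t$, using the identification $\mathcal{T}_{[l']}F_{\mathscr{C}_t}=\mathrm{H}^0(\mathscr{N}_{l'/X}\langle\mathcal{T}_{\mathscr{C}_t,x'}\rangle)$ together with the deformation-theoretic description of the incidence condition at the point $l\cap l'$; but the global degree count above is cleaner and avoids the normal-bundle case analysis.
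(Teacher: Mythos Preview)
Your argument is correct and follows the same overall strategy as the paper: identify the set-theoretic preimage $\tilde I_t=(i_t\times i_C)^{-1}(I)$, observe that its only $(n-2)$-dimensional component is $I_t$ (the extra pieces $\{([L_{j,t}],x_{j,t})\}\times F_{y_{j,t}}$ having dimension $n-3$), and conclude that $I'_t=[I_t]$. The paper's proof stops here, implicitly taking the multiplicity along $I_t$ to be $1$; you go further and pin down this multiplicity by a global degree count, pushing $I'_t$ forward via $i_{t,*}p_{1,*}$ and comparing with the known number $5e$ of incidence lines from Lemma~\ref{lem dimension}(iii). This extra step is a genuine improvement in rigor: the paper's ``by definition'' is really appealing to generic transversality of $I$ with $F_{\mathscr{C}_t}\times F_C$ in $F\times F$, which is plausible but not written down, whereas your argument bypasses any local computation. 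Your alternative suggestion of checking transversality directly via the tangent-space descriptions would also work and is closer in spirit to what the paper silently assumes.
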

\begin{proof}
Let $\tilde{I}_t=(i_t\times i_C)^{-1}I$. By definition, $I_t$ is
equal to the cycle class of $\tilde{I}_t$. Consider the projection
$p_1:\tilde{I}\rightarrow S_t$. Let $([l'],x')\in S_t$ be any point.
If $l'$ does not meet $C$, then $ p_1^{-1}([l'],x') =
\sum_{i=1}^{5e}([l_i],x_i)$ where $l_i$ run through all the
incidence lines of $l'$ and $C$; if $l'\cap C=y$, then $
p_1^{-1}([l'],x')$ is the union of $F_y=q_0^{-1}y $ and the set of
all secant lines of the pair $(l',C)$. It follows that
$$
\tilde{I}_t= I'_t\cup(\bigcup_{j=1}^{5e^2}
\{[L_{j,t}],x_{j,t})\}\times F_{y_{j,t}}),
$$
where $\{L_{j,t}\}_{j=1}^{5e^2}$ is the set incidence lines of
$\mathscr{C}_t$ and $C$, $x_{j,t}=L_{j,t}\cap\mathscr{C}_t$ and
$y_{j,t}=L_{j,t}\cap C$. Since $\dim F_{y_{j,t}}=n-3$, we see that
the cycle class of $\tilde{I}_t$ is equal to that of $I'_t$.
\end{proof}
Back to the proof of Theorem \ref{sigma identity}. In equation
\eqref{eq It}, when $t$ specializes to 0, $l'$ specializes to a
line $l$ meeting $C$; there are 5 lines, say
$l'_1,\ldots,l'_5$, among all the $l'_i$'s, that specialize to five
lines, $E_i$, $i=1,2,\ldots,5$, passing through $x=l\cap C$; all the
other lines of the $l'_i$'s specialize to secant lines, $l_i$, of the
pair $(l,C)$. We have the following description of the $E_i$'s.

Since $T$ is a smooth curve in the Hilbert scheme of degree $e$
rational curves on $X$, the tangent space $\mathcal{T}_{T,0}$ gives
a one dimensional subspace of
$\mathcal{T}_{\mathrm{Hilb}^e(X),[C]}=\mathrm{H}^0(C,\mathscr{N}_{C/X})$.
Let $v\in \mathrm{H}^0(C,\mathscr{N}_{C/X})$ be a generator of
$\mathcal{T}_{T,0}$. By choosing $T$ general enough, we may assume
that $v$, as a section of $\mathrm{H}^0(C,\mathscr{N}_{C/X})$, is
general. To this $v$, we can associate $\mathcal{T}_v\subset
\mathcal{T}_{X}|_C$ with quotient $\mathscr{F}$, $\Sigma_v\subset
S_C$ and $\rho:S_C\backslash\Sigma_v\rightarrow \PP(\mathscr{F})$ as before.

We use the notation and results of \cite[\S11.1]{fulton}. Let
$I_0=\lim_{t\to 0}I_t$. In the equation \eqref{eq It}, we take the
limit as $t\to 0$, then we get
\begin{equation}\label{eq limit of It}
I_0:([l],x)\mapsto \sum_{i=6}^{5e} ([l'_i],x_i)
+\sum_{i=1}^{5}([E_i],x),
\end{equation}
where $\{l'_i\}_{i=6}^{5e}$ is the set of all secant lines of the
pair $(l,C)$ and $x_i=l'_i\cap C$. Note that the rule in \eqref{eq
limit of It} generically defines $I_C+\Gamma_v-\Delta_{S_C}$. We see
that the difference of $I_0$ and $I_C+\Gamma_v-\Delta_{S_C}$ is
supported on $\Sigma_v\times_C\PP(\mathscr{F})$. Since $\dim
\Sigma_v\times_C\PP(\mathscr{F})=n-3$, we have
\begin{equation}\label{identity on correspondence}
I_0=I_C+\Gamma_v-\Delta_{S_C},\quad \text{ in
}\mathrm{CH}_{n-2}(S_C\times S_C).
\end{equation}
Note that $I_t=I|_{S_t\times S_C}$ for $t\neq 0$. By taking limits
and applying \cite[Proposition 11.1]{fulton}, we know that the class
of $I_0$ is equal to $I$ restricted to $S_C\times S_C$.
Equivalently, we have $ I_0=I'_C$ in $\mathrm{CH}_{n-2}(S_C\times
S_C)$. Thus we get the following key identity
\begin{equation}\label{eq key identity Ic and I'c}
I'_C=I_C +\Gamma_v -\Delta_{S_C}.
\end{equation}
Since $\Psi_C=\Psi\circ(i_C)_*$, $\Phi_C=(i_C)^*\circ\Phi$ and
$\Phi\circ\Psi=I_*$, we see that
$$
\Phi_C\circ\Psi_C=(i_C)^*\circ I_*\circ (i_C)_*=((i_C\times
i_C)^*I)_*=(I'_C)_*
$$
Combine this with the equation \eqref{eq key identity Ic and I'c},
we obtain
$$
\Phi_C\circ\Psi_C = \sigma -1 +(\Gamma_v)_*
$$
as actions on either the cohomology groups or the Chow groups of
$S_C$. Then Theorem \ref{sigma identity} follows easily from Lemma
\ref{Gamma_v lemma}.

\subsection{The subalgebra $\Q[g,g']$}

On $S_C$ we have the polarization $g$ and the class $g'=q_0^*[pt]$,
where $q_0=q|_{S_C}:S_C\rightarrow C$ is the natural morphism.

\begin{lem}\label{multiple of h lemma}
$\Psi_C(g'\cdot\Phi_C(h^{m}))=2h^{m+1}$.
\end{lem}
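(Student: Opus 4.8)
The plan is to evaluate $\Psi_C(g'\cdot\Phi_C(h^m))$ by representing every class geometrically and tracking the resulting subvarieties of $X$. Fix a general point $pt\in C$, so that $g'=q_0^*[pt]$ is the class of the reduced fibre $q_0^{-1}(pt)\subset S_C$, which is exactly the Fano scheme $F_{pt}$ of lines on $X$ through $pt$. Representing $h^m$ by $X\cap\Lambda$ for a general linear subspace $\Lambda\subset\PP^{n+1}$ of codimension $m$, one has $q_C^*h^m=(q_C^*h)^m$, represented by $q_C^{-1}(X\cap\Lambda)$; pushing forward along $p_C$, which restricted to $q_C^{-1}(X\cap\Lambda)$ is generically one-to-one onto $Z_\Lambda:=\{([l],x)\in S_C:l\cap\Lambda\ne\emptyset\}$ (for $m\ge1$ a line meeting $\Lambda$ meets it in a single point), gives $\Phi_C(h^m)=[Z_\Lambda]$. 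Hence $g'\cdot\Phi_C(h^m)$ is represented by $q_0^{-1}(pt)\cap Z_\Lambda$, i.e.\ by the family of lines on $X$ through $pt$ that meet $\Lambda$; and a line through $pt$ meets $\Lambda$ precisely when it is contained in the general codimension $(m-1)$ subspace $\Lambda':=\langle pt,\Lambda\rangle$, so this is the family of lines on $X$ through $pt$ contained in $\Lambda'$. Applying $\Psi_C$ sweeps these lines out in $X$; the sweep is $D_{pt}\cap\Lambda'$, and the family maps to it generically one-to-one (a general point $z\in D_{pt}\cap\Lambda'$ lies on the single line $\overline{pt\,z}$). Therefore
\[
\Psi_C(g'\cdot\Phi_C(h^m))=[D_{pt}\cap\Lambda']=h^{m-1}\cdot[D_{pt}]\quad\text{in }\mathrm{CH}(X),
\]
the last equality because $\Lambda'$ is a general linear section through $pt$ of the right codimension, so $D_{pt}\cap\Lambda'$ is proper; here I use Lemma \ref{lem dimension} to know $\dim D_{pt}=n-2$.

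It then suffices to show $[D_{pt}]=2h^2$ in $\mathrm{CH}^2(X)$ for general $pt$. Choosing affine coordinates centred at $pt$ and writing a defining cubic $f$ of $X$ restricted to the line $pt+tv$ as $t\,f_1(v)+t^2f_2(v)+t^3f_3(v)$ (the constant term vanishes since $pt\in X$), the line $pt+tv$ lies on $X$ precisely when $f_1(v)=f_2(v)=f_3(v)=0$. Now $f_1(v)=0$ means $pt+v$ lies on the embedded tangent hyperplane $T_{pt}X$; $f_2(v)=0$ means $pt+v$ lies on the quadric $Q_{pt}$ cut out by $f_2$; and on $T_{pt}X$ one has $f=f_2+f_3$, so that inside $T_{pt}X$ the scheme $X\cap Q_{pt}$ is defined by $f_2=f_3=0$. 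This yields the set-theoretic identity $D_{pt}=X\cap T_{pt}X\cap Q_{pt}$. For general $pt$ this is a proper intersection of the expected dimension $n-2$ and is generically reduced along $D_{pt}$ — indeed inside $T_{pt}X$ it is the cone over the $(2,3)$ complete intersection $F_{pt}$, which is smooth for general $pt$ — so $[D_{pt}]=[X\cap T_{pt}X\cap Q_{pt}]=h\cdot(2h)=2h^2$. Substituting into the displayed equality gives $\Psi_C(g'\cdot\Phi_C(h^m))=h^{m-1}\cdot 2h^2=2h^{m+1}$, which is the assertion (for $m\le n-2$, where the left-hand side is non-trivial).

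The delicate point throughout is the multiplicity bookkeeping: one has to check that $g'$, $\Phi_C(h^m)$, their product, and the swept-out locus are represented by reduced cycles with multiplicity one, that $p_C$ and $q_C$ restricted to the relevant loci are generically injective, and that $X\cap T_{pt}X\cap Q_{pt}$ is generically reduced along $D_{pt}$ — in particular that the cone vertex $pt$, which is the unique singular point of $X\cap T_{pt}X$, contributes no excess. All of this follows from the genericity of $C$ (hence of $pt$) and of the auxiliary linear subspaces, together with the expected-dimension statement of Lemma \ref{lem dimension} and the smoothness of $F_{pt}$ for general $pt$; once these transversality facts are recorded, the chain of equalities above is immediate.
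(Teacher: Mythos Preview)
Your argument is correct and proceeds along a genuinely different route from the paper's. The paper begins with the identification $\Psi_C(g'\cdot\Phi_C(h^m))=\Psi(F_x\cdot\Phi(h^m))$ and then reruns the ``third intersection point'' machinery of Theorem~\ref{fundamental relation}: it builds the family $P\to M$ of lines joining $x$ to a general linear section $M=h^m$, decomposes the pullback of $X$ as $D_1+D_2+D$, uses the key identity $h|_D+E=2\pi'^*(h|_M)$, and reads off $g_*E=2h^{m+1}$ after computing the contributions of each piece. Your approach bypasses this apparatus entirely: you recognise the cycle directly as $D_{pt}\cap\Lambda'$ and reduce to the classical fact $[D_{pt}]=2h^2$, which you obtain from the description of $D_{pt}$ as the intersection of $X$ with its tangent hyperplane and polar quadric at $pt$. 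This is more elementary and self-contained, and isolates the constant $2$ as the degree of the polar quadric rather than extracting it from the residual-intersection bookkeeping; the paper's method, on the other hand, is uniform with the proof of Theorem~\ref{fundamental relation} and reuses that infrastructure without new ingredients. One small point worth making explicit in your write-up: the equality $[D_{pt}\cap\Lambda']=[D_{pt}]\cdot h^{m-1}$ requires that the intersection of $D_{pt}$ with $\Lambda'\cap X$ be proper and generically transverse even though $\Lambda'$ passes through the cone point $pt$; this holds because cutting the cone $D_{pt}$ by a general linear space through its vertex yields the cone over a general linear section of the smooth base $F_{pt}$, which is reduced of the expected dimension, and the single point $pt$ cannot contribute an excess component.
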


\begin{proof}
First we note that
$\Psi_C(g'\cdot\Phi_C(h^m))=\Psi(F_x\cdot\Phi(h^m))$. Let $M\subset
X$ be a general complete intersection of hyperplanes that represents
$h^m$. Choose $x\in X$ to be a general point. Similar to the proof
of Theorem \ref{fundamental relation}, we consider the following
diagram
$$
\xymatrix{
 D_1+D_2+D \ar[r]\ar[d] &Y\ar[r]\ar[d] &X\ar[d]\\
 P\ar[r]\ar[d]^\pi &\mathrm{G}(1,2,n+2)\ar[r]\ar[d] &\PP^{n+1}\\
 M\ar[r]^\varphi &\mathrm{G}(2,n+2) &
}
$$
where all the squares are fiber products; $\varphi(x')$ is the line
through $x$ and $x'$, for all $x'\in M$; $D_1\subset P$ is a section
of $\pi$ and the natural morphism $g_1:D_1\rightarrow X$ contracts
$D_1$ to $x\in X$; $D_2$ is a section of $\pi$ and the natural
morphism $g_2:D_2\rightarrow X$ maps $D_2$ isomorphically to
$M\subset X$; $D$ corresponds to the third point in the intersection
of the lines with $X$. Let $g:D\rightarrow X$ be the natural
morphism. Let $\Delta\subset M$ be the intersection of $M$ and $D_x$
(recall that $D_x$ is the variety swept out by lines through $x$).
Then, by Bertini's theorem, $\Delta$ is smooth of codimension 2 in
$M$. As in Lemma \ref{lem blow up}, the natural map
$\pi':D\rightarrow M$ is the blow-up along $\Delta$. Let $E\subset
D$ be the exceptional divisor. As before, we have
\begin{equation}\label{eq id h E}
h|_D+E=2(\pi|_D)^*(h|_M)
\end{equation}
where $h|_D=g^*h$. One checks that the push-forward of
$\pi^*(h|_M)|_{(D_1+D_2+D)}$ to $X$ is equal to $3h^{m+1}$ and that
\begin{align*}
 (g_2)_*(\pi^*(h|_M)|_{ D_2} )& = M\cdot h = h^{m+1}\\
 (g_1)_*(\pi^*(h|_M)|_{ D_1} )& = 0
\end{align*}
It follows that
$$
g_*(\pi^*(h|_M)|_D) = 3h^{m+1}-h^{m+1}=2h^{m+1}.
$$
Note that the push-forward of $D+D_1+D_2$ to $X$ is $P|_X=3h^m$. We
also know that $(g_1)_*D_1=0$ and $(g_2)_*D_2=M=h^{m}$. Hence we get
$g_*D=2h^m$. As a consequence, we have $g_*(h|_D)=g_*D\cdot
h=2h^{m+1}$. We apply $g_*$ to the equality \eqref{eq id h E} and
note that $g_*E=\Psi(F_x\cdot\Phi(h^m))$. Then we get
$$
\Psi(F_x\cdot \Phi(h^m)) =
2g_*(\pi'^*h|_M)-g_*(g^*h)=2(2h^{m+1})-2h^{m+1}=2h^{m+1}.
$$
Hence the lemma follows.
\end{proof}

\begin{prop}\label{preserving prop}
The following statements hold in $\mathrm{H}^*(S_C,\Q)$.

(1) $g^m$ is a linear combination of $\Phi_C(h^{m+1})$ and
$g'\Phi_C(h^m)$; $g'g^{m-1}$ is a multiple of $g'\Phi_C(h^m)$. The
algebra $\Q[g,g']$ is generated by cycles of the form
$\Phi_C(h^{m+1})$ with $0\leq m\leq n-2$ and $g'\Phi_C(h^{m})$ with
$1\leq m \leq n-2$.

(2) The restriction of $\mathrm{H}^*(G)$ to $\mathrm{H}^*(S_C)$ is
equal to $\Q[g,g'g]\subset\Q[g,g']$.

(3) The action of $\sigma$ preserves the algebra $\Q[g,g']$.

(4) Under $\Psi_C$, the image of $\Q[g,g']$ is $\Q[h]_{0<\deg<n}$
where $h$ is the class of a hyperplane on $X$.
\end{prop}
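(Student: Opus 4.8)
The plan is to push everything through two computations in $\mathrm{H}^*(S_C,\Q)$. Write $\mathscr{E}_C$ for the pullback to $S_C$ of the canonical rank $2$ bundle $\mathscr{E}_2$ on $G(2,n+2)$, so that $P_C=\PP(\mathscr{E}_C)$ with $\zeta_C:=q_C^*h$ the relative $\calO(1)$-class, $c_1(\mathscr{E}_C)=g$, and $\zeta_C^2=g\,\zeta_C-c_2(\mathscr{E}_C)$ on $P_C$ (cf. the proof of Lemma \ref{lem blow up}). The first step is the pair of identities $g'^2=0$ and $c_2(\mathscr{E}_C)=e\,g'g$. The first holds because $g'=q_0^*[pt]$ and $[pt]^2=0$ in $\mathrm{H}^*(C)=\mathrm{H}^*(\PP^1)$. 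For the second, the marked point of $S_C$ defines a canonical section $\tau_C:S_C\to P_C$ of $p_C$ whose $\calO(1)$-pullback has first Chern class $\tau_C^*\zeta_C=\tau_C^*q_C^*h=q_0^*(h|_C)=e\,g'$ (since $q_C\circ\tau_C=q_0:S_C\to C$ and $h|_C$ has degree $e$); the two Chern roots of $\mathscr{E}_C$ are then $e\,g'$ and $g-e\,g'$, and using $g'^2=0$ their product is $c_2(\mathscr{E}_C)=e\,g'g$. Pushing the relation $\zeta_C^2=g\,\zeta_C-e\,g'g$ down along $p_C$ by induction (again using $g'^2=0$ to kill cross terms) gives $\Phi_C(h)=1$ and $\Phi_C(h^{m+1})=g^m-(m-1)e\,g'g^{m-1}$ for $m\ge1$. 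Part (1) follows: $g^m=\Phi_C(h^{m+1})+(m-1)e\,g'\Phi_C(h^m)$ and $g'g^{m-1}=g'\Phi_C(h^m)$, and from these one recovers every $g^i$ and $g'g^j$, so the listed classes span $\Q[g,g']$. Part (2) is then immediate: $\mathrm{H}^*(G(2,n+2),\Q)$ is generated as a ring by $c_1(\mathscr{E}_2)$ and $c_2(\mathscr{E}_2)$, which restrict to $g$ and $c_2(\mathscr{E}_C)=eg'g$, so the image of $\mathrm{H}^*(G)$ in $\mathrm{H}^*(S_C)$ is $\Q[g,eg'g]=\Q[g,g'g]$.

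For part (4) I would combine the formulas of (1) with two pushforward identities already available. Lemma \ref{multiple of h lemma} gives $\Psi_C(g'\Phi_C(h^m))=2h^{m+1}$ for $1\le m\le n-2$, and the fundamental relations give $\Psi_C\Phi_C(h^k)\in\Q h^k$: by the projection formula $\Psi_C\circ\Phi_C(\gamma)=\Psi(\Phi(\gamma)\cdot F_C)$, so Theorem \ref{fundamental relation}(2) yields $\Psi_C\Phi_C(h^k)=7e\,h^k$ for $2\le k\le n-2$, while for $k=n-1$ relation (3) gives a multiple of $h^{n-1}$ — the extra term $2\deg(\gamma)C$ does no harm since $\mathrm{H}^{2n-2}(X,\Q)=\Q h^{n-1}$. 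Consequently $\Psi_C(g^m)\in\Q h^{m+1}$ for $1\le m\le n-2$, while $\Psi_C(1)=(q_C)_*[P_C]=\deg(q_C)\,[D_C]=5e\deg(q_C)\,h\ne0$ (using $\dim P_C=\dim D_C=n-1$ and $[D_C]=5eh$ from Lemma \ref{lem dimension}). Hence $\Psi_C$ sends every basis vector of $\Q[g,g']$ into $\Q[h]_{0<\deg<n}$, and conversely $\Psi_C(1)$ together with the classes $\Psi_C(g'\Phi_C(h^m))=2h^{m+1}$, $m=1,\dots,n-2$, already spans $\Q\langle h,\dots,h^{n-1}\rangle$; this proves (4).

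Part (3) then drops out of the identity $\Phi_C\circ\Psi_C=\sigma-1+(\Gamma_v)_*$ established in the proof of Theorem \ref{sigma identity}: for $\alpha\in\Q[g,g']$ we get $\sigma(\alpha)=\alpha+\Phi_C(\Psi_C(\alpha))-(\Gamma_v)_*(\alpha)$, and each of the three terms lies in $\Q[g,g']$ — the first trivially, the second because $\Psi_C(\alpha)\in\Q[h]$ by (4) and $\Phi_C(h^k)\in\Q[g,g']$ by the formula above, the third because Lemma \ref{Gamma_v lemma}(iv) says $(\Gamma_v)_*$ always lands in the span of $g^m$ and $g'g^{m-1}$.

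I expect the bookkeeping in (4) to be the only genuine nuisance: the coefficient $(m-1)e$ degenerates at $m=1$ (so there $\Psi_C(g)=\Psi_C\Phi_C(h^2)$, which for $n=3$ must be extracted from relation (3) rather than (2)), the fundamental relation acquires the stray $C$-term at the top weight $m=n-2$, and one has to check $\Psi_C(1)\ne0$ separately so that $h$ itself — not merely $h^2,\dots,h^{n-1}$ — lands in the image. One should also be mindful of the Grassmann-bundle sign conventions in the identity $c_2(\mathscr{E}_C)=eg'g$, although the final value is convention-free.
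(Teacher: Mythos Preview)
Your argument is correct and cleaner in places than the paper's. The main substantive difference is in (1) and (2): the paper invokes Schubert calculus on $G(2,n+2)$, identifying which Schubert varieties restrict nontrivially to $S_C$ and recognizing those restrictions as $\Phi_C(h^{m+1})$ and $e\,g'\Phi_C(h^m)$, whereas you bypass this entirely by computing $c_2(\mathscr{E}_C)=eg'g$ from the canonical section and then reading off the explicit formula $\Phi_C(h^{m+1})=g^m-(m-1)e\,g'g^{m-1}$ via the projective-bundle relation. Your route is more elementary and yields sharper information (exact coefficients rather than existence of a linear combination); the paper's route has the mild advantage of explaining geometrically why the degree-one piece $g'$ fails to come from $G$. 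For (3) the two arguments are essentially the same content reorganized: the paper evaluates $\sigma$ directly on the generators $\Phi_C(h^{m+1})$ and $g'\Phi_C(h^m)$ using Theorem~\ref{sigma identity}, while you use the operator identity $\sigma=1+\Phi_C\Psi_C-(\Gamma_v)_*$ from the proof of Theorem~\ref{sigma identity} together with (4) and Lemma~\ref{Gamma_v lemma}. For (4) your treatment of the bottom class is in fact more careful than the paper's: the paper's proof only says the images are multiples of $h^{m+1}$ without checking that $h$ itself is hit, whereas you verify $\Psi_C(1)=5eh\neq0$ (and indeed $\Psi_C(1)=\Psi_C\Phi_C(h)=\Psi(F_C)=[D_C]=5eh$, so $\deg(q_C)=1$).
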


\begin{proof}
Before proving the proposition, we review a bit of Schubert calculus.
We refer to \S14.7 of \cite{fulton} for the details of the general theory.
Schubert calculus on $G=\mathrm{G}(2,n+2)$ shows that
$\mathrm{CH}^m(G)=\mathrm{H}^{2m}(G)$ is generated by Schubert
varieties that are defined by flags $\PP^a\subset\PP^b\subset
\PP^{n+1}$ with $a< b\leq n+1$ and $a+b=2n+1-m$. A Schubert variety
is the space of lines meeting $\PP^a$ and contained in $\PP^b$. In our case,
most of these classes restrict to 0 on $S_C$
unless $b=n$ or $b=n+1$. If $b=n+1$ and $a=n-m$, where $0\leq m\leq
n-1$, then the corresponding Schubert class restricts to
$\Phi(h^{m+1})$ on $F$ and hence restricts to $\Phi_C(h^{m+1})$ on
$S_C$. If $b=n$ and $a=n-m+1$,where $2\leq m\leq  n$, then the
corresponding Schubert class restricts to
$\deg(C)\,g'\cdot\Phi_C(h^m)$ on $S_C$.
Here we note that $F_y\subset S_C$ represents $g'$ on $S_C$, for any
$y\in C$. As a cycle on $F$, the class of $F_y$ comes from the
restriction of some class from $G$. However, viewed as a class on
$S_C$, $F_y$ is not the restriction of any class from $G$. The
degree of a nonzero homogeneous element of $\Q[g,g']$ means the
degree of that element as a polynomial.

Since $g^m$ comes from $G$, we know that it can be written as a
linear combination of $\Phi_C(h^{m+1})$ and $g'\Phi_C(h^m)$. Hence
we also see that $g'g^{m-1}$ is a multiple of $g'\Phi_C(h^m)$ since
$(g')^2=0$. This proves (1).

The discussion of Schubert calculus above shows that the restriction
of $\mathrm{H}^*(G)$ to $S_C$ is generated by $\Phi_C(h^{m+1})$,
$0\leq m\leq n-2$ and $g'\Phi_C(h^m)$, $2\leq m\leq n-2$. Combine
this fact with (1), we see that the the image of the restriction
$\mathrm{H}^*(G)\rightarrow \mathrm{H}^*(S_C)$ consists of elements
$f\in\Q[g,g']$ such that the coefficient of the term $g'$ in $f$ is
zero. This proves that the image
of $\mathrm{H}^*(G)\rightarrow \mathrm{H}^{*}(S_C)$ is $\Q[g,g'g]$.

To prove (3), we only need to consider the action of $\sigma$ on
$\Phi_C(h^{m+1})$ and $g'\Phi_C(h^m)$. By Proposition \ref{sigma
identity}, the following equations hold modulo $\Q[g,g']$.
\begin{align*}
\sigma(\Phi_C(h^{m+1})) &= \Phi_C(h^{m+1})+
\Phi_C\Psi_C(\Phi_C(h^{m+1}))\\
&=\Phi_C\circ\Psi(\Phi(h^{m+1})\cdot{F_C})\\
&=\begin{cases}
 \Phi_C(3e\deg(h^{m+1})h^{m+1} -2eh^{m+1}), & m<n-2,\\
 \Phi_C(3e\deg(h^{m+1})h^{m+1} -2eh^{m+1} -2\deg(h^{n-1})C), &m=n-2
\end{cases}\\
&=\begin{cases}
 0, &m<n-2\\
 -6\Phi_C(C), &m=n-2
\end{cases}
\end{align*}
Here the first equation uses Proposition \ref{sigma identity}; the
second equation uses the fact that $\Phi_C(h^{m+1})\in\Q[g,g']$; the
third equation uses Theorem \ref{fundamental relation}; the last one
uses the fact that the degree of $h^{m}$ with respect to the
polarization $h$ is 3. In the case $m=n-2$, we have that the
cohomology class of $\Phi_C(C)$ is
$5e^2[pt]\in\mathrm{H}^{2n-4}(S_C,\Q)$. Note that
$\mathrm{H}^{2n-4}(S_C,\Q)=\Q[pt]=\Q[g,g']_{\deg=n-2}$. Hence we
also have $\Phi_C(C)\in\Q[g,g']$. Similarly, the following
equalities hold modulo $\Q[g,g']$.
\begin{align*}
\sigma(g'\cdot \Phi_C(h^m)) &= \Phi_C\circ\Psi_C(g'\cdot
\Phi_C(h^m))
+g'\cdot\Phi_C(h^m)\\
& = 2\Phi_C(h^{m+1})+g'\Phi_C(h^m)
\end{align*}
where we use Lemma \ref{multiple of h lemma}.

To prove (4), we also consider the action of $\Psi_C$ on
$\Phi_C(h^{m+1})$ and $g'\Phi_C(h^m)$. By Theorem \ref{fundamental
relation} and Lemma \ref{multiple of h lemma}, we know that the
images are multiples of $h^{m+1}$, $0\leq m\leq n-2$ (we have this
range since otherwise the cycles on $S_C$ are automatically zero for
dimension reasons).
\end{proof}

\section{The quadratic relation and Prym-Tjurin construction}

Let $C\subset X$ be a general smooth rational curve on $X$ with
$e=\deg(C)\geq 2$. Let $S_C=q^{-1}(C)$. As was shown in Lemma \ref{lem smoothness of Sc}, $S_C$ is smooth and it is the normalization of the variety
of all lines meeting $C$. We will use the notation from the previous
section.

\begin{defn}
We define the \textit{primitive cohomology},
$\mathrm{H}^*(S_C,\Z)^\circ$, of $S_C$, to be
$$
\{\alpha\in \mathrm{H}^*(S_C,\Z):\alpha\cup
\beta=0,\,\forall\beta\in\Q[g,g']\}.
$$
We define the \textit{primitive Chow group},
$\mathrm{CH}^*(S_C)^\circ$, of $S_C$ , to be
$$
\{\alpha \in\mathrm{CH}^*(S_C):\alpha\cdot
\beta=_{num}0,\,\forall\beta\in\Q[g,g']\}.
$$
An element $\fa\in \mathrm{H}^*(S_C)^\circ$ is called a
\textit{primitive cohomology class}; an element
$\fa\in\mathrm{CH}^*(S_C)^\circ$ is called a \textit{primitive cycle
class}.
\end{defn}

\begin{rmk}
Since the restriction of $\mathrm{H}^*(G,\Q)$ to $S_C$ is
$\Q[g,gg']$, we deduce that $H^{n-2}(S_C,\Z)^\circ$ ($n\neq 4$)
consists of elements $\alpha$ such that $\alpha\cup\beta=0$ for all
$\beta$ coming from $G$. Note that the only difference between $\Q[g,g']$
and $\Q[g,gg']$ is the element $g'$ since $(g')^2=0$; when $n\neq 4$, the equation $\alpha\cup g'=0$
is automatic for dimension reasons since $g'\in \mathrm{H}^2(S_C)$ and
$\alpha\in\mathrm{H}^{n-2}(S_C)$ are not of complementary dimension.
If $n$ is odd, then every cohomology class in $\mathrm{H}^{n-2}(S_C)$
is primitive by definition.
\end{rmk}

By Proposition \ref{preserving prop}, the action $\sigma$ induces an
action, still denoted by $\sigma$, on the primitive
cohomology and the primitive Chow groups of $S_C$. This is because
$\sigma$ is symmetric and preserves the classes in $\Q[g,g']$. Hence
$\alpha\cdot\sigma(\beta)=\sigma(\alpha)\cdot\beta$. If $\alpha$ is
a primitive class and $\beta\in\Q[g,g']$, then the above identity
shows that $\sigma(\alpha)\cdot\beta=0$ and hence $\sigma(\alpha)$
is still primitive.

\begin{thm}\label{main theorem}
Let $C\subset X$ be a general rational curve of degree $e\geq 2$ as
above. Let $\sigma$ be the action of the incidence correspondence on
either $\mathrm{H}^*(S_C)^\circ$ or $\mathrm{CH}^*(S_C)^\circ$. Then
the following are true.

(1) On the primitive part of either the cohomology groups or the
Chow groups, $\sigma$ satisfies the following quadratic relation
$$
(\sigma-1)(\sigma +2e-1)=0
$$

(2) The map $\Phi_C$ induces an isomorphism of Hodge structures
$$
\Phi_C:\mathrm{H}^{n}(X,\Z)_{\text{prim}} \rightarrow
P(\mathrm{H}^{n-2}(S_C,\Z)^\circ,\sigma)(-1)
$$
The intersection forms are related by the following identity
$$
\Phi_C(\alpha)\cdot\Phi_C(\beta)=-2e\,\alpha\cdot\beta
$$

(3) The map $\Phi_C$ induces an isomorphism
$$
\Phi_C:\mathrm{A}_i(X)_\Q\rightarrow
P(\mathrm{CH}_{i-1}(S_C)^\circ_\Q,\sigma)
$$
\end{thm}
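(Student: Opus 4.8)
The plan is to derive the theorem formally from two identities that are already available. The first is a consequence of Theorem~\ref{sigma identity}: for a cycle $\fa$ on $S_C$ one has $\sigma(\fa)=\Phi_C(\Psi_C(\fa))+\fa+(\text{const})$, where the constant is a fixed combination of $g^m$ and $g'g^{m-1}$ controlled by the intersection numbers $\fa\cdot g^m$ and $\fa\cdot g'g^{m-1}$ (and vanishes in odd degree). When $\fa$ is primitive all of these numbers are zero, so
\[
\sigma-1=\Phi_C\circ\Psi_C\qquad\text{on }\mathrm{H}^{n-2}(S_C,\Z)^\circ\text{ and on }\mathrm{CH}_*(S_C)^\circ_\Q.
\]
The second comes from Theorem~\ref{fundamental relation}. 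Since $i_C\colon S_C\to F_C$ is birational, $(i_C)_*[S_C]=F_C$, and the projection formula gives $\Psi_C\circ\Phi_C(\gamma)=\Psi(\Phi(\gamma)\cdot F_C)$; substituting this into Theorem~\ref{fundamental relation} and using that $\deg(\gamma)=0$ for a primitive cohomology class and for $\gamma\in\mathrm{A}_i(X)$ (in the $1$-cycle case the extra term $2\deg(\gamma)C$ also drops out), we get
\[
\Psi_C\circ\Phi_C=-2e\cdot\mathrm{id}\qquad\text{on }\mathrm{H}^n(X,\Z)_{prim}\text{ and on }\mathrm{A}_i(X)_\Q.
\]
A routine adjointness argument, together with Proposition~\ref{preserving prop} and the fact that the non-primitive cohomology of a cubic is generated by powers of $h$, shows that $\Phi_C$ and $\Psi_C$ send primitive classes to primitive classes, so these two identities can be composed freely.

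Part~(1) is then formal. Writing $\tau=\sigma-1=\Phi_C\Psi_C$ on the primitive part, we have $\tau^2=\Phi_C(\Psi_C\Phi_C)\Psi_C=-2e\,\Phi_C\Psi_C=-2e\,\tau$, whence $(\sigma-1)(\sigma+2e-1)=\tau(\tau+2e)=\tau^2+2e\tau=0$. For part~(2), the identity $\Psi_C\Phi_C=-2e$ shows that $\Phi_C$ is injective on $\mathrm{H}^n(X,\Z)_{prim}$ (recall all groups are modulo torsion). By the surjectivity theorem stated in the introduction and proved in Section~5, $\Psi_C\colon\mathrm{H}^{n-2}(S_C,\Z)^\circ\to\mathrm{H}^n(X,\Z)_{prim}$ is surjective, so
\[
P(\mathrm{H}^{n-2}(S_C,\Z)^\circ,\sigma)=\mathrm{Im}(\sigma-1)=\mathrm{Im}(\Phi_C\Psi_C)=\Phi_C\big(\mathrm{H}^n(X,\Z)_{prim}\big),
\]
and therefore $\Phi_C$ is a bijection onto the Prym--Tjurin part. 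Because $\Phi_C=(p_C)_*q_C^*$ is induced by an algebraic correspondence it is a morphism of Hodge structures, the Tate twist $(-1)$ making it weight-preserving, and since $\sigma$ is algebraic the subspace $\mathrm{Im}(\sigma-1)$ is a sub-Hodge structure; a bijective morphism of Hodge structures is an isomorphism. Finally, adjointness of $\Psi_C$ and $\Phi_C$ combined with $\Psi_C\Phi_C=-2e$ yields $\Phi_C(\alpha)\cdot\Phi_C(\beta)=\Psi_C\Phi_C(\alpha)\cdot\beta=-2e\,\alpha\cdot\beta$.

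Part~(3) is the same argument with $\Q$-coefficients, and here no surjectivity input is needed because $2e$ is invertible: on the one hand $\Phi_C(\gamma)=-\tfrac1{2e}\Phi_C\Psi_C\Phi_C(\gamma)=-\tfrac1{2e}(\sigma-1)\big(\Phi_C(\gamma)\big)$, so the image of $\Phi_C$ lies inside $\mathrm{Im}(\sigma-1)$; on the other hand $(\sigma-1)\fa=\Phi_C(\Psi_C(\fa))$ with $\Psi_C(\fa)\in\mathrm{A}_i(X)_\Q$ (its $h$-degree vanishes by adjointness and the primitivity of $\fa$), so $\mathrm{Im}(\sigma-1)\subseteq\Phi_C(\mathrm{A}_i(X)_\Q)$; and $\Phi_C$ is injective on $\mathrm{A}_i(X)_\Q$ since $\Psi_C\Phi_C=-2e$ is. Hence $\Phi_C\colon\mathrm{A}_i(X)_\Q\to P(\mathrm{CH}_{i-1}(S_C)^\circ_\Q,\sigma)$ is an isomorphism.

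The essential difficulties have already been absorbed into Theorems~\ref{fundamental relation} and~\ref{sigma identity} and the surjectivity of $\Psi_C$; what remains is bookkeeping. The points that demand care are: checking that the constants in Theorem~\ref{sigma identity} really vanish on primitive classes, which amounts to matching the degree conventions of Lemma~\ref{Gamma_v lemma} with the definition of the primitive groups; verifying that $\Phi_C$ and $\Psi_C$ preserve primitivity; and observing that the integral statement in~(2) genuinely uses surjectivity of $\Psi_C$, whereas~(3) sidesteps it by inverting $2e$ but correspondingly holds only rationally.
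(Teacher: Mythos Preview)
Your proof is correct and follows essentially the same route as the paper: both arguments reduce everything to the two identities $\sigma-1=\Phi_C\circ\Psi_C$ on primitive classes of $S_C$ (from Theorem~\ref{sigma identity}) and $\Psi_C\circ\Phi_C=-2e$ on primitive classes of $X$ (from Theorem~\ref{fundamental relation}), then combine them with the surjectivity of $\Psi_C$ from Section~5 for part~(2) and with invertibility of $2e$ for part~(3). Your treatment is slightly more explicit about why the constants in Theorem~\ref{sigma identity} vanish on primitive classes and about why rational coefficients obviate the surjectivity input in~(3), but these are exactly the points the paper handles via the inline lemma and Proposition~\ref{surjectivity of Psi}(2).
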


\begin{proof}
Since $\Gamma_v$ is also symmetric, we know that it acts on the
primitive cohomology and the primitive Chow groups. But by Lemma
\ref{Gamma_v lemma}, the image of $\Gamma_v$ is always non-primitive
unless it is zero. Hence we get that $(\Gamma_v)_*=0$ on primitive
cohomology and Chow groups. Hence on the primitive part of the
cohomology group and the Chow groups, we have
$$
\sigma=\Phi_C\circ\Psi_C+1.
$$
The next fact that we need is
\begin{lem}
If $\fa$ is a primitive class in either the cohomology group or the
Chow groups of $S_C$, then $\Psi_C(\fa)$ has $h$-degree zero.
\end{lem}
The proof of the above lemma is easy. We note that $\Psi_C(\fa)\cdot
h^i=\fa\cdot\Phi_C(h^i)=0$, since $\Phi_C(h^i)\in\Q[g,g']$; see (1)
of Proposition \ref{preserving prop}. Now we can prove statement (1)
of the theorem. Theorem \ref{fundamental relation} shows that, on
the primitive cohomology and the primitive Chow groups, we have
$$
\Psi_C(\Phi_C(\fa)) + 2e\fa =0.
$$
From this we get
\begin{align*}
(\sigma-1)(\sigma+2e-1)(\fa)& = \Phi_C\circ\Psi_C(\Phi_C\circ\Psi_C
+2e)(\fa)\\
&=\Phi_C(\Psi_C\circ\Phi_C(\Psi_C(\fa)))+2e\Phi_C\circ\Psi_C(\fa)\\
&=\Phi_C(-2e\Psi_C(\fa)) + 2e\Phi_C\circ\Psi_C(\fa),\quad (\text{by
Theorem \ref{fundamental relation}})\\
&=0
\end{align*}
Now we prove (2). For simplicity, we write $P$ for
$P(\mathrm{H}^{n-2}(S_C,\Z)^\circ,\sigma)$. Since
$\Phi_C\circ\Psi_C=\sigma-1$ and that $\Psi_C$ is onto (Propositioon
\ref{surjectivity of Psi}), we know that the image of $\Phi_C$ is
exactly $P$. By Theorem \ref{fundamental relation}, we know that
$\Psi_C\circ\Phi_C=-2e$. This implies that $\Phi_C$ is injective.
Hence $\Phi_C:H^n(X,\Z)_{prim}\rightarrow P$ is an isomorphism.
$\Phi_C$ respects the Hodge structures and hence is an isomorphism
of Hodge structures. The intersection forms are related by
\[
 \Phi_C(\alpha)\cdot \Phi_C(\beta) = \alpha\cdot \Psi_C(\Phi_C(\beta))
= \alpha\cdot (-2e\beta) = -2e \alpha\cdot \beta
\]

Statement (3) can be proved exactly in the same
way.
\end{proof}

\begin{prop}\label{surjectivity of Psi}
(1) The homomorphism
$$
\Psi_C:\mathrm{H}^{n-2}(S_C,\Z)^\circ\rightarrow
\mathrm{H}^n(X,\Z)_{prim}
$$
on primitive cohomology is surjective. \\
(2) The homomorphism
$$
\Psi_C:\mathrm{CH}_m(S_C,\Q)^\circ\rightarrow \mathrm{A}_{m+1}(X,\Q)
$$
on primitive Chow groups is surjective.
\end{prop}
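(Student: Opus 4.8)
The plan is to obtain the rational statements directly from the fundamental relation, and to upgrade part (1) to integral coefficients by a Clemens--Letizia degeneration; the latter is where the work lies.

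\emph{Rational surjectivity.} Since $\Psi_C=\Psi\circ(i_C)_*$ and, by the projection formula, $\Phi(\gamma)\cdot F_C=(i_C)_*\Phi_C(\gamma)$, Corollary \ref{cor fundamental relation}, applied with the $1$-cycle taken to be $[C]$ (of degree $e$), yields the operator identity
$$
\Psi_C\circ\Phi_C=-2e
$$
on $\mathrm{H}^n(X,\Z)_{prim}$ and on $\mathrm{A}_{m+1}(X)_\Q$, the right-hand term $3e\deg(\gamma)h^{\bullet}$ dropping out because $\deg(\gamma)=0$ on those groups. Let $\fa$ be primitive (resp.\ of $h$-degree zero). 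Because $\Phi_C$ and $\Psi_C$ are adjoint for the intersection pairings and $\Psi_C(\Q[g,g'])\subset\Q[h]$ (Proposition \ref{preserving prop}(4)), for every $\beta\in\Q[g,g']$ of complementary degree one has $\Phi_C(\fa)\cdot\beta=\fa\cdot\Psi_C(\beta)=0$, so $\Phi_C(\fa)$ lies in $\mathrm{H}^{n-2}(S_C)^\circ$ (resp.\ $\mathrm{CH}_m(S_C)^\circ_\Q$); hence $\fa=\Psi_C\bigl(-\tfrac{1}{2e}\Phi_C(\fa)\bigr)$ with $-\tfrac{1}{2e}\Phi_C(\fa)$ primitive. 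This proves (2) outright and shows that in (1) the image of $\Psi_C$ contains $2e\cdot\mathrm{H}^n(X,\Z)_{prim}$, hence is a subgroup of finite index.

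\emph{Reduction of the integral statement to one vanishing cycle.} Work over the irreducible base $U$ parametrizing pairs $(X,C)$ with $C$ a general degree-$e$ rational curve; by Lemma \ref{lem smoothness of Sc} the families of $X$ and of $S_C$ are smooth and proper over $U$, and since $h$, $g=\Phi(h^2)$ and $g'=(q_0)^*[pt]$ extend over the family, $\Q[g,g']$ is a constant sub-local system, and hence so are its orthogonal complement $\mathrm{H}^{n-2}(S_C,\Z)^\circ$ and $\mathrm{H}^n(X,\Z)_{prim}$. Thus $\Psi_C$ is a morphism of local systems on $U$ and its image $\mathrm{Im}(\Psi_C)\subset\mathrm{H}^n(X,\Z)_{prim}$ is a sub-local system, i.e.\ a $\pi_1(U)$-stable subgroup of the fibre of finite index. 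By classical Lefschetz theory the monodromy of $U$ acts transitively up to sign on the vanishing cycles coming from one-nodal degenerations of the cubic, and these generate $\mathrm{H}_n(X,\Z)_{prim}$. Consequently it suffices to exhibit, for one such degeneration, a single vanishing cycle lying in $\mathrm{Im}(\Psi_C)$: its $\pi_1(U)$-orbit then lies in $\mathrm{Im}(\Psi_C)$ and generates $\mathrm{H}_n(X,\Z)_{prim}$, so $\mathrm{Im}(\Psi_C)=\mathrm{H}^n(X,\Z)_{prim}$ at one point of $U$, and hence at every point.

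\emph{The node computation (the main obstacle).} Degenerate $X$ in a pencil to a cubic $X_0$ with a single ordinary node $p_0$, with $C$ specializing to a general $C_0\subset X_0$ not passing through $p_0$; let $\delta\in\mathrm{H}_n(X_t,\Z)$ be the vanishing cycle. As $X_t\to X_0$ the smooth surface $S_{C_t}$ acquires finitely many isolated singularities, one over each of the lines of $X_0$ through $p_0$ meeting $C_0$ --- equivalently, one for each of the points of $C_0$ on the tangent cone of $X_0$ at $p_0$ --- so the family $S_{C_t}\to S_{C_0}$ carries its own vanishing cycles $\zeta_j\in\mathrm{H}^{n-2}(S_{C_t},\Z)$; these are primitive, being orthogonal to every class extending over the degeneration and in particular to $\Q[g,g']$. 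The heart of the matter is a local computation in an analytic neighbourhood of $p_0$ in the total space, where $X_t$ has the standard form $\sum_{i=0}^{n}z_i^2=t$ and $\delta$ is the real vanishing sphere: parametrizing the lines of $X_t$ close to the node that meet $C_t$, and tracking with orientations the chain they sweep out as the parameter runs over a $\zeta_j$, one shows that $\Psi_{C_t}$ sends a suitable integral combination of the $\zeta_j$ to $\pm\delta$. Granting this, $\delta\in\mathrm{Im}(\Psi_{C_t})$ and the previous paragraph concludes. I expect the orientation and multiplicity bookkeeping in this local model, together with the precise description of how $S_C$ degenerates along the lines through $p_0$, to be the only genuinely delicate points; the rest is formal or standard.
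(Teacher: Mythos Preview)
Your approach is essentially the paper's. Part (2) is identical: $\Psi_C\circ\Phi_C=-2e$ on degree-zero classes and $\Phi_C$ preserves primitivity by adjunction and Proposition~\ref{preserving prop}(4). For part (1) the paper likewise uses the Clemens--Letizia method, following Shimada's presentation, degenerating $X$ in a Lefschetz pencil so that $S_{C_t}$ acquires ordinary critical points $z_{ij}$ over each nodal fibre, with associated vanishing cycles $\sigma_{ij}$, and showing that each $\sigma_{ij}$ maps to $\pm\Sigma_i$ under $\Psi_C$.

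Two remarks on where your sketch differs. First, your monodromy reduction to a single vanishing cycle is unnecessary: the paper's argument exhibits every $\Sigma_i$ in the image directly, so one need not set up local systems over a parameter space of pairs $(X,C)$. Second, and more substantively, the ``orientation and multiplicity bookkeeping'' you flag as the main obstacle is handled in the paper not by tracking the swept-out chain directly, but by a pairing trick. One proves (this is Shimada's Proposition~2) that for any $\fa\in\mathrm{H}_{n-2}(S_{C_t},\Z)$,
\[
\fa\cdot\bigl([\sigma_{i1}]+\cdots+[\sigma_{il}]\bigr)=\Psi_C(\fa)\cdot[\Sigma_i],
\]
by choosing local coordinates adapted to the incidence $P_j\subset\PP^{n+1}$ near the node and matching intersection points via a section $s_j$. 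Knowing a priori that $\Psi_C(\sigma_{i1})=\lambda[\Sigma_i]$ for some integer $\lambda$ (specialization), one then sets $\fa=\sigma_{i1}$: the left side is $\sigma_{i1}\cdot\sigma_{i1}=\pm 2$ (the $\sigma_{ij}$ for $j\neq 1$ live in disjoint Milnor balls) and the right side is $\lambda\,[\Sigma_i]\cdot[\Sigma_i]=\pm 2\lambda$, forcing $\lambda=\pm 1$. This replaces your proposed direct computation and is where the argument becomes sharp rather than merely plausible. Finally, for $n=4$ one must check separately that $\sigma_{ij}\cdot g'=0$ (so that $\sigma_{ij}$ is primitive in your sense, not just vanishing in $G$); this holds because a generic fibre $F_x$ of $q_0$ can be moved off the local Milnor ball.
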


\begin{proof}
Statement (1) follows from Theorem \ref{surjectivity on primitive} (for $n\neq 4$) and Remark \ref{rmk n equals 4} (for $n=4$).
The proof of (2) is easy since we have $\Q$ coefficients. Let
$\alpha\in \mathrm{CH}_{m+1}(X)_\Q$, take $\fa=-\frac{1}{2e}\Phi_C(\alpha)$.
Then we have $\alpha=\Psi_C(\fa)$. Now assume that $\alpha$ has
$h$-degree 0. For any $\fb\in\Q[g,g']$, we have
$\Phi_C(\alpha)\cdot\fb=\alpha\cdot\Psi_C(\fb)=0$ since
$\Psi_C(\fb)\in\Q[h]$. This means that $\Phi_C(\alpha)$ is a
primitive cycle class. This gives the surjectivity of $\Psi_C$ on
primitive Chow groups.
\end{proof}

Let $\pi:\mathscr{C}\rightarrow T$ be a family of curves together
with a morphism $f:\mathscr{C}\rightarrow X$. Then we can define the
Abel-Jacobi homomorphism $\Phi_T:=\pi_*f^*$ and the cylinder
homomorphism $\Psi_T:=f_*\pi^*$ on the cohomology groups and
the Chow groups. Let $C\subset X$ be a general smooth rational curve
of degree $e\geq 2$ as before. Then there is a natural incidence
correspondence $\Gamma_{T,C}\subset T\times S_C$ given by
$$
t\mapsto \sum([l_i],x_i)
$$
where $l_i$ are the incidence lines of $\mathscr{C}_t$ and $C$ (i.e.
lines meeting both curves) and $x_i=l_i\cap C$. This correspondence
induces
$$
(\Gamma_{T,C})_*:\mathrm{CH}_r(T)\rightarrow \mathrm{CH}_r(S_C)
$$
and
$$
[\Gamma_{T,C}]^*:\mathrm{H}^{n-2}(S_C,\Z)\rightarrow\mathrm{H}^{n-2}(T,\Z)
$$
We can define the primitive cohomology groups and the primitive Chow
groups of $T$ as the classes that are orthogonal to $\Phi_T(\Z[h])$.
Hence $\Gamma_{T,C}$ also induces homomorphism between the primitive
cohomology groups and the primitive Chow groups.

\begin{prop}
Let $\mathscr{C}\rightarrow T$ be a family of curves on $X$ and
$C\subset X$ a general rational curve of degree $e$ as above. All
the homomorphisms are restricted to primitive classes. Then the
following holds.

(1) The homomorphism $[\Gamma_{T,C}]^*:\mathrm{H}^{n-2}(S_C,\Z)^\circ \rightarrow \mathrm{H}^{n-2}(T,\Z)$ factors as $\Phi'_T\circ(\sigma -1)$, where $\Phi'_T:P(\mathrm{H}^{n-2}(S_C,\Z)^\circ,\sigma) \rightarrow \mathrm{H}^{n-2}(T,\Z)$ is a homomorphism such that the following diagram is commutative.
\[
\xymatrix{
 \mathrm{H}^{n-2}(S_C,\Z)\ar[d]_{\sigma -1}\ar[r]^{\Psi_C} &\mathrm{H}^{n}(X,\Z)_{prim}\ar[d]^{\Phi_T}\ar[ld]_{\Phi_C}\\
 P(\mathrm{H}^{n-2}(S_C,\Z)^\circ,\sigma)\ar[r] ^{\qquad\Phi'_T} &\mathrm{H}^{n-2}(T,\Z)
}
\]

(2) This image of $(\Gamma_{T,C})_*:\mathrm{CH}_r(T,\Q)^\circ\rightarrow \mathrm{CH}_r(S_C,\Q)^\circ$ is contained in the subgroup
$P(\mathrm{CH}_r(S_C,\Q)^\circ,\sigma)$, where $\mathrm{CH}_r(T,\Q)^\circ$ is the subgroup of primitive elements. In other words, the following diagram is commutative.
\[
\xymatrix{
  &\mathrm{A}_{r+1}(X,\Q)\ar[rd]^{\Phi_C} &\\
  \mathrm{CH}_r(T,\Q)^\circ \ar[rr]^{(\Gamma_{T,C})_*}\ar[ru]^{\Psi_T} &&P(\mathrm{CH}_r(S_C,\Q)^\circ,\sigma)
}
\]
\end{prop}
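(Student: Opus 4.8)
The plan is to reduce the whole proposition to one geometric input: $\Gamma_{T,C}$ is a composite correspondence. Write $\Gamma_{\Psi_C}=(p_C,q_C)_*[P_C]\subset S_C\times X$ for the cycle representing $\Psi_C$, so that $\Gamma_{\Psi_C}=\{(([l],x),y):y\in l\}$; write $\Gamma_{\Phi_T}=(f,\pi)_*[\mathscr C]\subset X\times T$ for the cycle representing $\Phi_T$, and similarly $\Gamma_{\Psi_T}\subset T\times X$ for $\Psi_T$ and $\Gamma_{\Phi_C}\subset X\times S_C$ for $\Phi_C$. First I would prove, in $\mathrm{CH}_{\dim T}(T\times S_C)$ (with $\Q$-coefficients for the Chow applications, integrally in cohomology), the cycle identity
\[
\Gamma_{T,C}=\Gamma_{\Phi_C}\circ\Gamma_{\Psi_T}=\bigl(\Gamma_{\Phi_T}\circ\Gamma_{\Psi_C}\bigr)^{t},
\]
the compositions being over $X$ as in \cite[\S16.1]{fulton}. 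For instance $\Gamma_{\Phi_T}\circ\Gamma_{\Psi_C}$ is the push-forward to $S_C\times T$ of $p_{S_CX}^{*}\Gamma_{\Psi_C}\cdot p_{XT}^{*}\Gamma_{\Phi_T}$ on $S_C\times X\times T$, whose support is $\{(([l],x),y,t):y\in l,\ y\in f(\mathscr C_t)\}$; a general point of its image in $S_C\times T$ is a pair $(([l],x),t)$ with $l$ a line on $X$ through $x=l\cap C$ that also meets $\mathscr C_t$, i.e. an incidence line of $\mathscr C_t$ and $C$, and the generic intersection multiplicity is $1$ since such an $l$ meets $\mathscr C_t$ transversally in a single point. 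So set-theoretically the composite is $\Gamma_{T,C}^{t}$. The hard part, and the only non-formal point in the proof, is to upgrade this to an equality of cycles: one must verify that the fibre product above carries no component of dimension exceeding the expected value $\dim T$. This is where generality of $C$ and of the family $\mathscr C/T$ is used, just as in Lemma \ref{lem dimension} and Lemma \ref{lem smoothness of Sc}: for general data $C\cap\mathscr C_t=\varnothing$ for every $t$ (since $n\geq 3$), no line of $X$ meeting $C$ passes through a general point of $X$ — in particular through a singular point of a general $\mathscr C_t$ — and $l$ is never a component of $\mathscr C_t$; these exclude the candidate excess loci.

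Granting the cycle identity, statement (1) is formal. It gives $[\Gamma_{T,C}]^{*}=\Phi_T\circ\Psi_C$ as homomorphisms $\mathrm H^{n-2}(S_C,\Z)^{\circ}\to\mathrm H^{n-2}(T,\Z)$. For a primitive class $\fa$ one has $\Psi_C(\fa)\cdot h^{k}=\fa\cdot\Phi_C(h^{k})=0$ by Proposition \ref{preserving prop}(1), so $\Psi_C$ maps $\mathrm H^{n-2}(S_C,\Z)^{\circ}$ into $\mathrm H^{n}(X,\Z)_{prim}$; and $(\Gamma_v)_*=0$ on primitive classes by Lemma \ref{Gamma_v lemma}, so, exactly as in the proof of Theorem \ref{main theorem}, $\Phi_C\circ\Psi_C=\sigma-1$ on $\mathrm H^{n-2}(S_C,\Z)^{\circ}$. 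Since $\Phi_C:\mathrm H^{n}(X,\Z)_{prim}\to P:=P(\mathrm H^{n-2}(S_C,\Z)^{\circ},\sigma)$ is an isomorphism by Theorem \ref{main theorem}(2), I would set $\Phi'_T:=\Phi_T\circ(\Phi_C)^{-1}:P\to\mathrm H^{n-2}(T,\Z)$. Then $\Phi'_T\circ\Phi_C=\Phi_T$ on $\mathrm H^{n}(X,\Z)_{prim}$, and on primitive classes
\[
\Phi'_T\circ(\sigma-1)=\Phi_T\circ(\Phi_C)^{-1}\circ\Phi_C\circ\Psi_C=\Phi_T\circ\Psi_C=[\Gamma_{T,C}]^{*},
\]
which is both the asserted factorization and the commutativity of the diagram in (1).

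Statement (2) follows the same pattern over $\Q$. For $\beta\in\mathrm{CH}_r(T,\Q)^{\circ}$, the projection formula gives $\deg\bigl(\Psi_T(\beta)\cdot h^{r+1}\bigr)=\deg\bigl(\pi^{*}\beta\cdot(f^{*}h)^{r+1}\bigr)=\deg\bigl(\beta\cdot\Phi_T(h^{r+1})\bigr)=0$, so $\Psi_T(\beta)\in\mathrm A_{r+1}(X)_\Q$; hence $\Phi_C(\Psi_T(\beta))\in P(\mathrm{CH}_r(S_C)^{\circ}_\Q,\sigma)$ by Theorem \ref{main theorem}(3). Combined with the Chow form of the cycle identity, which reads $(\Gamma_{T,C})_*=\Phi_C\circ\Psi_T$ on Chow groups with $\Q$-coefficients, this shows simultaneously that the image of $(\Gamma_{T,C})_*:\mathrm{CH}_r(T,\Q)^{\circ}\to\mathrm{CH}_r(S_C,\Q)^{\circ}$ is contained in $P(\mathrm{CH}_r(S_C)^{\circ}_\Q,\sigma)$ and that the displayed triangle commutes. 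In summary, once the composition-of-correspondences identity of the first paragraph is in hand, both parts are immediate consequences of Theorem \ref{main theorem}, and all the real work sits in the dimension estimate that rules out excess components.
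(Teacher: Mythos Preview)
Your approach is essentially the paper's own: the proof there consists of the single sentence ``These are consequences of the identities $[\Gamma_{T,C}]^{*}=\Phi_T\circ\Psi_C$ and $(\Gamma_{T,C})_*=\Phi_C\circ\Psi_T$,'' and you have simply unpacked what lies behind that sentence --- first arguing for the correspondence identity $\Gamma_{T,C}=\Gamma_{\Phi_C}\circ\Gamma_{\Psi_T}$, then invoking Theorem~\ref{main theorem} to define $\Phi'_T:=\Phi_T\circ\Phi_C^{-1}$ and conclude. One remark: your emphasis on ruling out excess components is more than the paper demands. The equality $(\Gamma_{T,C})_*=\Phi_C\circ\Psi_T$ is needed only as an identity of \emph{maps}, and composition of correspondences in Fulton's sense (\cite[\S16.1]{fulton}) already satisfies $(\alpha\circ\beta)_*=\alpha_*\circ\beta_*$ regardless of whether the set-theoretic fibre product has the expected dimension; what you must check is that the refined composite agrees with the cycle $\Gamma_{T,C}$ as the paper defines it (generically, by $t\mapsto\sum([l_i],x_i)$), and for that the relevant point is just that the generic fibre of the projection to $T$ is finite and reduced. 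The discussion of singular points of $\mathscr C_t$ and of lines that are components of $\mathscr C_t$ is therefore not really the crux.
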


\begin{proof}
These are consequences of the identities
$[\Gamma_{T,C}]^*=\Phi_T\circ\Psi_C$ and
$(\Gamma_{T,C})_*=\Phi_C\circ\Psi_T$.
\end{proof}

Let $C_1$ and $C_2$ be two general rational curves on $X$ of degree
$e_1$ and $e_2$ respectively. Then there is a natural incidence
correspondence $\Gamma_{12}\subset S_{C_1}\times S_{C_2}$ defined by
$$
([l],x)\mapsto \sum_{i=1}^{5e_2}([l_i],x_i)
$$
where $([l],x)\in S_{C_1}$, $l_i$ are the incidence lines of $l$ and
$C_2)$, $x_i=l_i\cap C_2$. Let $\gamma_{12}=(\Gamma_{12})_*$ be the
induced homomorphism on either the cohomology groups or the Chow
groups. Note that by definition, we have
$\gamma_{12}=\Phi_{C_2}\circ\Psi_{C_1}$. Since both $\Phi_C$ and
$\Psi_C$ respects primitive classes, the above identity implies that
$\gamma_{12}$ takes primitive classes to primitive classes. We still
use $\gamma_{12}$ to denote the action on the primitive cohomology
and the primitive Chow groups.

\begin{prop}
Let $C_1,\,C_2\subset X$ be two general rational curves of degree at
least 2 and $\gamma_{12}$ be the homomorphism induced by incidence
correspondence as above. Let $\sigma_1$ and $\sigma_2$ be the action
of the self incidence correspondence on $S_{C_1}$ and $S_{C_2}$ respectively.
Then the following are true.

(1) The image of
$\gamma_{12}:\mathrm{H}^{n-2}(S_{C_1},\Z)^\circ\rightarrow
\mathrm{H}^{n-2}(S_{C_2},\Z)^\circ$ is always in the Prym-Tjurin
part. Furthermore there is an isomorphism of Hodge structures
$$t_{12}:P(\mathrm{H}^{n-2}(S_{C_1},\Z)^\circ,\sigma_1)\rightarrow
P(\mathrm{H}^{n-2}(S_{C_2},\Z)^\circ,\sigma_2)$$ such that
$\Phi_{C_2}=t_{12}\circ\Phi_{C_1}$ and
$\gamma_{12}=t_{12}(\sigma_1-1)$.

(2) The same conclusions as in (1) hold for primitive Chow groups
with $\Q$-coefficient.
\end{prop}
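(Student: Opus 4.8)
The plan is to obtain the proposition formally from Theorem \ref{main theorem}. For $j=1,2$ that theorem provides an isomorphism of Hodge structures $\Phi_{C_j}\colon\mathrm{H}^{n}(X,\Z)_{\mathrm{prim}}\to P(\mathrm{H}^{n-2}(S_{C_j},\Z)^\circ,\sigma_j)(-1)$ (part (2)) and an isomorphism of $\Q$-vector spaces $\Phi_{C_j}\colon\mathrm{A}_i(X)_\Q\to P(\mathrm{CH}_{i-1}(S_{C_j})^\circ_\Q,\sigma_j)$ (part (3)). So I would simply set $t_{12}:=\Phi_{C_2}\circ\Phi_{C_1}^{-1}$, a composite of isomorphisms of Hodge structures (respectively of $\Q$-vector spaces), hence again an isomorphism; the Tate twist $(-1)$ occurs on both source and target, so it plays no role. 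The relation $\Phi_{C_2}=t_{12}\circ\Phi_{C_1}$ then holds by construction.

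It remains to identify $\gamma_{12}$. The first ingredient is $\gamma_{12}=\Phi_{C_2}\circ\Psi_{C_1}$, which holds by the definition of $\Gamma_{12}$ (noted in the text preceding the proposition); it is clean because $C_1$ and $C_2$ are disjoint general curves, so no excess-dimensional component of the incidence locus arises, in contrast with the comparison of $I_C$ and $I'_C$. The second ingredient, established inside the proof of Theorem \ref{main theorem} via the vanishing $(\Gamma_v)_*=0$ on primitive classes (Lemma \ref{Gamma_v lemma}), is the identity $\sigma_j-1=\Phi_{C_j}\circ\Psi_{C_j}$ on the primitive part. Granting these, I would note that $\Psi_{C_1}$ carries primitive classes of $S_{C_1}$ to primitive classes of $X$, since $\Psi_{C_1}(\fa)\cdot h^k=\fa\cdot\Phi_{C_1}(h^k)=0$ by Proposition \ref{preserving prop}(1), and dually, using Proposition \ref{preserving prop}(4) and the projection formula, $\Phi_{C_2}$ carries primitive classes of $X$ into primitive classes of $S_{C_2}$. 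Hence $\gamma_{12}=\Phi_{C_2}\circ\Psi_{C_1}$ preserves primitivity and has image contained in $\mathrm{Im}(\Phi_{C_2})=P(\mathrm{H}^{n-2}(S_{C_2},\Z)^\circ,\sigma_2)$ (respectively its Chow analogue), which is the first assertion of (1); and the factorization is purely formal,
\[
t_{12}\circ(\sigma_1-1)=\Phi_{C_2}\circ\Phi_{C_1}^{-1}\circ\Phi_{C_1}\circ\Psi_{C_1}=\Phi_{C_2}\circ\Psi_{C_1}=\gamma_{12}.
\]
Running the same argument verbatim with $\Q$-coefficients and Theorem \ref{main theorem}(3) in place of (2) gives statement (2).

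I do not expect a genuine obstacle here: once Theorem \ref{main theorem} and Lemma \ref{Gamma_v lemma} are available the argument is entirely formal. The only points deserving care are the clean identity $\gamma_{12}=\Phi_{C_2}\circ\Psi_{C_1}$ (absence of excess intersection) and the fact that $\Phi_{C_1}$ is already invertible with $\Z$-coefficients in the cohomology case --- precisely Theorem \ref{main theorem}(2) --- so that $t_{12}$ is an integral isomorphism there.
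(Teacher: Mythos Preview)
Your proposal is correct and follows essentially the same approach as the paper: the paper's proof also defines $t_{12}=\Phi_{C_2}\circ\Phi_{C_1}^{-1}$ using Theorem \ref{main theorem} and remarks that the required identities are then easily checked. Your version simply spells out those checks (the factorization $\gamma_{12}=\Phi_{C_2}\circ\Psi_{C_1}$, the identity $\sigma_j-1=\Phi_{C_j}\circ\Psi_{C_j}$ on primitive classes, and preservation of primitivity) that the paper leaves implicit.
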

\begin{proof}
For simplicity, we write
$\Lambda_i=\mathrm{H}^{n-2}(S_{C_i},\Z)^\circ$ for the primitive
cohomology and $P_i=\mathrm{Im}(\sigma_i-1)$ for the Prym-Tjurin
part, $i=1,2$. Let $\Lambda=\mathrm{H}^n(X,\Z)_{prim}$,
$\Phi_i=\Phi_{C_i}:\Lambda\rightarrow P_i$. Then one easily checks
that $t_{12}=\Phi_2\circ\Phi_1^{-1}$ satisfies (1). The proof of (2)
goes in the same way.
\end{proof}

\section{Surjectivity of $\Psi_C$ on primitive cohomology}
In this section we supply a proof of the surjectivity of $\Psi_C$ on
the primitive cohomologies. To do this, it is more convenient to
consider homology instead of cohomology. We define
\begin{equation}
V_{n-2}(S_C,\Z)=\ker\{\mathrm{H}_{n-2}(S_C,\Z)\rightarrow\mathrm{H}
_{n-2}(G,\Z)\},
\end{equation}
and
\begin{equation}
V_n(X,\Z)=\ker\{\mathrm{H}_n(X,\Z)\rightarrow
\mathrm{H}_n(\PP^{n+1},\Z)\}.
\end{equation}
Under the Poincar\'e duality $\mathrm{H}_{n-2}(S_C,\Z)\cong
\mathrm{H}^{n-2}(S_C,\Z)$, the subspace $V_{n-2}(S_C,\Z)$
corresponds to $\mathrm{H}^{n-2}(S_C,\Z)^\circ$ if $n\neq 4$. When
$n=4$, since the class $g'$ is not from $G(2,6)$, we see that
$V_2(S,\Z)$ is bigger than $\mathrm{H}^2(S_C,\Z)^\circ$. Actually,
we have
$$
\mathrm{H}^2(S_C,\Z)^\circ = \{\sigma\in V_2(S_C,\Z): g'\cdot
\sigma=0\}.
$$
Similarly the Poincar\'e duality on $X$ allows us to identify
$V_n(X,\Z)$ with $\mathrm{H}^n(X,\Z)_{prim}$. Then the surjectivity
of
$$
\Psi_C:\mathrm{H}^{n-2}(S_C,\Z)^\circ\rightarrow
\mathrm{H}^n(X,\Z)_{prim}
$$
when $n\neq 4$, is equivalent to the following

\begin{thm}\label{surjectivity on primitive}
The natural cylinder homomorphism
$$
\Psi_C: V_{n-2}(S_C,\Z)\rightarrow V_n(X,\Z)
$$
is surjective.
\end{thm}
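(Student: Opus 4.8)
The plan is to deduce the integral statement from the corresponding rational one together with a Clemens-style vanishing-cycle argument. By Theorem~\ref{fundamental relation} the composition $\Psi_C\circ\Phi_C$ equals $-2e\cdot\mathrm{id}$ on $\mathrm{H}^n(X,\Q)_{prim}$, and $\Phi_C$ carries $V_n(X,\Q)$ into $V_{n-2}(S_C,\Q)$; hence $\Psi_C$ is already surjective with $\Q$-coefficients, so that $\Psi_C\bigl(V_{n-2}(S_C,\Z)\bigr)$ is a subgroup of \emph{finite index} in $V_n(X,\Z)$. The whole content of the theorem is therefore that this index is $1$, and for that it is enough to produce inside the image a generating set of the lattice $V_n(X,\Z)$. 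The natural candidates are the vanishing cycles of a suitable one-parameter degeneration of $X$, so the strategy splits into: (a) choose a degeneration that keeps $C$ fixed and exhibits a vanishing cycle $\delta\in V_n(X_t,\Z)$ visibly in the image of $\Psi_{C,t}$; (b) spread this over the parameter space by a monodromy argument, using that the vanishing cycles span $V_n(X,\Z)$ integrally.

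For step (a) I would fix a general line $\ell_0$ meeting $C$ at one point and a general point $p_0\in\ell_0\setminus C$, and degenerate $X$ inside the linear system $|\mathcal{I}_{C\cup\ell_0}(3)|$ of cubics containing $C\cup\ell_0$ to a cubic $X_0$ with a single ordinary node at $p_0$ and smooth elsewhere (one checks, using genericity of $C$, $\ell_0$, $p_0$, that the relevant sublinear system is nonempty of the expected dimension and that its general member has this property). A general pencil through $X_0$ and a smooth cubic containing $C$ yields a family $\mathcal{X}\to\Delta$ with $C\subset\mathcal{X}_t$ for all $t$, smooth fibres for $t\neq0$, and an associated vanishing $n$-sphere $\delta\in V_n(X_t,\Z)$. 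Because $\ell_0$ passes through the node of $X_0$, the point $[\ell_0]$ is a singular point of $S_{C,0}=q^{-1}(C)\subset P(X_0)$; a local computation of the normal bundle of $\ell_0$ at the node, of the type carried out in Proposition~\ref{prop cases of N_0} but now at a double point of $X_0$, should show that this singularity is an ordinary double point and that the nearby smooth surfaces $S_{C,t}$ carry a companion vanishing $(n-2)$-sphere $\delta'$. Clemens' tube-mapping lemma then identifies $\Psi_{C,t}(\delta')$ with $\pm\delta$ modulo classes coming from $G$ — heuristically the vanishing $n$-sphere is the join $S^{n-2}\ast S^1$ of the vanishing sphere $\delta'$ on $S_{C,t}$ with the circle direction of the $\PP^1$-fibre of $P_C\to S_C$, which is exactly what the cylinder homomorphism builds — and since $\Psi_{C,t}$ kills the $G$-classes upon passing to $V$, this gives $\delta\in\Psi_{C,t}\bigl(V_{n-2}(S_{C,t},\Z)\bigr)$.

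For step (b), as the pair $(X,C)$ varies over its moduli (irreducible by \cite{cs}, keeping $C$ smooth of degree $e$ and $S_C$ smooth) the groups $V_{n-2}(S_C,\Z)$ and $V_n(X,\Z)$ form local systems and $\Psi_C$ is a morphism between them, so its image is a sub-local system, in particular monodromy-stable. Translating $\delta$ along loops realises all the vanishing cycles attached to nodes $p_0$ on lines meeting $C$, and one argues — this being essentially the classical statement that the vanishing cycles of a Lefschetz degeneration generate the vanishing homology of a hypersurface over $\Z$, adapted to cubics constrained to contain $C$ — that these generate $V_n(X,\Z)$ as an abelian group. This forces the image of $\Psi_C$ to be all of $V_n(X,\Z)$.

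The main obstacle will be step (a): setting up the node degeneration compatibly with $C\cup\ell_0$ and, above all, carrying out the local analysis at the node to prove that $S_{C,0}\rightsquigarrow S_{C,t}$ near $[\ell_0]$ is a genuine Lefschetz degeneration (a single $A_1$ point) and making Clemens' tube-mapping identification precise enough to recover $\pm\delta$ rather than a finite-index multiple of it. A secondary point needing care in (b) is to check that cubics constrained to contain $C$ still admit enough nodal degenerations for their vanishing cycles to span $V_n(X,\Z)$ integrally. As an alternative to (b) one could first establish the case of $C$ a line, as in Izadi~\cite{izadi}, and then specialise a general degree-$e$ curve to a chain of lines using the irreducibility of the Hilbert scheme from \cite{cs}; but the node degeneration seems the more direct route.
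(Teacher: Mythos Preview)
Your strategy is in the right spirit --- this is indeed a Clemens--Letizia/Shimada argument --- but the paper's execution differs from yours in one decisive way that dissolves the two obstacles you flag.

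Instead of constraining the pencil to contain $C$, the paper takes a \emph{general} Lefschetz pencil $\{X_t\}_{t\in B}$ and deforms $C$ along it: using that the smooth locus of any nodal cubic carries free degree-$e$ rational curves (Lemma~\ref{lem rational curve in smooth locus}) together with the irreducibility of the Hilbert scheme from \cite{cs}, one chooses analytically a family $C_t\subset X_t^{sm}$ with $C_0=C$, so that $\mathcal{S}=\bigcup_t S_{C_t}$ sits as a smooth submanifold of $\mathcal{F}=\bigcup_t F(X_t)$ meeting each singular stratum $\Gamma_i$ transversally. Because the pencil is general, its vanishing cycles already generate $V_n(X,\Z)$ by classical Lefschetz theory, and your step (b) --- checking that a constrained pencil still produces enough vanishing cycles, or that monodromy over pairs surjects onto monodromy over cubics --- simply vanishes.

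For the identification $\Psi_C(\delta')=\pm\delta$ in your step (a), the paper does not use a tube/join picture. It proves instead (following \cite{shimada}) the intersection identity
\[
\fa\cdot\bigl([\sigma_{i1}]+\cdots+[\sigma_{il}]\bigr)=\Psi(\fa)\cdot[\Sigma_i]
\]
via explicit local coordinates adapted to the node (Lemma~\ref{local coordinates}), and then sets $\fa=[\sigma_{i1}]$: writing $\Psi([\sigma_{i1}])=\lambda[\Sigma_i]$ and using that vanishing spheres have self-intersection $\pm2$, one gets $\pm2\lambda=\pm2$, whence $\lambda=\pm1$. This replaces your heuristic by a two-line computation.

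Your route is probably salvageable --- the monodromy of pairs $(X,C)$ does surject onto that of cubics because the fibres are connected --- but it requires setting up the constrained degeneration, verifying its Lefschetz nature at $[\ell_0]$, and turning the join heuristic into an honest identity; the paper's device of moving $C$ rather than constraining $X$ sidesteps all three.
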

The idea of the proof is the Clemens-Letizia method, see
\cite{clemens} and \cite{letizia}. Our presentation closely follows
that of \cite[\S2,3]{shimada}. Let $\pi:V\rightarrow\Delta$ be a proper
flat holomorphic map from a complex manifold $V$ of dimension $m+1$
onto the unit disk $\Delta$. This map is called a
\textit{degeneration} if $\pi$ is smooth over the punctured disk
$\Delta^*=\Delta-0$ and $V_t:=\pi^{-1}(t)$ is irreducible for $t\neq
0$. Let $\mathrm{Sing}(V_0)$ denote the singular locus of $V_0$.

\begin{defn}(\cite[Definition 1]{shimada})
A degeneration $\pi:V\rightarrow \Delta$ is called \textit{quadratic
of codimension} $r$ if $\mathrm{Sing}(V_0)$ is connected and, for
every point $p\in\mathrm{Sing}(V_0)$, there exist local coordinates
$(z_0,\ldots,z_m)$ of $V$ around $p$ such that
$\pi=z_0^2+\cdots+z_r^2$.
\end{defn}

\begin{prop}\label{Lefschetz pencil}
For any smooth cubic hypersurface $X\subset\PP^{n+1}$, there exist a
Lefschetz pencil $X_t\subset\PP^{n+1}$ with $t\in B\cong\PP^1$, such
that

(1) The total space, $\mathcal{F} = \coprod_{t\in B} F(X_t)$, of
the associated Fano schemes of lines is smooth.

(2) $X_0=X$ is the cubic hypersurface we start with and
$X_t$ is smooth unless $t\in\{t_1,\ldots,t_N\}$.

(3) For each degeneration point $t_j\in B$,
$j=1,\ldots,N$, the family $\beta:\mathcal{F}\rightarrow B$ is a
quadratic degeneration of codimension $n-2$ at the point $t_j$.
\end{prop}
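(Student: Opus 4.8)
The plan is to take a generic pencil of cubic threefolds--sorry, cubic hypersurfaces--passing through $X$ and to verify the three assertions in turn; the first two are essentially classical, and the whole technical content sits in (3). Write $X=V(f)\subset\PP^{n+1}$, let $\PP^N=\PP(\mathrm H^0(\PP^{n+1},\calO(3)))$, and let $\mathcal D\subset\PP^N$ be the discriminant hypersurface of singular cubics, so that $[X]\notin\mathcal D$. For a generic cubic form $g$, take $B\cong\PP^1$ to be the pencil spanned by $f$ and $g$, with $X_0=X$ and $X_t=V(f+tg)$ on an affine chart of $B$. By the classical theory of Lefschetz pencils --- a generic line through the fixed point $[X]$ meets $\mathcal D$ transversally at finitely many smooth points of $\mathcal D$, since the locus of cubics with a worse-than-nodal singularity or with two or more nodes has codimension $\geq2$ in $\PP^N$, and the bad lines through $[X]$ form a proper subvariety; and the axis $X\cap V(g)$ is smooth for generic $g$ by Bertini --- this is precisely a Lefschetz pencil, giving (2): $X_t$ is smooth for $t$ outside a finite set $\{t_1,\dots,t_N\}\not\ni 0$, and $X_{t_j}$ has a single ordinary node $x_{0,j}$. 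By the same genericity I also arrange that each node is generic, in the sense that, writing $f$ in affine coordinates near $x_{0,j}$ as $q_j+c_j$ with $q_j$ the rank-$(n+1)$ quadratic part and $c_j$ the cubic part, the $(2,3)$ complete intersection $F_{x_{0,j}}(X_{t_j})=\{q_j=0\}\cap\{c_j=0\}\subset\PP^n$ is smooth; the failure locus is again a closed condition of codimension $\geq2$ in $\PP^N$ with nonempty complement, and for $n\geq3$ this complete intersection is then smooth, connected, of pure dimension $n-2$.

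For (1), the key object is the universal Fano scheme $\mathcal F_{\mathrm{univ}}=\{([\ell],[X'])\in G\times\PP^N:\ell\subset X'\}$, which is a projective subbundle of $G\times\PP^N$ over $G$ --- the fibre over $[\ell]$ is the $\PP^{N-4}$ of cubics vanishing on $\ell$ --- hence smooth and irreducible, and $\mathcal F$ is its restriction over the line $B\subset\PP^N$. Near $F(X)=\mathcal F_{\mathrm{univ}}|_{[X]}$, this fibre is smooth of dimension $2n-4$ (the Fano scheme of a smooth cubic; see \cite{cg}, \cite{ak}), and a family over a smooth curve whose special fibre is smooth is smooth along that fibre, so $\mathcal F$ is smooth along $F(X)$ for every choice of pencil through $[X]$. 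Away from $F(X)$, project from $[X]$ to get $\bar\beta:\mathcal F_{\mathrm{univ}}\setminus F(X)\to\PP^{N-1}$, whose fibre over $\xi$ is $\mathcal F_{\mathrm{univ}}|_{L_\xi}\setminus F(X)$ for the line $L_\xi$ through $[X]$ corresponding to $\xi$; since the source is smooth and we work over $\C$, generic smoothness makes this fibre smooth for $\xi$ in a dense open set. Choosing the pencil $B$ to correspond to such a $\xi$, compatibly with the finitely many dense open conditions of the construction, gives that $\mathcal F$ is smooth everywhere, proving (1).

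For (3), fix $j$ and write $X'=X_{t_j}$, $x_0=x_{0,j}$, $q=q_j$. First one checks $\mathrm{Sing}(F(X'))=F_{x_0}(X')$: a line $\ell$ missing $x_0$ lies in the smooth locus of $X'$, where it is of first or second type and $h^1(\mathscr N_{\ell/X'})=0$, so $F(X')$ is smooth at $[\ell]$ (cf. \cite{cg}), while every line through the node is a singular point of $F(X')$, by a local computation around $\ell$. By the construction $\mathrm{Sing}(F(X'))=F_{x_0}(X')$ is smooth, connected, of pure dimension $n-2$, so (using (1)) $F(X')=\beta^{-1}(t_j)$ is a fibre of the smooth $(2n-3)$-fold $\mathcal F$ whose singular locus is a smooth submanifold of codimension $(2n-4)-(n-2)=n-2$ inside it. The Morse--Bott lemma (the Morse lemma with the critical locus a submanifold) then yields the required local normal form $\beta=z_0^2+\dots+z_{n-2}^2$ near each point of $F_{x_0}(X')$ --- and hence a quadratic degeneration of codimension exactly $n-2$ --- precisely when the transverse Hessian of $\beta|_{\mathcal F}$ there has full rank $n-1$. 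So the proposition reduces to this rank statement.

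The transverse Hessian is computed by relating the family $\mathcal F\to B$ near $p=([\ell_0],t_j)$ to the family $\mathcal X\to B$ of cubics near the node. Because the pencil is Lefschetz and $\mathcal X$ is smooth, there are local coordinates $(w_1,\dots,w_{n+1})$ around $x_0$ and a reparametrization of $B$ near $t_j$ with $\beta_{\mathcal X}=w_1^2+\dots+w_{n+1}^2$, i.e. the standard Lefschetz degeneration of codimension $n$ at the node. Parametrizing deformations of $\ell_0$ inside $\mathcal X$ by $\mathrm H^0(\ell_0,\mathscr N_{\ell_0/\mathcal X})$ and restricting $\beta$, the second-order term is the quadratic form induced by $q$ on the relevant subspace, and one shows --- for a generic node, which is a further codimension-$\geq2$ open condition on the pencil --- that this form has corank exactly $n-2$ at every $[\ell_0]\in F_{x_0}(X')$; equivalently the same can be verified directly in Pl\"ucker-type coordinates on $G\times B$ using the rank-$4$ section of $\mathrm{pr}_G^*\Sym^3\mathscr E_2\otimes\mathrm{pr}_B^*\calO_B(1)$ cutting out $\mathcal F$, whose dependence on $B$ is affine-linear. \textbf{This rank-$(n-1)$ Hessian computation --- showing that the non-isolated quadratic singularity of $F(X')$ along $F_{x_0}(X')$ is non-degenerate in the transverse directions, with the correct corank --- is the main obstacle;} everything else is classical Lefschetz theory together with dimension counts. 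Granting it, (3) follows and the proposition is proved.
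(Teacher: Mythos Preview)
The paper does not give its own proof of this proposition: it simply remarks that the statement ``can be viewed as a special case of \cite[Proposition~1]{shimada}'' and then, citing \cite[Theorem~7.8]{cg}, describes the singular fibres $\mathcal F_{t_i}$ (irreducible, with singular locus $\Gamma_i=F_{x_i}(X_{t_i})$ an ordinary double subvariety of dimension $n-2$). So your attempt is already more ambitious than what the paper does.

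Your arguments for (1) and (2) are fine and essentially the standard ones. For (3), your reduction is correct: once you know $\mathrm{Sing}(F(X_{t_j}))=F_{x_{0,j}}(X_{t_j})$ is smooth and connected of dimension $n-2$, the Morse--Bott normal form follows from a transverse nondegeneracy statement. But you explicitly flag the rank-$(n-1)$ Hessian computation as ``the main obstacle'' and do not carry it out; what you write after that point is a sketch of a strategy (relate $\mathcal F\to B$ to $\mathcal X\to B$, invoke the Lefschetz local model at the node, argue via $\mathrm H^0(\ell_0,\mathscr N_{\ell_0/\mathcal X})$ or Pl\"ucker coordinates), not a computation. That step is genuinely the content of the proposition, and it is exactly what Shimada's \cite[Proposition~1]{shimada} supplies in the general hypersurface setting. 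As written, your proof has a real gap at precisely the point you identify; to close it you either need to do the local Hessian computation in earnest (e.g.\ via the defining section of $\Sym^3\mathscr E_2$ on $G\times B$ as you suggest, checking that at $[\ell_0]\in F_{x_0}$ the quadratic part of $\beta$ transverse to $F_{x_0}$ is nondegenerate), or, as the paper does, invoke Shimada.
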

This Proposition can be viewed as a special case of
\cite[Proposition 1]{shimada}. Note that we have no isolated
singularities in $F(X_t)$ since $F(X_t)$ is smooth as long as $X_t$
is smooth. By the results of \cite[Theorem 7.8]{cg}, we easily get a
description of the fibers of $\beta$. If $t\in B-\{t_1,\ldots,t_N\}$
then $\mathcal{F}_t=F(X_t)$ is smooth of dimension $2n-4$. Let $x_i$
be the ordinary double point of $X_{t_i}$ and $\Gamma_i$ be the
lines on $X_{t_i}$ that pass through $x_i$. We know that $\Gamma_i$
is smooth of dimension $n-2$. The singular fiber $\mathcal{F}_{t_i}$
is irreducible with $\Gamma_i$ being the singular locus which is an
ordinary double subvariety. Let $z\in\Gamma_i$ be a point and
$Y\subset \mathcal{F}$ be an $(n-1)$-dimensional complex submanifold
is a small neighborhood of $z$. If $Y$ meets $\Gamma_i$
transversally in the point $z$, then $z$ is a non-degenerate
critical point of $\beta|_Y$. There is an associated vanishing cycle
$\sigma_i\in\mathrm{H}_{n-2}(Y_{t_i+\epsilon},\Z)$.

The next observation we make is that a cubic hypersurface with an
isolated ordinary double point has enough rational curves in its
smooth locus. Let $X\subset\PP^{n+1}$ be a cubic hypersurface with
an ordinary double point $x_0\in X$. Then the projection from the
point $x_0$ defines a morphism $\tau:\tilde{X}\rightarrow \PP^n$,
where $\tilde{X}=\mathrm{Bl}_{x_0}(X)$ is the blow-up of $X$ at the
point $x_0$. Let $\sigma: \tilde{X}\rightarrow X$ be the blow-up
morphism. Then it is known that $\tau$ is the blow-up of $\PP^n$
along a smooth $(2,3)$-complete intersection $Z\subset\PP^n$; see
\cite[Lemma 6.5]{cg}. Let $Q\subset \PP^n$ be the unique quadric
hypersurface containing $Z$ and $\tilde{Q}\subset\tilde{X}$ be the
strict transform of $Q$. Then $\sigma$ is an isomorphism on
$\tilde{X}\backslash\tilde{Q}$ and it contracts $\tilde{Q}$ to the
singular point $x_0$. Let $h\in\Pic(X)$ be the class of a hyperplane
section and $H\in\Pic(\PP^n)$ the class of $\calO(1)$. Then on
$\tilde{X}$, we have
$$
\sigma^*h = 3\tau^*H-E.
$$
Let $C\subset X\backslash \{x_0\}$ be a degree $e$ rational curve
and $\tilde{C}=\sigma^{-1}C\subset\tilde{X}$. Then this curve
$\tilde{C}$ satisfies the following conditions
$$
 \tilde{C}\cdot (3\tau^* H-E)=e,\qquad \tilde{C}\cdot (2\tau^*
 H-E)=\tilde{C}\cdot\tilde{Q}=0.
$$
This implies that $\tilde{C}\cdot\tau^*H=e$ and $\tilde{C}\cdot
E=2e$. Hence $C'=\tau(\tilde{C})\subset \PP^n$ is a degree $e$
rational curve which meets $Z$ in $2e$ points and $\tilde{C}$ is the
strict transform of $C'$. Conversely, if we start with a degree $e$
rational curve $C'\subset\PP^n$ which meets $Z$ in $2e$ points. Let
$\tilde{C}$ be the strict transform of $C'$. Then
$C=\sigma(\tilde{C})\subset X\backslash\{x_0\}$ is a rational curve
of degree $e$.

\begin{lem}\label{lem rational curve in smooth locus}
Let $X\subset\PP^{n+1}$ be a cubic hypersurface with an isolated
ordinary double point $x_0\in X$. Then there exists a free rational
curve $C\subset X\backslash\{x_0\}$ of degree $e\geq 1$.
\end{lem}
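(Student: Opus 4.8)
The plan is to use the birational dictionary established just above the lemma: degree $e$ rational curves in $X\setminus\{x_0\}$ correspond, via $\sigma$ and $\tau$, to degree $e$ rational curves $C'\subset\PP^n$ meeting $Z$ transversally in exactly $2e$ points, these being all of $C'\cap Q$. So it suffices to produce one such $C'$ for which the resulting $C=\sigma(\tilde C)\subset X$ is free. I would first record that for a smooth rational curve $C\subset X$ freeness is equivalent to the global generation of $\mathscr{N}_{C/X}$: since $T_C\cong\calO(2)$ has nonnegative degree and the sequence $0\to T_C\to T_X|_C\to\mathscr{N}_{C/X}\to 0$ lives on $\PP^1$, a negative summand of $T_X|_C$ would have zero map into $\calO(2)$ and hence descend to a negative summand of $\mathscr{N}_{C/X}$; so $\mathscr{N}_{C/X}$ globally generated forces $T_X|_C$ globally generated.

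Next I would take $e=1$. Choose two general points $p,q\in Z$ and set $C'=\overline{pq}$. A general secant line of $Z$ is not contained in $Q$: through a general, hence smooth, point of $Q$ the lines lying in $Q$ sweep out only an $(n-3)$-dimensional family of directions, while the directions joining that point to the other points of $Z$ form an $(n-2)$-dimensional family, and $n-2>n-3$ for $n\geq 3$; equivalently, the secant lines of $Z$ contained in $Q$ form a family of dimension at most $2n-5$, strictly smaller than the $(2n-4)$-dimensional family of all secant lines. For such a $C'$ not in $Q$, the relation $C'\cdot Q=2$ forces $C'\cap Q=\{p,q\}$ with multiplicity one at each point, so $C'$ meets $Z$ transversally there and nowhere else; by the discussion preceding the lemma this yields a line $C\subset X\setminus\{x_0\}$. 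Conversely every line on $X$ avoiding $x_0$ arises this way from a $2$-secant of $Z$, so the construction identifies the general $2$-secant line of $Z$ with the general point of the irreducible $(2n-4)$-dimensional Fano scheme $F(X)$.

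It remains to see that this $C$ is free, i.e. of first type, and this is the main obstacle. The general line on a cubic hypersurface is of first type by the normal-bundle analysis of \cite[\S6--7]{cg}; I would argue the same for the (possibly singular) $X$ away from its node either by semicontinuity in a Lefschetz pencil having $X$ as a degenerate member — the first-type locus is open and dense in the irreducible smooth total space $\mathcal{F}$ of associated Fano schemes (Proposition \ref{Lefschetz pencil}), hence meets $F(X)=\mathcal{F}_{t_i}$ in a dense open set — or directly via the dimension bound $\dim F_0\leq n-2<2n-4$ on the second-type locus, the estimate of \cite[\S7]{cg} being local and applying equally at lines along which $X$ is smooth, while the remaining lines (those through $x_0$) also form only an $(n-2)$-dimensional family. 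A first-type line has $\mathscr{N}_{C/X}\cong\calO(1)^{n-3}\oplus\calO^2$, which is globally generated, so $C$ is free. The delicate point to pin down is exactly this last claim — that the general $2$-secant of $Z$ lands in the first-type locus of $F(X)$ despite the node, equivalently that the second-type lines still form a locus of the expected small dimension — together with the bookkeeping of transversality guaranteeing $\tilde C\cap\tilde Q=\emptyset$, so that $C$ genuinely avoids $x_0$.
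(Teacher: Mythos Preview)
Your approach differs from the paper's in the key step of establishing freeness. You produce a line on $X\setminus\{x_0\}$ from a general secant of $Z$ (correct, and with essentially the same dictionary), but then try to argue directly that it is of first type via semicontinuity in a pencil or by localizing the dimension bound of \cite[\S7]{cg} to the smooth locus---and you rightly flag this as ``the delicate point to pin down.'' Neither route is obviously wrong, but neither is immediate: semicontinuity of fibre dimension goes the wrong way under specialization, and the locality of the \cite{cg} bound would need to be checked.

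The paper sidesteps this entirely by a different choice of starting data. Rather than two general points of $Z$, it starts from a general point $y\in\PP^n$ (equivalently a general point of $X$) and shows that some secant line of $Z$ passes through $y$. The argument is a neat canonical-class contradiction: if no such secant existed, projection from $y$ would embed $Z$ into $\PP^{n-1}$ as a degree-$6$ hypersurface, forcing $-K_Z=(n-6)H$; but $Z\subset\PP^n$ is a $(2,3)$ complete intersection, so $-K_Z=(n-4)H$. The resulting line on $X$ then passes through a general point of $X$ and is therefore automatically free by \cite[II, Theorem 3.11]{kollar}---no normal-bundle analysis is needed. This is exactly what your approach buys the hard way.

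You should also note that the lemma is invoked for every $e\geq 1$ (it is applied at $e=\deg C$ for the fixed curve $C$ in the Prym--Tjurin setup), not just $e=1$; the paper handles $e>1$ by smoothing a chain of $e$ free lines in $X\setminus\{x_0\}$ via \cite[II.7]{kollar}.
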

\begin{proof}
We only need to show that there is a rational curve of degree $e$
through a general point of $X\backslash\{x_0\}$ and contained in
$X\backslash\{x_0\}$. We first do this for $e=1$. In this case, we
see from the above discussion that we only to show the following

\textsl{Claim}: Let $y\in\PP^n$ be a general point, then there is a
line $l'$ on $\PP^n$ that meets $Z$ in two points.

To prove the claim, we consider the projection,
$pr_y:\PP^n\backslash\{y\}\rightarrow\PP^{n-1}$, from the point $y$.
If there is no such line $l'$, then $pr_y|_Z:Z\hookrightarrow
\PP^{n-1}$ is a closed immersion. But this would imply that $Z$ is a
degree $6$ hypersurface in $\PP^{n-1}$ and hence $-K_Z=(n-6)H$ where
$H$ is the class of a hyperplane section of $Z$. Since
$Z\subset\PP^{n}$ is a $(2,3)$-complete intersection, we also know
that $-K_Z=(n+1-2-3)H=(n-4)H$, which is a contradiction.

For case of $e>1$, we can take a chain of $e$ lines on
$X\backslash\{x_0\}$ and smooth to a degree $e$ rational curve; see
\cite[\S II.7]{kollar}. Such a rational curve passes through a
general point since a line does.
\end{proof}

\begin{lem}
Under the situation of Proposition \ref{Lefschetz pencil}, there
exists a contractible analytic open neighborhood $D$ of $0\in B$
containing $\{t_1,\ldots,t_N\}$ such that

(i) There is an analytic family of curves $\{C_t\subset
X_t^{sm}:t\in D\}$ with $C_0=C$, where $X_t^{sm}$ is the smooth locus of $X_t$.

(ii) $\mathcal{S}=\cup_{t\in D}S_{C_t}\subset\mathcal{F}$ is a complex
manifold and $\rho:\mathcal{S}\rightarrow B$ is smooth away from the $t_i$'s;

(iii) As sub-manifolds of $\mathcal{F}$, $\mathcal{S}$ meets
each $\Gamma_i$ transversally at finitely many points
$z_{i1},\ldots,z_{il}$.
\end{lem}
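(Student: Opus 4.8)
The plan is to build $\{C_t\}$ by deforming $C$ along the pencil, to realise $\mathcal{S}$ as the relative incidence variety $q^{-1}(\mathcal{C})$, and to push all of the smoothness and transversality onto a general position argument concentrated at the $N$ singular fibres. Write $\mathcal{X}=\coprod_{t\in B}X_t\to B$ for the total space of the pencil and $q\colon\mathcal{P}\to\mathcal{X}$ for the relative universal line (the family version of $q\colon P(X)\to X$), with $p\colon\mathcal{P}\to\mathcal{F}$. Since $C=C_0$ is general, hence free, on $X=X_0$, we have $\mathrm{H}^1(C,\mathscr{N}_{C/X})=0$; as $\mathcal{X}$ is smooth along $C\subset X_0$, the sequence $0\to\mathscr{N}_{C/X}\to\mathscr{N}_{C/\mathcal{X}}\to\calO_C\to 0$ gives $\mathrm{H}^1(C,\mathscr{N}_{C/\mathcal{X}})=0$ and surjectivity of $\mathrm{H}^0(\mathscr{N}_{C/\mathcal{X}})\to\mathrm{H}^0(\calO_C)=T_{B,0}$. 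Hence the relative moduli space of degree $e$ rational curves in the fibres is smooth over $B$ at $[C]$, and the same computation holds at any free rational curve lying in a smooth locus $X_t^{sm}$; by Lemma~\ref{lem rational curve in smooth locus} such curves exist in $X_{t_i}^{sm}$ for every $i$, so the open locus $\mathcal{M}$ of free degree $e$ rational curves contained in the smooth loci of the fibres maps smoothly and surjectively onto $B$. Choosing a contractible Stein $D\subset B\cong\PP^1$ containing $0$ and all $t_i$, one obtains a holomorphic section $t\mapsto [C_t]$ of $\mathcal{M}\to B$ over $D$ with $C_0=C$; this is (i). Since the bundle $s^*T_{\mathcal{M}/B}$ is trivial on the contractible Stein $D$, first-order deformations of such a section fixing $C_0$ evaluate onto each $T_{[C_{t_i}]}\mathcal{M}_{t_i}$, and after integrating we may arrange that every $C_{t_i}$ is as general as we wish inside $X_{t_i}^{sm}$; this freedom is all we shall use about the $C_{t_i}$.

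Next put $\mathcal{C}=\bigcup_{t\in D}C_t\subset\mathcal{X}$, so that $\mathcal{S}=q^{-1}(\mathcal{C})$, identified near the loci of interest with its image in $\mathcal{F}$ under $p$ (an embedding there, as the relevant lines are not secants of the $C_t$). Over the complement of $\mathrm{Sing}(\mathcal{X})=\{x_1,\dots,x_N\}$ and of the finitely many Eckardt-type points of the fibres, $q$ is flat with smooth $(n-3)$-dimensional fibres; since $\mathcal{C}$ is, for a general choice of family, a smooth surface avoiding all these loci, $\mathcal{S}=q^{-1}(\mathcal{C})$ is there smooth of dimension $n-1$, while for $t\notin\{t_1,\dots,t_N\}$ the fibre $S_{C_t}$ is smooth of dimension $n-2$ by Lemma~\ref{lem smoothness of Sc}, so $\rho=\beta|_{\mathcal{S}}$ is a submersion; the same holds over $t_i$ at points of $\mathcal{S}$ lying off $\Gamma_i$, where $X_{t_i}$ and $F(X_{t_i})$ are smooth near the line in question. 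What remains, both the manifold property and the form of $\rho$, is concentrated at $\mathcal{S}\cap\Gamma_i$, where $q$ has a fibre through the node that is singular at the line (the cubic equation being singular at $x_i$), so the naive argument fails there and the outcome is bound up with (iii).

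The remaining point, which I expect to be the main obstacle, is the local analysis at $\Gamma_i$. The points of $\mathcal{S}\cap\Gamma_i$ are the lines $l\subset X_{t_i}$ with $x_i\in l$ and $l\cap C_{t_i}\neq\emptyset$; since $x_i\notin C_{t_i}$, sending such an $l$ to $y=l\cap C_{t_i}$ identifies $\mathcal{S}\cap\Gamma_i$, as a scheme near these points, with $C_{t_i}\cap D_{x_i}$, where $D_{x_i}\subset X_{t_i}$ is the cone of dimension $n-1$ over $\Gamma_i$ with vertex $x_i$; as $C_{t_i}\not\subset D_{x_i}$ this is a finite set $z_{i1},\dots,z_{il}$ lying over general points of $C_{t_i}$. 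By Proposition~\ref{Lefschetz pencil}, near any point of $\Gamma_i$ there are coordinates $(w_1,\dots,w_{n-2},z_0,\dots,z_{n-2})$ on $\mathcal{F}$ with $\beta=z_0^2+\cdots+z_{n-2}^2$ and $\Gamma_i=\{z=0\}$; the claim to establish is that if $C_{t_i}$ meets $D_{x_i}$ transversally at each $z_{ij}$---a generic condition, arranged by the freedom above---then $\mathcal{S}$ is smooth of dimension $n-1$ near each $z_{ij}$ and is a graph over the $z$-coordinates, hence meets $\Gamma_i$ transversally in $\mathcal{F}$, and $\rho=\beta|_{\mathcal{S}}$ then has a non-degenerate critical point there with quadratic part $z_0^2+\cdots+z_{n-2}^2$. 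This gives (ii) near $\Gamma_i$ together with (iii), and is the analogue---with the auxiliary curve in place of a linear section---of the corresponding step in \cite[\S2--3]{shimada}. The work is entirely in transporting the transversality of $C_{t_i}$ with $D_{x_i}$ to $\mathcal{S}$ through $q$ and controlling the incidence variety at a line through the node, where $\mathcal{F}$ itself acquires its quadratic singularity; by contrast, the construction of the family, the existence of the section over $D$, and the general position away from the $\Gamma_i$ are routine. It is precisely this delicate node behaviour that makes the Clemens-Letizia machinery applicable.
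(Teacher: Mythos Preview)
Your outline matches the paper's strategy closely: construct the section $t\mapsto C_t$ from freeness together with Lemma~\ref{lem rational curve in smooth locus}, realise $\mathcal{S}$ as $q^{-1}(\mathcal{C})$, and isolate the difficulty at the points $z_{ij}$ over the nodes. Two points of divergence are worth noting.

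First, for the existence of the section over all of $D$, you rely on smoothness of $\mathcal{M}\to B$ together with surjectivity. But smoothness does not by itself guarantee that the connected component of $\mathcal{M}$ containing $[C]$ surjects onto $B$ (the open free locus is not proper over $B$). The paper closes this by invoking the irreducibility of the space of degree $e$ rational curves on a smooth cubic \cite[Theorem~1.1]{cs}: the free curve $C_i\subset X_{t_i}^{sm}$ furnished by Lemma~\ref{lem rational curve in smooth locus} deforms to nearby smooth fibres, and irreducibility forces it into the same component $\mathcal{H}$ as $C$. You should name this input.

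Second, and more substantially, the paper handles the smoothness of $\mathcal{S}$ at $z_{ij}$ not via your proposed local-coordinate/graph argument, but by a direct tangent-space count. Writing $L$ for the line at $z_{ij}$, $y=L\cap C_{t_i}$, and $v\in\mathrm{H}^0(C_{t_i},\mathscr{N}_{C_{t_i}/\PP^{n+1}})$ for the first-order deformation corresponding to $\partial/\partial t$, the paper identifies
\[
\mathcal{T}_{\mathcal{S},z_{ij}}\;=\;\bigl\{\,v'\in\mathcal{T}_{\mathcal{F},z_{ij}}\subset\mathrm{H}^0(L,\mathscr{N}_{L/\PP^{n+1}})\;:\;v'(y)\in\bar V_y\,\bigr\},
\]
where $V_y\subset\mathcal{T}_{\PP^{n+1},y}$ is the $2$-plane spanned by $\mathcal{T}_{C_{t_i},y}$ and a lift of $v(y)$, and $\bar V_y$ is its image in $\mathscr{N}_{L/\PP^{n+1},y}$. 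For generic $v(y)$ this is a codimension-$(n-2)$ condition, giving $\dim\mathcal{T}_{\mathcal{S},z_{ij}}=(2n-3)-(n-2)=n-1=\dim\mathcal{S}$, hence smoothness; transversality with $\Gamma_i$ is then dispatched by genericity of $s$. The point to flag is that the operative generic hypothesis is on $v(y)$, i.e.\ on how the \emph{family} $C_t$ moves through $t_i$, not merely on $C_{t_i}$ itself. Your sketch packages the condition as transversality of $C_{t_i}$ with $D_{x_i}$ inside $X_{t_i}$; that condition alone constrains only the $t=t_i$ slice and does not obviously control the normal direction of $\mathcal{S}$ in $\mathcal{F}$, so it is not clear it suffices for your graph claim without bringing in the $\partial/\partial t$ direction. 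The paper's computation sidesteps this by working directly with $v$.
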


\begin{proof}
Let $\mathcal{H}/B$ be the component of relative Hilbert scheme of
rational curves on $X_t$'s that contains $C$ as an element. This is
well-defined since $C$ defines a smooth point in the relative
Hilbert scheme. We claim that, after shrinking $D$, there exists a
general analytic section $s:t\mapsto C_t\in\mathcal{H}$ satisfies
$C_t\subset X_t^{sm}$. To prove this we first note that by Lemma
\ref{lem rational curve in smooth locus}, there is a free rational
curve $C_i\in X_{t_i}^{sm}$ of the same degree $e$. Since $C_i$ is
free, it deforms to a curve $C_t$ in nearby fibers. By the result of
\cite[Theorem 1.1]{cs}, the space of degree $e$ rational curves on a smooth cubic
hypersurface is irreducible. This implies that $C_t$ is a point of
$\mathcal{H}$ and hence so is $C_i$. So we can choose a family
$s:t\mapsto C_t$ whose specializations does not meet the points
$x_i\in X_{t_i}$. This proves (i). We also get (iii) easily by
choosing $s$ general enough. The smoothness of $\mathcal{S}$ follows
from a deformation argument. We only need to show that $\mathcal{S}$
is smooth at the points $z=z_{ij}$. Let $L$ be the line corresponds
to $z$ and $C_i=C_{t_i}$. Let
$v\in\mathrm{H}^0(C_i,\mathscr{N}_{C_i/\PP^{n+1}})$ be the section
corresponding to $\frac{\partial}{\partial t}$ at the point $C_i$.
Let $y=C_i\cap L$. Then
$v(y)\in\mathscr{N}_{C_i/X,y}=\frac{\mathcal{T}_{X,y}}{\mathcal{T}_{C_i,y}}$
determines a 2-dimensional subspace $V_y\subset \mathcal{T}_{X,y}$.
Then $\mathcal{T}_{\mathcal{S},z}\subset
\mathcal{T}_{\mathcal{F},z}$ is naturally given by all sections
$v'\in \mathcal{T}_{\mathcal{F},z}
\subset\mathrm{H}^0(L,\mathscr{N}_{L/\PP^{n+1}})$ with
$v'(y)\in\bar{V}_y$, where $\bar{V}_y$ is the image of $V_y$ in
$\frac{\mathcal{T}_{X,y}}{\mathcal{T}_{L,y}}=\mathscr{N}_{L/\PP^{n+1},y}$.
When $v(y)$ is general, the above condition gives a codimension
$n-2$ subspace of $\mathcal{T}_{\mathcal{F},z}$, i.e.
$\dim\mathcal{S}=\dim \mathcal{T}_{\mathcal{S},z}$. This proves
(ii).
\end{proof}

\begin{rmk}
From the above proof, we see that the family $t\mapsto C_t$ can be
made algebraic on some finite cover of $B=\PP^1$. The points
$z_{ij}$ are exactly the critical points of $\rho$ and all of them
are non-degenerate.
\end{rmk}

Now we fix a small $\epsilon$ and let $B_{i}$ be the closed ball of
radius $\epsilon$ with center $t_i$. If $\epsilon$ is small enough,
we have $B_i\subset D$. Pick a path $l_i$ connecting $0$ and
$t_i+\epsilon$ such that $\cup_il_i$ is star-shaped and
contractible. Let $D_i\subset\PP^{n+1}$ be a small open ball
centered at the double point $x_i\in X_{t_i}$. Let
$\tilde{D}_i\subset \mathcal{F}$ be the set of lines
$L\in\mathcal{F}$ such that $L$ meets $D_i$. Thus $\tilde{D}_i$ is a
small open neighborhood of $\Gamma_i$. By construction,
$$
\mathcal{S}\cap \tilde{D}_i=U_1\cup\cdots\cup U_l
$$
where $U_j$ are disjoint open balls in $\mathcal{S}$. Let
$p_j:P_j\rightarrow U_j$ be the family of lines parameterized by
$U_j$. Hence we get the following commutative diagram
$$
\xymatrix{
 P_j\ar[r]^{q_j}\ar[d]_{p_j} &Y\ar[d]^{\pi}\\
 U_j\ar[r]^{\rho_j} &B
}
$$
where $Y$ is the blow up of $\PP^{n+1}$ along the base locus of the
Lefschetz pencil and $\rho_j=\rho|_{U_j}$. We fix a general analytic
section $s_j:U_j\rightarrow P_j$.

\begin{lem}\label{local coordinates}(\cite[Lemma 7]{shimada})
There exists local analytic coordinates $u_0,u_1,\ldots,u_n$ of
$\PP^{n+1}$ at the point $x_j$ such that\\
\indent (i) The image of $P_j$ in $\PP^{n+1}$ is locally given by
$u_0+\sqrt{-1}u_1=0$;\\
\indent (ii) The image $q_j\circ s_j(U_j)$ is given by
$u_0=u_1=0$;\\
\indent (iii) $\pi=t_j+u_0^2+u_1^2+\cdots+u_n^2$.
\end{lem}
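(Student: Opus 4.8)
The plan is to establish the three normal forms one at a time, each time using only coordinate changes that preserve the ones already obtained. First I would invoke that in a Lefschetz pencil the singular member $X_{t_j}$ carries a single critical point $x_j$ of $\pi$, nondegenerate, which for a general pencil we may take disjoint from the base locus; then a neighbourhood of $x_j$ in $Y$ is biholomorphic to one in $\PP^{n+1}$, and $\pi$ becomes the holomorphic function assigning to a point the parameter of the unique member of the pencil through it. The holomorphic Morse lemma then supplies analytic coordinates $(w_0,\dots,w_n)$ centred at $x_j$ with $\pi=t_j+w_0^2+\cdots+w_n^2$; in particular the germ of $X_{t_j}$ at $x_j$ is the affine quadric cone $\{\sum_k w_k^2=0\}$. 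This is (iii); the remaining task is to refine the coordinates, \emph{keeping this form of $\pi$}, so as to get (i) and (ii).

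Next I would compute the relevant tangent spaces. Let $z_{ij}\in U_j$ be the unique critical point of $\rho_j$ — equivalently the unique point of $U_j$ whose line passes through $x_j$ — and let $L=L_{z_{ij}}$ be that line. Since $L\subset X_{t_j}$ and $L$ passes through $x_j$, its direction $\xi$ at $x_j$ lies on the cone, i.e. $\langle\xi,\xi\rangle=0$, where $\langle a,b\rangle=\sum_k a_kb_k$ is the bilinear form of $\sum_k w_k^2$. For $u$ near $z_{ij}$ the line $L_u$ lies on the single member $X_{\rho_j(u)}$, so $\pi$ is constant along $L_u$; writing $L_u$ near $x_j$ as $w(u)+s\,\eta(u)$ with $w(z_{ij})=0$ and $\eta(z_{ij})=\xi$, expanding $\pi(w(u)+s\eta(u))$ forces $\langle\eta(u),\eta(u)\rangle=0$ and $\langle w(u),\eta(u)\rangle=0$, and differentiating the second relation at $z_{ij}$ gives $\langle\partial_u w,\xi\rangle=0$. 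Since, for a general pencil and section $s_j$, the image hypersurface $W:=\overline{q_j(P_j)}=\bigcup_u L_u$ is smooth at $x_j$, its tangent space is spanned by $\xi$ and the vectors $\partial_u w(z_{ij})$, all lying in $\xi^{\perp}$; hence $T_{x_j}W=\xi^{\perp}$, an isotropic hyperplane whose radical for $\langle\,,\,\rangle$ is $\C\xi$. Similarly $q_j\circ s_j$ identifies $U_j$ with $\Sigma:=q_j\circ s_j(U_j)\subset W$ and carries $\pi|_{\Sigma}$ to $\rho_j$, which has a nondegenerate critical point at $z_{ij}$; hence the Hessian of $\pi|_\Sigma$ at $x_j$, which is $\langle\,,\,\rangle|_{T_{x_j}\Sigma}$, is nondegenerate, so $T_{x_j}\Sigma$ is a complement to $\C\xi$ inside $\xi^{\perp}$. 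Since the complex orthogonal group acts transitively on isotropic hyperplanes and, via the stabiliser of a given one, on nondegenerate complements of its radical, a linear change of the $w$'s preserving $\sum_k w_k^2$ puts $T_{x_j}W=\{w_0+\sqrt{-1}\,w_1=0\}$ and $T_{x_j}\Sigma=\{w_0=w_1=0\}$.

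The remaining and most delicate step is to promote these first-order statements to the exact linear normal forms in (i) and (ii) while keeping $\pi$ standard; this is the content of \cite[Lemma 7]{shimada}, and I would argue as follows. Parametrise $W$ near $x_j$ by its ruling, $\Sigma\times\C\to W$, $(p,s)\mapsto p+s\,\eta(p)$, with $\eta(p)$ the direction of the ruling line through $p$; as $\langle p,\eta(p)\rangle=0$ and $\langle\eta(p),\eta(p)\rangle=0$, one gets $\pi(p+s\eta(p))=\pi|_{\Sigma}(p)$, so on $W$ the function $\pi$ is the pullback of $\pi|_{\Sigma}$ along the ruling projection. Applying the Morse lemma on $\Sigma$ yields coordinates $(u_2,\dots,u_n)$ on $\Sigma$ with $\pi|_{\Sigma}=t_j+u_2^2+\cdots+u_n^2$; pulling these back along the ruling, taking $u_1$ to be a normalisation of the ruling parameter $s$ and $u_0=-\sqrt{-1}\,u_1$ on $W$, one has $W=\{u_0+\sqrt{-1}\,u_1=0\}$ and $\Sigma=\{u_0=u_1=0\}$ inside $W$. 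It then remains to extend $(u_0,u_1)$ off $W$: since $\pi-t_j-(u_2^2+\cdots+u_n^2)$ vanishes along $W$, it factors as $v\cdot v'$ with $v$ a defining function of $W$, and after rescaling $v$ one may set $u_0=\tfrac12(v+v')$, $u_1=\tfrac1{2\sqrt{-1}}(v-v')$ to obtain coordinates $(u_0,\dots,u_n)$ near $x_j$ in $\PP^{n+1}$ with $\pi=t_j+\sum_k u_k^2$, $W=\{u_0+\sqrt{-1}\,u_1=0\}$ and $\Sigma=\{u_0=u_1=0\}$. The main obstacle is exactly this last extension — a parametrised, relative version of the holomorphic Morse lemma realising all three normal forms simultaneously — which I would import from \cite[Lemma 7]{shimada}; the only point requiring verification for the present geometry is the tangent computation above, showing that the image hypersurface $W$ and the section locus $\Sigma$ lie in $Y$ in exactly the position needed to apply that lemma.
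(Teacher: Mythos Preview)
Your argument is correct but follows a different route from the paper's. The paper's proof is extremely brief: it verifies only that $q_j$ has injective tangent map at the point over $x_j$ --- a nonzero $v\in\mathcal{T}_{U_j,z_{ij}}$ corresponds to a section of $\mathscr{N}_{L_j/\PP^{n+1}}$ which, by the transversality already arranged for the family $C_t$, does not vanish at $x_i$ --- and then says one can start with local coordinates on $U_j$, extend to $P_j$, and then to $\PP^{n+1}$, referring to \cite[Lemma 7]{shimada} for the details. So the paper builds the coordinates \emph{inside-out}, from $\Sigma$ outward, whereas you build \emph{outside-in}, beginning with the holomorphic Morse normal form for $\pi$ on the ambient space and then refining.

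Your version is more explicit about the intermediate geometry: the computation of $T_{x_j}W\subset\xi^{\perp}$ via the isotropy relations, and the identification of $T_{x_j}\Sigma$ as a nondegenerate complement to $\C\xi$, make the positioning relative to the quadratic form completely transparent. One small remark: you assert smoothness of $W$ at $x_j$ as a genericity condition, but this is precisely the injectivity of $dq_j$ that the paper chooses to prove, and it in fact already follows from your own Hessian argument --- the nondegeneracy of the critical point of $\rho_j=\pi\circ(q_j\circ s_j)$ forces $q_j\circ s_j$ to be an immersion at $z_{ij}$, so $T_{x_j}\Sigma$ has dimension $n-1$, misses $\C\xi$, and together with $\xi$ fills up all of $\xi^{\perp}$, whence $T_{x_j}W=\xi^{\perp}$. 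Both approaches ultimately defer the final coordinate extension to Shimada; your route has the advantage of exhibiting the geometry, the paper's of being shorter once one is willing to import Shimada wholesale.
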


\begin{proof}
We first show that $q_j$ has injective tangent map. For any tangent
vector $v\in \mathcal{T}_{U_j,z_j}$, it corresponds to a global
section of $\mathscr{N}_{L_j/\PP^{n+1}}$, where $L_j$ is the line
corresponding to $z_j$. By the choice of $C_t$, we know that
$C_{t_i}$ meets the divisor swept out by lines through $x_i$
transversally. Then $v$ does not vanish at the point $x_i\in L_j$.
This means that the point $x_i\in L_j$ actually moves when we move
$L_j$ in $U_j$. Hence we get the injectivity of the tangent map of
$q_j$. Then we can start with local coordinates on $U_j$ and extend
to $P_j$ and then to $\PP^{n+1}$. See the proof of \cite[Lemma
7]{shimada} for more details.
\end{proof}

Since we are doing local computations, by abuse of notation, we
regard $P_j$ as a submanifold of $\PP^{n+1}$. Using the local
coordinates obtained in Lemma \ref{local coordinates}, it is
standard that a vanishing cycle associated to $x_i$ is given by
$$
\Sigma_{ij}=\{u_0^2+\cdots+u_n^2=\epsilon, \quad u_k\in\R\}
$$
See \cite{lamotke} for more details. Note that the index $j$ is used
to remember that the local coordinates are chosen with respect to
$P_j$ as in Lemma \ref{local coordinates}. If we fix an orientation
of $\Sigma_{ij}$ and this gives an element
$[\Sigma_{ij}]\in\mathrm{H}_n(X_{t_i+\epsilon},\Z)$. Also, by Lemma
\ref{local coordinates}, we know that $\sigma_{ij}=(q_j\circ
s_j)^{-1}\Sigma_{ij}$, with the induced orientation, gives a
vanishing cycle for the critical point $z_{ij}\in \mathcal{S}$. Let
$$
[\sigma_{ij}]\in \mathrm{H}_{n-2}(\mathcal{S}_{t_i+\epsilon},\Z)
$$
be the corresponding homology class. We choose the orientation of
$\Sigma_{ij}$ such that $[\Sigma_{ij}]=[\Sigma_i]$ is a fixed class.
Then $\sigma_{ij}$ has an induced orientation.

\begin{prop}(\cite[Proposition 2]{shimada}) For any
$\fa\in\mathrm{H}_{n-2}(\mathcal{S}_{t_i+\epsilon},\Z)$, we have
$$
\fa\cdot([\sigma_{i1}]+\cdots+[\sigma_{il}])=\Psi(\fa)\cdot[\Sigma_i]
$$
where $\Psi=\Psi_{C_{t_i+\epsilon}}$.
\end{prop}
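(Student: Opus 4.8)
The plan is to use the adjointness between the cylinder map $\Psi$ and the Abel--Jacobi map, thereby reducing the statement to a local computation near the degeneration. Write $\Phi=\Phi_{C_{t_i+\epsilon}}$ for the Abel--Jacobi homomorphism. Since $\Phi$ and $\Psi$ are induced by the incidence correspondence $[P_{C_{t_i+\epsilon}}]\subset S_{C_{t_i+\epsilon}}\times X_{t_i+\epsilon}$ and its transpose, they are adjoint for the intersection pairings, so $\Psi(\fa)\cdot[\Sigma_i]=\fa\cdot\Phi([\Sigma_i])$. Hence it suffices to prove the identity $\Phi([\Sigma_i])=\sum_{j=1}^{l}[\sigma_{ij}]$ in $\mathrm{H}_{n-2}(\mathcal{S}_{t_i+\epsilon},\Z)$.

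To prove this identity I would first observe that $\Phi([\Sigma_i])$ is represented by the locus of lines in $\mathcal{S}_{t_i+\epsilon}$ meeting a cycle $\Theta_i$ representing $[\Sigma_i]$; taking $\Theta_i$ supported in a small ball $D_i$ around the ordinary double point $x_i$ of $X_{t_i}$ (a small generic perturbation of the vanishing sphere), a line can meet $\Theta_i$ only if its point lies in $\tilde{D}_i\cap\mathcal{S}=U_1\cup\cdots\cup U_l$, so $\Phi([\Sigma_i])$ is supported on the union of the balls $U_j\cap\mathcal{S}_{t_i+\epsilon}$, which shrink to the critical points $z_{ij}$ of $\rho$ as $\epsilon\to 0$. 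The class $\Phi([\Sigma_i])$ near $z_{ij}$ can then be computed in the local coordinates $(u_0,\dots,u_n)$ of Lemma \ref{local coordinates} adapted to $P_j$: there $X_{t_i+\epsilon}=\{u_0^2+\cdots+u_n^2=\epsilon\}$, the image of $P_j$ is $\{u_0+\sqrt{-1}u_1=0\}$, the section $q_j\circ s_j$ has image $\{u_0=u_1=0\}$, the line over a point $z\in U_j$ is $\{u_0=-\sqrt{-1}u_1,\ (u_2,\dots,u_n)=(u_2(z),\dots,u_n(z))\}$, and $\Sigma_{ij}=\{u_k\in\R,\ \sum u_k^2=\epsilon\}$ is a legitimate local representative of $[\Sigma_i]$. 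Such a line meets $\Sigma_{ij}$ precisely when the reality conditions force $u_0=u_1=0$ and $(u_2(z),\dots,u_n(z))$ lies on the real $(n-2)$-sphere $u_2^2+\cdots+u_n^2=\epsilon$, i.e. precisely when $z\in\sigma_{ij}$, and this incidence is transverse; hence near $z_{ij}$ the locus of lines meeting $\Theta_i$ is $\sigma_{ij}$, and summing over $j$ gives $\Phi([\Sigma_i])=\sum_j\pm[\sigma_{ij}]$.

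The remaining point, and the main obstacle, is to show that every sign is $+1$ with the orientations of $\Sigma_{ij}$ and $\sigma_{ij}$ fixed in the discussion preceding the proposition; equivalently, that under the incidence the orientation of the vanishing sphere $\Sigma_{ij}$ of the quadratic degeneration $\pi$ at $x_i$ is carried to the chosen orientation of the vanishing sphere $\sigma_{ij}$ of $\rho=\pi|_{\mathcal{S}}$ at $z_{ij}$, with no intervening sign from the complex \emph{line direction} $\{u_0=-\sqrt{-1}u_1\}$ in the $(u_0,u_1)$-plane. Since Lemma \ref{local coordinates} realizes $p_j$, $q_j$ and $\pi$ simultaneously in the standard local normal form, this is exactly the orientation computation carried out in \cite{letizia} and \cite[\S2]{shimada}, so once the conventions there are matched with ours the sign is uniformly $+1$; combined with the adjointness identity of the first paragraph this proves the proposition, which is \cite[Proposition 2]{shimada} in our set-up.
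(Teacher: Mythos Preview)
Your argument is correct and arrives at the same conclusion as the paper's, but the route is organized differently. The paper does not pass through the adjointness identity $\Psi(\fa)\cdot[\Sigma_i]=\fa\cdot\Phi([\Sigma_i])$ nor attempt to prove the class-level equality $\Phi([\Sigma_i])=\sum_j[\sigma_{ij}]$. Instead it argues pointwise: pick a topological cycle representing $\fa$, observe that for $L\notin\bigcup_j\fa_j$ (where $\fa_j=\fa\cap U_j$) one has $L\cap\Sigma_i=\emptyset$, and then show that each transverse intersection point $a_k$ of $\fa_j$ with $\sigma_{ij}$ gives exactly the transverse intersection point $s_j(a_k)$ of $\bigcup_{L\in\fa_j}L$ with $\Sigma_{ij}$, with matching orientation signs. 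Both arguments rest on the same local model furnished by Lemma~\ref{local coordinates} and the same orientation chase; your version has the modest advantage of isolating a cleaner intermediate statement ($\Phi([\Sigma_i])=\sum_j[\sigma_{ij}]$), while the paper's version avoids having to justify that $\Phi$ applied to a real cycle is literally ``the locus of lines meeting it'' and simply counts points on both sides. One small caution: your description of the individual line over $z$ in the coordinates $(u_0,\dots,u_n)$ is not needed and may not be literally accurate; what you actually use (and what suffices) is that $q_j(P_j)\cap\Sigma_{ij}$ lands in the section $\{u_0=u_1=0\}$, so its $p_j$-image is exactly $\sigma_{ij}$.
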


\begin{proof}
The proof goes the same as that of \cite[Proposition 2]{shimada}; we
sketch it here for completeness. We still use $\fa$ to denote a
topological cycle that represents the class $\fa$. We use the
notations above and set $\fa_j=\fa\cap U_j$. If
$L\in\fa-\cup_j\fa_j$, then $L\cap\Sigma_i=0$. We may assume that
$\fa_j$ meets $\sigma_{ij}$ transversally at $\mu$ points
$a_1,\ldots,a_\mu\in\sigma_{ij}$. Then the intersection of
$\cup_{L\in\fa_j}L$ and $\Sigma_{ij}$ are exactly
$s_j(a_1),\ldots,s_j(a_\mu)$. And by construction, this intersection
is transversal. By chasing the orientations, we see that the
contributions to the two sides of the identity are equal.
\end{proof}

\begin{rmk}
Since $\cup_il_i$ is contractible,
$\mathrm{H}_n(X_{t_i+\epsilon},\Z)$ is naturally identified with
$H_n(X,\Z)$ via $(l_i)_*$. We can similarly identify
$\mathcal{S}_{t_i+\epsilon}$ and $S_C=\mathcal{S}_0$. Hence we can
view $[\sigma_{ij}]$ as elements in $V_{n-2}(S_C,\Z)$ and
$[\Sigma_i]$ as elements in $V_n(X,\Z)$. Under this identification,
the above proposition still holds true.
\end{rmk}

Now we are ready to prove the main result of this section. The proof
follows that of \cite[Proposition 4]{shimada}.
\begin{proof} (of Theorem \ref{surjectivity on primitive}). By
Lefschetz theory, $V_{n}(X,\Z)$ is generated by vanishing cycles,
see \cite{lamotke}. Since $\Psi=\Psi_C$ commutes with the
specialization map, we know that $\Psi([\sigma_{i1}])=\lambda
[\Sigma_i]$ for some $\lambda\in\Z$. Now we see that
$$
[\sigma_{i1}]\cdot([\sigma_{i1}]+\cdots+[\sigma_{il}])=[\sigma_{i1}]\cdot[\sigma_{i1}]=\pm
2
$$
See \cite{lamotke}, p.40. By the above proposition, we get
$$
\Psi([\sigma_{i1}])\cdot[\Sigma_i]=\lambda[\Sigma_i]\cdot[\Sigma_i]=\pm
2
$$
Comparing this with $[\Sigma_i]\cdot[\Sigma_i]=\pm 2$, we get
$\lambda=\pm 1$. This shows that $[\Sigma_i]$ is in the image of
$\Psi$. This proves the theorem since $[\Sigma_i]$ generates
$V_n(X,\Z)$.
\end{proof}

\begin{rmk}\label{rmk n equals 4}
To complete the picture, we still need to prove the surjectivity of
$$
\Psi_C:\mathrm{H}^{n-2}(S_C,\Z)^\circ\to\mathrm{H}^n(X,\Z)_{prim}
$$
for $n=4$. Note that under Poincar\'e duality,
$\mathrm{H}^{2}(S_C,\Z)^\circ\subset\mathrm{H}^{2}(S_C,\Z)$ consists
of all classes in $V_2(S_C,\Z)$ which is orthogonal to $g'$. Hence
we only need to show that $\sigma_{ij}\cdot g'=0$. But this is easy
to see from construction. In fact, we pick a general point $x\in
C_{t_i+\varepsilon}$, then $g'$ is represented by $F_x$ (all lines
through $x$). Then by construction, all lines through $x$ avoids
$\sigma_{ij}$ (this is essentially due to the fact that the surface
swept out by lines in through $x$ is $2h^2$; see Lemma \ref{multiple
of h lemma}). This means $\sigma_{ij}\cdot g'=0$.
\end{rmk}


\begin{thebibliography}{PT}
\bibitem[AK]{ak} A. Altman and S. Kleiman, \textit{Foundations of the theory of Fano
schemes}, Compositio Mathematica, Tome \textbf{34}, $n^\circ1$
(1977), p. 3--47.
\bibitem[BM]{bm} S. Bloch and J.P. Murre, \textit{On the Chow group of certain types of Fano
threefolds}, Compos. Math., \textbf{39}(1979), no.1, 47--105.
\bibitem[Cl]{clemens} H. Clemens, \textit{On the surjectivity of Abel-Jacobi
mappings}, Ann. Math., \textbf{117}(1983), 71--76.
\bibitem[CG]{cg} H. Clemens and P. Griffiths, \textit{The intermediate Jacobian of a cubic
threefold}, Ann. Math., \textbf{95}(1972), 281--356.
\bibitem[CS]{cs} I. Coskun and J. Starr, \textit{Rational curves on smooth cubic
hypersurfaces}, Int. Math. Res. Not., 2009, no.24, 4626--4641.
\bibitem [Fu]{fulton} W. Fulton, \textit{Intersection theory},
second edition, Springer, 1998.
\bibitem [Iz]{izadi} E. Izadi, \textit{A Prym construction for the cohomology of a cubic
hypersurface}, Proc. of L.M.S., \textbf{79}(1999), p.535--568.
\bibitem[Ko]{kollar} J. Koll\'ar, \textit{Rational curves on algebraic
varieties}, Springer-Verlag, 1996.
\bibitem[La]{lamotke} K. Lamotke, \textit{The topology of complex projective varieties after S.
Lefschetz}, Topology, \textbf{20}(1981), 15--51.
\bibitem[Le]{letizia} M. Letizia, \textit{The Abel-Jacobi mapping for the quartic
threefold}, Invent. Math., \textbf{75}(1984), 477--492.
\bibitem[Lew]{lewis} J. D. Lewis, \textit{The cylinder correspondence for hypersurfaces of degree $n$ in
$\PP^n$}, Amer. J. Math., \textbf{110}(1988), no. 1, 77--114.
\bibitem[Mu]{mumford} D. Mumford, \textit{Prym varieties I}, Contributions to
Analysis (A Collection of Papers Dedicated to Lipman Bers), Academic
Press, 1974, p.325--350.
\bibitem [Pa]{paranjape} K. H. Paranjape, \textit{Cohomological and cycle-theoretic connectivity}, Ann. Math., Vol. 139, No. 3 (May 1994), 641--660.
\bibitem [Ro]{roitman} A. Roitman,\textit{The torsion of the group of of 0-cycles modulo rational
equivalence}, Ann. Math. \textbf{111}(1980), 553--569.
\bibitem [Sh]{shen} M. Shen, \textit{On relations among 1-cycles on cubic
hypersurfaces}, J. Alg. Geom., to appear.
\bibitem[Shi]{shimada} I. Shimada, \textit{On the cylinder isomorphism associated to the family of lines on a
hypersurface}, J. Fac. Sci. Univ. Tokyo, Sect. IA, Math.,
\textbf{37}(1990), 703--719.
\bibitem[Tj]{Tjurin} A.N. Tjurin, \textit{Five lectures on three dimensional
varieties}, Russian Math. Surveys, \textbf{27}(1972), 1--53.
\bibitem [Vo]{voisin} C. Voisin, \textit{Th\'eor\`eme de Torelli pour les cubiques de
$\PP^5$}, Invent. Math. \textbf{86}(1986), 577--601.
\bibitem[We]{welters} G. E. Welters, \textit{Curves of twice the minimal class on principally polarized abelian
varieties}, Nederl. Akad. Wetensch. Indag. Math. \textbf{49}(1987),
no.1, 87--109.
\end{thebibliography}
\end{document}